\newcommand{\naturals}{{\mathbb N}}
\newcommand{\field}{{\mathbb F}}
\newcommand{\KK}{\mathcal{K}}
\newcommand{\MM}{\mathcal{M}}
\newcommand{\GG}{\mathcal{G}}
\newcommand{\TT}{\mathcal{T}}
\newcommand{\alt}{\mathcal{A}}
\newcommand{\sym}{\mathcal{S}}
\newcommand{\CC}{\mathcal{C}}
\newcommand{\GF}{\mathrm{GF}}
\title{Algebraic Properties of Generalized Rijndael-like Ciphers }
\abstract{We provide conditions under which the set of Rijndael-like functions considered as permutations of the state space and based on operations of the finite field $\GF (p^k)$ ($p\geq 2$) is not closed under functional composition. These conditions  justify using a sequential multiple encryption to strengthen the Advanced Encryption Standard (AES), a Rijndael cipher with specific block sizes. In \cite {SW}, R. Sparr and R. Wernsdorf  provided conditions under which the group generated by the Rijndael-like round functions based on operations of the finite field $\GF (2^k)$ is equal to the alternating group on the state space. In this paper we provide conditions under which the group generated by the Rijndael-like round functions based on operations of the finite field $\GF (p^k)$ ($p\geq 2$) is equal to the symmetric group or the alternating group on the state space.}
\keywords{Rijndael cipher, Finite fields, Symmetric groups, Group operation, Imprimitivity}
\begin{document}

\section{Introduction}

An $\mathcal {SP}$-network is an iterated block cipher. This means that a certain sequence of computations, constituting a {\it round}, is repeated a specified number of times. The computations in each round are defined as a composition of specific functions (substitutions and permutations) in a way that achieves Shannon's principle \cite {S} of confusion and diffusion. The {\it Rijndael} block cipher (\cite {DR}, \cite {DRB}) is an example of an $\mathcal {SP}$-network. Rijndael is a block cipher with both a variable block length and a variable key length. The versions for the block size of 128 bits and key length of 128, 192, and 256 bits were adopted by the NIST as the {\it Advanced Encryption Standard} (AES) \cite {NIST}.
Rijndael has a highly algebraic structure. The cipher round transformations are based on operations of the finite field $\GF(2^8)$.  While little research has been done about the structural and algebraic properties of Rijndael before it was adopted as a standard, there has been much research since. Several alternative representations of the AES have been proposed (see, e.g, \cite {BB}, \cite {CP} and \cite {LSWD}) and some group theoretic properties of the AES components have been discovered (see, e.g, \cite {CMRB}, \cite {MR}, \cite {SW} and \cite {We}).

A motivation for investigating the group theoretic structure of a block cipher is to identify and exclude undesirable properties . One such undesirable property is short cycles of the round functions when considered as permutations of the state space. Another undesirable property is non-trivial factor groups of the group generated by the round functions of the cipher. For example, in \cite {P} it was shown that if the group generated by the round functions of a block cipher is imprimitive then this might lead to the design of trapdoors. Some related results about the cycle structure of the AES round functions are given in \cite {LSWD} and \cite{We}.    

Knowing the order of the group generated by the round functions is also an important algebraic question about the security of the cipher, because of its connection to the Markov cipher approach to differential cryptanalysis. In \cite {HSW} it was shown that if the one-round functions of an $s$-round iterated cipher generate the alternating or the symmetric group, then for all corresponding Markov ciphers the chains of  differences are {\it irreducible} and {\it aperiodic}. This means that after sufficiently many rounds of the cipher all differences become equally probable which makes the cipher secure against a differential cryptanalysis attack. In \cite{We}, R. Wernsdorf showed that the round functions of Rijndael over $\GF(2^8)$ generate the alternating group. In \cite {SW}, R. Sparr and R. Wernsdorf  provided conditions under which the group generated by the Rijndael-like round functions which are based on operations on the finite field $\GF (2^k)$ is equal to the alternating group on the state space. Motivated by their work we embark on a formal study of the Rijndael-like functions to  determine the extent to which this and other results in \cite {We}  hold when we consider an arbitrary finite field.  In this paper we provide conditions under which the group generated by the Rijndael-like round functions which are based on operations on the finite field $\GF (p^k)$ ($p\geq 2$) is equal to the symmetric group or the alternating group on the state space. 

Since the adoption of AES as a standard many papers have been published on the cryptanalysis on this cryptosystem. Initially AES survived several cryptanalytic efforts.  The situation started to change in 2009 when \cite {BK} and \cite {BKN} presented a key recovery attack on the full versions of AES-256 and AES-192.  Since then there have been several other theoretical attacks on these versions of AES and AES-128 (see, e.g. \cite{BKR}) as well as  on reduced-round instances of  these versions of AES  (see, e.g. \cite {DKS}).  However, in \cite {BDKS} the authors presented  a key recovery attack on version of AES-256  with up to 10 rounds that is of practical complexity. 

Theoretical attacks against widely used crypto algorithms often get better over time. The crucial question is how far AES is from becoming practically insecure. One way of strengthening AES is through using sequential multiple encryption, as it has been done with DES (see,  \cite{Kaliski},  \cite{CW} and \cite {NIST1}).  If the set of Rijndael round functions is closed under functional composition, then multiple encryption would be equivalent to a single encryption, and so strengthening AES through multiple encryption would not be possible. Thus, it is important to know whether this set is closed under functional composition.  Also, it is important to know  how changing the underlying finite field in AES will impact this property.  In this paper we provide conditions under which the set of Rijndael-like functions considered as permutations of the state space and based on operations of the finite field $\GF (p^k)$ ($p\geq 2$) is not closed under functional composition. 

The idea of examining block ciphers using different binary operations  in their  underlying structure has already been considered. For example, E. Biham and A. Shamir \cite{BS} examined the security of DES against their differential attack when some of the exclusive-or operations in DES are replaced with addition modulo $2^n$. In \cite {PRS} the authors initiated a study of Luby-Rackoff ciphers when the bitwise exclusive-or operation in the underlying Feistel network is replaced by a binary operation in an arbitrary finite group. They showed that in certain cases these ciphers are completely secure against adaptive chosen plaintext and ciphertext attacks and has better time and space complexity if considered over $\GF(p)$ for $p>2$. Although, the study of  the $\mathcal {SP}$-network based ciphers over $\GF(2^r)$ has already been considered (see, e.g. \cite {We}) we are not aware of such study when the underlying operations are the field operations in  $\GF(p^r)$ for $p>2$. 

The paper is organized as follows. In Section 2 we give some background from the theory of permutation groups and finite fields as well as block ciphers. In Section 3 we introduce the generalized Rijndael-like ${\mathcal{SP}}$ network and provide conditions for the parity and the cycle structure of the round functions of such a network when considered as permutations on the state space. Furthermore, we show when the set of round functions in the generalized Rijndael-like $\mathcal{SP}$ network of $s$-rounds do not constitute a group under functional composition. In Section 4 we derive conditions for Rijndael-like round functions such that the group generated by these functions is equal to the alternating group or the symmetric group on the state space. In Section 5 we conclude the paper.

\section {Preliminaries}
\subsection {Iterated block ciphers}
A \emph{cryptosystem} is an ordered 4-tuple  $(\MM,\,\CC,\,\KK,\,T)$  where $\MM$, $\CC$, and $\KK$ are called the \emph {message}(\emph{state}) \emph {space}, the {\it ciphertext space}, and the {\it key space} respectively, and where $T: \MM\times\KK\rightarrow \CC$ is a transformation such that for each $k\in\KK$, the mapping $\epsilon_k:\MM\rightarrow\CC$, called an \emph{encryption transformation}, is invertible.
For any cryptosystem $\Pi=(\MM,\,\CC,\,\KK,\,T)$,  let $\TT_{\Pi}=\{\epsilon_k:k\in\KK\}$ be the set of all encryption transformations. In addition, for any transformation $\epsilon_k\in\TT_{\Pi}$, let ${\epsilon_k}^{-1}$ denote the inverse of $\epsilon_k$.  In a cryptosystem where $\MM=\CC$ the mapping $\epsilon_k$ is a permutation of $\MM$. We consider only cryptosystems for which $\MM = \CC$. The set of all permutations of the set $\MM$ is denoted by $\sym_{\MM}$. Under the operation of functional composition $\sym_{\MM}$ forms a group called \emph{the symmetric group} over $\MM$. 
The symbol $\GG=\langle\TT_{\Pi}\rangle$ denotes the subgroup of $\sym_{\MM}$ that is generated by the set $\TT_{\Pi}$. The group $\GG$ is known as the \emph {group generated by a cipher}. If $\TT_{\Pi} = \GG$, that is the set of permutations $\TT_{\Pi}$ forms a group, then we say the cipher is a group. As $\GG$ is finite by Theorem 3.3 from \cite{G} the cipher is a group if and only if its set of encryption transformations $\TT_{\Pi}$ is a closed under functional composition. For such a cipher, multiple encryption doesn't offer better security than single encryption.
Computing the group $\GG$ generated by a cipher is often difficult. Let $T[k]$ denote the round function of the cipher under the key $k\in \KK$, where $\KK$ denotes the set of all round keys. Let $\tau= \{T[k]\vert k \in \mathcal{K}\}$ be the set of all round functions. The round functions $T[k]$ are also permutations of the message space $\MM$ and it is often easier to compute the group $\GG_{\tau}=\langle\{ T[k]\vert k\in \mathcal{K}\}\rangle$ generated by these permutations. Suppose we have an $s$-round cipher with a key schedule $KS: \KK\rightarrow{\KK^s}$ so that any key $k\in\KK$ produces a set of subkeys $k_i\in \KK$, $1\leq i\leq s$. It is natural then to consider the following three groups relevant to the block cipher:
$$\GG_{\tau}=\langle T[k]\vert k\in\KK\rangle$$
$$\GG_{\tau}^s=\langle T[k_s]T[k_{s-1}]\cdots T[k_1]\vert k_i\in\KK\rangle$$
$$\GG=\langle T[k_s]T[k_{s-1}]\cdots T[k_1]\vert KS(k)=(k_1,k_2,\cdots,k_s)\rangle$$
Thus $\GG_{\tau}$ is the group generated by the round functions and $\GG_{\tau}^s$ is the group generated by the set of all compositions of $s$ (independently chosen) round functions. The group $\GG$ is the group generated by the set of all compositions of $s$ round functions using the key schedule $KS$. This group can also be regarded as the group $\langle\TT_{\Pi}\rangle$ generated by the cipher $\TT_{\Pi}$. It is obvious that $\GG$ is a subgroup of $\GG_{\tau}^s$ which is a subgroup of $\GG_{\tau}$. We will show that $\GG_{\tau}^s$ is in fact a normal subgroup of $\GG_{\tau}$. 

\begin{lemma}\label{NormalSubGrp}
For every $s \in \naturals$, $G_{\tau}^s$ is a normal subgroup of $G_{\tau}$.
\end{lemma}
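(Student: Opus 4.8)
The plan is to show that $\GG_\tau^s$ is closed under conjugation by every generator of $\GG_\tau$, which by a standard argument suffices for normality. First I would recall that $\GG_\tau^s$ is generated by the elements $g = T[k_s]T[k_{s-1}]\cdots T[k_1]$ with each $k_i \in \KK$, and that $\GG_\tau$ is generated by the single round functions $T[k]$, $k \in \KK$. Since a subgroup is normal as soon as it is stable under conjugation by a generating set of the ambient group, and since $\GG_\tau$ is generated by the $T[k]$ together with their inverses $T[k]^{-1}$, it is enough to check that $T[k]\, g\, T[k]^{-1} \in \GG_\tau^s$ and $T[k]^{-1} g\, T[k] \in \GG_\tau^s$ for every generator $g$ of $\GG_\tau^s$ and every $k \in \KK$.

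The key observation is the "telescoping" identity
\[
T[k]\,\bigl(T[k_s]T[k_{s-1}]\cdots T[k_1]\bigr)\,T[k]^{-1} = \bigl(T[k]\,T[k_s]\,T[k]^{-1}\bigr)\bigl(T[k]\,T[k_{s-1}]\,T[k]^{-1}\bigr)\cdots\bigl(T[k]\,T[k_1]\,T[k]^{-1}\bigr),
\]
where the inner factors $T[k]^{-1}T[k]$ cancel pairwise. This rewrites the conjugate as a product of $s$ conjugates of single round functions. So the crux of the argument reduces to the following claim: for every $k, k' \in \KK$, the conjugate $T[k]\,T[k']\,T[k]^{-1}$ again lies in the set $\tau$ of round functions, or at least in $\GG_\tau^s$. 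I expect this to be the main obstacle, and the place where one must actually use the structure of the generalized Rijndael-like round functions rather than pure group theory: one needs that conjugating a round function by another round function yields a round function (for a suitably adjusted subkey), which should follow from the affine/field-operation form of $T[k]$ established in the relevant section — typically $T[k] = \sigma \circ (\,\cdot\, \oplus k)$ for a fixed permutation $\sigma$ and key addition, so that $T[k]T[k']T[k]^{-1}$ has the same shape with a transformed key.

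Once that claim is in hand, the conclusion is immediate: $T[k]\,g\,T[k]^{-1}$ is a product of $s$ elements each of which is a round function, hence is an element of $\GG_\tau^s$ by definition; the same holds for conjugation by $T[k]^{-1}$ by applying the identity with the roles reversed. Therefore $\GG_\tau^s$ is invariant under conjugation by every generator of $\GG_\tau$, and hence is normal in $\GG_\tau$. Since $s \in \naturals$ was arbitrary, the lemma follows. If it turns out that conjugating a round function by a round function does not land back in $\tau$ but only in $\GG_\tau$, I would instead argue more coarsely: conjugation is an automorphism of $\GG_\tau$ that fixes the generating set $\tau$ setwise up to... — but I expect the clean statement to hold, so the affine-form computation is the step to nail down carefully.
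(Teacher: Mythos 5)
Your telescoping identity is correct, but the claim you reduce everything to --- that $T[k]\,T[k']\,T[k]^{-1}$ is again a round function, or failing that at least lies in $\GG_{\tau}^s$ --- is false for these ciphers, and this is a genuine gap. In the paper a round function is $T[k]=\sigma[k]\circ\rho\circ\pi\circ\lambda$, with the key added \emph{after} a fixed nonlinear map; one computes $T[k']\circ T[k]^{-1}=\sigma[k'-k]$, hence $T[k]\,T[k']\,T[k]^{-1}=\sigma[k]\circ\rho\circ\pi\circ\lambda\circ\sigma[k'-k]$. For this to equal some $T[k'']=\sigma[k'']\circ\rho\circ\pi\circ\lambda$ one would need $\lambda\circ\sigma[c]=\sigma[c'']\circ\lambda$ with $c=k'-k\neq 0$, i.e.\ the S-box layer would have to commute with key addition up to a translation; it does not, precisely because $\lambda$ is built from field inversion and is not affine (the same obstruction appears in the convention $T[k]=\sigma\circ(\cdot + k)$ you hypothesize). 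The weaker fallback also fails: conjugation preserves cycle structure and hence parity, so in the cases the paper explicitly allows where the round functions are odd permutations (e.g.\ $n=1$, $m=2$) and $s$ is even, every generator of $\GG_{\tau}^s$ is a product of $s$ odd permutations and is even, so $\GG_{\tau}^s$ lies in the alternating group on the state space, while $T[k]T[k']T[k]^{-1}$ is odd and therefore cannot lie in $\GG_{\tau}^s$. Your closing sentence, which would have to carry the argument in exactly this situation, trails off without an argument, so the proof is incomplete.

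The paper's proof needs no structural facts about $T[k]$ at all: it pads the conjugate with $s-1$ cancelling pairs $T_k\circ T_k^{-1}$ and regroups, $T_k(T_s\cdots T_1)T_k^{-1}=(T_kT_s\cdots T_2)\,(T_1T_k^{\,s-1})\,(T_k^{\,s})^{-1}$, a product of two $s$-fold compositions of round functions and the inverse of a third, each of which lies in the group $\GG_{\tau}^s$ by definition. Your telescoping can in fact be repaired in the same spirit: the product of your $s$ conjugates equals $T[k]T[k_s]\cdots T[k_2]\cdot\sigma[k_1-k]$, and $\sigma[k_1-k]=T[k_1]T[k]^{\,s-1}\,(T[k]^{\,s})^{-1}\in\GG_{\tau}^s$; but at that point you are reproducing the paper's padding trick rather than avoiding it, which shows that per-factor membership in $\tau$ was never the right intermediate goal.
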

\begin{proof}
Let $T_k \in G_{\tau}$ and $T_s \circ \cdots \circ T_1 \in G_{\tau}^s$.  We see that
\begin{eqnarray*}
T_k \circ (T_s \circ \cdots \circ T_1) \circ T_k^{-1} & = & T_k \circ (T_s \circ \cdots \circ T_1) \circ (\underbrace{T_k \circ T_k \circ \cdots \circ T_k}_{s-1 \text{ copies}}) \circ\\
&    & \circ (\underbrace{T^{-1}_k \circ T^{-1}_k \circ \cdots \circ T^{-1}_k}_{s-1 \text{ copies}}) \circ T_k^{-1} \\
& = &( T_k \circ T_s \circ \cdots \circ T_2) \circ (T_1 \circ \underbrace{T_k \circ T_k \circ \cdots \circ T_k}_{s-1 \text{ copies}}) \\
&   & \circ (\underbrace{T_k \circ T_k \circ \cdots \circ T_k}_{s \text{ copies}})^{-1} \text{.}
\end{eqnarray*}
It follows that $T_k \circ (T_s \circ \cdots \circ T_1) \circ T_k^{-1} \in G_{\tau}^s$.  This completes the proof.
\end{proof}
Thus the group $G_{\tau}$ generated by the round functions is an upper bound for the group generated by the cipher.

\subsection {Group theoretical background}

In this section we present some background from the theory of permutation groups and finite fields which are used in this paper.
\subsubsection{\bf Permutation groups} 

For a finite set $X$, let $\vert X\vert$ denote the number of elements of $X$. For any nonempty finite set $X$ with $\vert X\vert=n$, the set of all bijective mappings of $X$ to itself is denoted by $\sym_n$ and is called the \emph{symmetric group} on $X$. A permutation $g\in \sym_n$ is a \emph {transposition} if $g$ interchanges two elements $x,y\in X$ and fixes all the other elements of $X\setminus \{x,y\}$. A permutation $g\in \sym_n$ is called an \emph {odd} (\emph{even})  permutation if $g$ can be represented as a composition of an odd (even) number of transpositions\footnote{Note that in this terminology a cycle of even length is an odd permutation, while a cycle of odd length is an even permutation.}.

The set of all even permutations is a group under functional composition and is called the \emph{alternating group} on $X$. The symbol $\alt_n$ denotes the alternating group on a set $X$ with $\vert X\vert=n$. The \emph{degree} of a permutation group $G$ over a finite set $X$ is the number of elements in $X$ that are moved by at least one permutation $g\in G$.
\begin{theorem} \label{simple}  
For $n\geq 5$, the alternating group $\alt_n$ is a simple group.
\end{theorem}

For any subgroup $G\leq\sym_n$, for any $x\in X$, the set $orb_G(x)=\{\phi(x):\phi\in G\}$ is called the \emph{orbit} of $x$ under $G$. The set $stab_G(x)=\{\phi\in G:\phi (x)=x\}$ is called the \emph{stabilizer} of $x$ in $G$. We will make use of the following well-known theorem, often called the Orbit-Stabilizer Theorem.

\begin{theorem}
Let $G$ be a finite group of permutations of a set $X$. Then for any $x\in X$, 
\[
\vert G\vert=\vert orb_G(x)\vert\cdot\vert stab_G(x)\vert
\]
\end{theorem}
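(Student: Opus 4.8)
The plan is to prove this by setting up an explicit bijection between the orbit $orb_G(x)$ and the collection of left cosets of the stabilizer in $G$, and then combining it with Lagrange's theorem. First I would record that $H:=stab_G(x)$ is a subgroup of $G$: the identity permutation fixes $x$; if $\phi,\psi$ fix $x$ then $(\phi\circ\psi)(x)=\phi(\psi(x))=\phi(x)=x$; and if $\phi(x)=x$ then applying $\phi^{-1}$ to both sides gives $\phi^{-1}(x)=x$. Since $G$ is finite, $H$, the coset space $G/H$, and $orb_G(x)$ are all finite sets, so every cardinality appearing in the statement makes sense.

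The core of the argument is the map $\Theta\colon G/H\to orb_G(x)$ defined by $\Theta(\phi H)=\phi(x)$. I would establish that $\Theta$ is simultaneously well defined and injective by means of the chain of equivalences
\[
\phi H=\psi H \iff \psi^{-1}\circ\phi\in H \iff (\psi^{-1}\circ\phi)(x)=x \iff \phi(x)=\psi(x).
\]
Surjectivity of $\Theta$ onto $orb_G(x)$ is immediate from the definition $orb_G(x)=\{\phi(x):\phi\in G\}$. Hence $\Theta$ is a bijection and the index satisfies $[G:H]=\vert orb_G(x)\vert$.

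To finish, I would invoke Lagrange's theorem in the form $\vert G\vert=[G:H]\cdot\vert H\vert$ and substitute $[G:H]=\vert orb_G(x)\vert$ and $\vert H\vert=\vert stab_G(x)\vert$. If one prefers not to quote Lagrange, the same conclusion follows by partitioning $G$ directly into the fibers $G_y=\{g\in G:g(x)=y\}$ indexed by $y\in orb_G(x)$; each nonempty fiber is a left coset of $H$ (if $g(x)=y=h(x)$ then $h^{-1}\circ g\in H$, so $g\in hH$), hence has exactly $\vert H\vert$ elements, so $\vert G\vert$ equals the number of fibers times $\vert H\vert$, that is, $\vert orb_G(x)\vert\cdot\vert stab_G(x)\vert$.

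There is no serious obstacle here; this is a classical fact, and the only point requiring care is the bookkeeping of composition order together with the left–right distinction for cosets. Because the permutations in $G$ act on the left of $X$, it is the left cosets of $H$ that correspond to orbit elements, and the well-definedness of $\Theta$ hinges precisely on this choice; the rest is routine verification.
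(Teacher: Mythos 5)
Your proof is correct: the coset--bijection argument (well-definedness and injectivity of $\phi H\mapsto\phi(x)$ via $\phi H=\psi H\iff\phi(x)=\psi(x)$, surjectivity by definition of the orbit, then Lagrange's theorem, with the fiber-counting variant as an alternative) is the standard and complete proof of the Orbit-Stabilizer Theorem. The paper itself offers no proof to compare against --- it simply quotes this as a well-known result before applying it in the analysis of the ShiftRows-like permutation --- so your write-up fills in exactly the classical argument the authors take for granted, and it does so without gaps.
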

Let $l, n$ denote natural numbers such that $0<l\leq n$. A group $G\leq \sym_n$ is called \emph {$l$-transitive} if, for any pair $(a_1,a_2,\ldots,a_l)$ and $(b_1,b_2,\ldots,b_l)$ with $a_i\neq a_j$, $b_i\neq b_j$ for $i\neq j$, there is a permutation $g\in G$ with $g(a_i)=b_i$ for all $i\in \{1,2,\ldots,l\}$. A $1$-transitive permutation group is called \emph {transitive}. 

A subset $B\subseteq X$ is called a \emph {block} of $G$ if for each $g\in G$ either $g(B)=B$ or $g(B)\cap B =\emptyset$. 
A block $B$ is said to be \emph {trivial} if $B\in\{\emptyset,X\}$ or $B=\{x\}$ where $x\in X$. The group $G\leq \sym_n$ is called \emph{imprimitive} if there is a non-trivial block $B\subseteq X$ of $G$; otherwise $G$ is called \emph{primitive}. 
 
We use the following result from \cite{AW} which provides sufficient conditions for a permutation group to be the alternating or the symmetric group.
\begin{lemma} \label{Rodgerslemma}
Suppose $G$ is a primitive permutation group of degree $n$ on a finite set $X$. If $G$ contains a cycle of length $m$ with $2 \leq m \leq (n-m)!$, then $G$ is the alternating or the symmetric group on $X$.
\end{lemma}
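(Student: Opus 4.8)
The plan is to prove the lemma by induction on the cycle length $m$. Write $\sigma \in G$ for the given $m$-cycle, let $\Delta$ be its support (so $|\Delta| = m$), and put $\Gamma = X \setminus \Delta$, $k = |\Gamma| = n - m$. First I would record the elementary consequences of the hypothesis: since $m \ge 2$ and $m \le k!$, we get $k \ge 2$, hence $n \ge 4$, and if $k = 2$ then $m = 2$ and $n = 4$. What makes an induction on $m$ possible is that the hypothesis is automatically inherited by any \emph{shorter} cycle of the same degree: if $2 \le m' < m$ and $G$ contains an $m'$-cycle, then $m' < m \le (n-m)! < (n-m')!$. So in the inductive step it suffices to exhibit in $G$ a single cycle of length $\ell$ with $2 \le \ell < m$.

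For the base and the ``easy'' values of $m$ I would invoke the classical theorems of Jordan. If $m = 2$: a primitive group of degree $n \ge 5$ containing a transposition equals $\sym_n$, and for $n = 4$ the only primitive subgroup of $\sym_4$ containing a transposition is $\sym_4$ itself. If $m = 3$: the hypothesis forces $n \ge 6$, and a primitive group of degree at least $5$ containing a $3$-cycle contains the alternating group. If $m$ is an odd prime: the hypothesis forces $k \ge 3$, i.e.\ $m \le n-3$, so $\sigma$ is a cycle of prime length fixing at least three points and Jordan's theorem again gives $G \supseteq \alt_n$. Hence one may assume from now on that $m \ge 4$ is composite.

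For the inductive step I would first use primitivity to produce a second $m$-cycle overlapping $\sigma$: since $2 \le |\Delta| < n$, the set $\Delta$ is not a block of $G$, so there is $g \in G$ with $\emptyset \ne \Delta \cap g\Delta \subsetneq \Delta$, and $\tau := g \sigma g^{-1}$ is an $m$-cycle whose support meets $\Delta$ in a proper nonempty subset. The goal is then to extract from $\sigma$, $\tau$, and possibly further conjugates a cycle of length $< m$, after which the induction hypothesis applies at the same degree $n$ and yields $G \supseteq \alt_n$. The model case is transparent: if $m = 2b$ is even and the overlap $\Delta \cap g\Delta$ can be taken to be a single point $z$, then $\sigma^{b}$ and $\tau^{b}$ are products of disjoint transpositions that share only the point $z$ (lying in one transposition of each), and a short computation shows $[\sigma^{b},\tau^{b}]$ is a $3$-cycle. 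In general, though, one must first arrange an overlap of the right shape, or else grow $\Delta$ by iterated overlapping translates in the style of Jordan-set theory (Jordan, Marggraf, Manning); and it is precisely here that the quantitative bound $m \le (n-m)!$ is used: it ties the cycle length to the number of its fixed points, which is what rules out the exceptional primitive groups that contain cycles --- subgroups of $\mathrm{P\Gamma L}_2(q)$, affine groups of prime degree, the Mathieu groups --- in every one of which a cycle fixes too few points for the reduction to run.

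The hard part will be exactly this inductive step for composite $m$. Primitivity alone gives very little control over which conjugates of $\sigma$ lie in $G$, so forcing an overlap of the required type --- hence producing a strictly shorter cycle --- is the crux, and it is where the hypothesis $m \le (n-m)!$ must be brought to bear. An alternative that bypasses the step altogether is to appeal to the full classification of primitive permutation groups containing a cycle and simply verify that every exceptional group fails the inequality $m \le (n-m)!$.
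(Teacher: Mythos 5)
The paper itself gives no proof of this lemma: it is quoted directly from Williamson \cite{AW}, so your task is in effect to reprove Williamson's theorem, and your proposal does not yet do that. The preparatory parts are correct: the numerical consequences of $2\le m\le (n-m)!$, the observation that the hypothesis is inherited by any shorter cycle at the same degree, the base cases via Jordan's classical theorems (a primitive group containing a transposition is $\sym_n$; one containing a $3$-cycle, or a cycle of prime length fixing at least three points, contains $\alt_n$), and the commutator computation giving a $3$-cycle when two conjugate $2b$-cycles overlap in exactly one point. But the entire content of the lemma is the inductive step for composite $m$, and there you only state the goal. Primitivity does give a conjugate $\tau=g\sigma g^{-1}$ whose support meets $\Delta$ in a proper nonempty subset, but with no control whatsoever on the size or shape of that overlap, and you never show that $\langle\sigma,\tau,\dots\rangle\le G$ must contain a cycle of length strictly less than $m$, nor do you exhibit any mechanism by which the bound $m\le(n-m)!$ enters the argument --- you only assert that it must be brought to bear. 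Producing a shorter cycle (or a Jordan set) from an arbitrary overlap of two $m$-cycles is precisely the difficulty the quantitative hypothesis exists to overcome, so as written the proposal is a plan whose crux is missing, not a proof.

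The fallback you mention --- appealing to the classification of primitive groups containing a cycle --- would indeed work and is short: the hypothesis forces $n-m\ge 2$; if $n-m=2$ then $m=2$ and the transposition case gives $\sym_n$, while if $n-m\ge 3$ the cycle fixes at least three points and the CFSG-based theorem of Jones (every primitive group containing a cycle with at least three fixed points contains $\alt_n$) finishes. But you do not carry out even this verification, and it replaces an elementary 1973 argument by machinery resting on the classification of finite simple groups. Either route has to be executed in detail for the proposal to count as a proof of the lemma.
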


\subsubsection{\bf Finite fields}
A structure $(\mathbb {F},+,\cdot)$ is a \emph {field} if and only if both $(\mathbb {F},+)$ is an Abelian group with identity element $0_G$ and $({\mathbb F}\setminus\{0_G\},\cdot)$ is an Abelian groups and the law of distributivity of $\cdot$ over $+$ applies. If the number of elements in $\mathbb F$ is finite, $\mathbb F$ is called a \emph {finite field}; otherwise it is called an \emph {infinite field}.
\begin{definition}
Suppose $\mathbb{F}$ and $\mathbb K $ are fields. If  $\mathbb{F}\subseteq \mathbb K$, then $\mathbb{F}$ is called a {\emph subfield} of $\mathbb K$, or equivalently $\mathbb{K}$ is called an {\emph extension field} of $\mathbb{F}$.
\end{definition}
We can view $\mathbb K$ as a vector space over $\mathbb{F}$ if we define the scalar multiplication as follows
\[
\mathbb{F}\times \mathbb {K} \rightarrow \mathbb {K}
\]
\[
(a,\alpha)\mapsto a\alpha
\]
Suppose the extension field ${\mathbb K}$ of ${\mathbb F}$ is a finite dimensional vector space over ${\mathbb F}$. Let $d=dim_{\mathbb F}(\mathbb K)$ be the dimension of the vector space ${\mathbb K}$ over the field ${\mathbb F}$, and let $\{\alpha_1, \alpha_2,\cdots,\alpha_d\}$ be a basis of the vector space $\mathbb K$ over $\mathbb F$. Then any element $\beta\in \mathbb K$ can be expressed uniquely as a linear combination of $\alpha_1, \alpha_2,\cdots,\alpha_d$ with coefficients in $\mathbb F$

\[
\beta=a_1\alpha_1+a_2\alpha_2+\cdots+a_d\alpha_d
\]
where $a_1,a_2,\cdots, a_d\in \mathbb F$.

In field theory the dimension $d$ of the vector space $\mathbb K$ over $\mathbb F$ is called the \emph{degree} of extension.

It is known that every finite field has order $p^n$ for some prime number $p$ and some positive integer $n$. Such a field is called  a \emph {Galois field} of order $p^n$ and is denoted by $\GF(p^n)$. The following classical fact from the theory of finite fields (see \cite {G}) will be used.
\begin{theorem}\label{fieldorderthm}
$\GF(p^{n_1})\subseteq \GF(p^{n_2})$ if and only if $n_1$ divides $n_2$.
\end{theorem}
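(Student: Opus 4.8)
The plan is to prove the two implications separately, working inside a fixed algebraic closure $\overline{\GF(p)}$ of the prime field and using the standard description of $\GF(p^n)$ as the set of roots of $x^{p^{n}}-x$ in $\overline{\GF(p)}$ --- equivalently, as the splitting field of that polynomial over $\GF(p)$.

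For the direction ``$\GF(p^{n_1})\subseteq\GF(p^{n_2})\ \Rightarrow\ n_1\mid n_2$'', I would use the vector-space point of view already set up in the Preliminaries. The field axioms guarantee that $\GF(p^{n_2})$, with scalar multiplication given by the ambient field product, is a vector space over its subfield $\GF(p^{n_1})$, and it is finite-dimensional since it is finite. Writing $d=dim_{\GF(p^{n_1})}(\GF(p^{n_2}))$ and fixing a basis, every element of $\GF(p^{n_2})$ has a unique expansion as a $\GF(p^{n_1})$-linear combination of the $d$ basis vectors; counting these expansions yields $p^{n_2}=\bigl(p^{n_1}\bigr)^{d}$, hence $n_2=n_1 d$ and $n_1\mid n_2$.

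For the converse, suppose $n_1\mid n_2$, say $n_2=n_1 d$. Let $\alpha\in\GF(p^{n_1})$, so that $\alpha^{p^{n_1}}=\alpha$. Iterating the Frobenius map $x\mapsto x^{p^{n_1}}$ a total of $d$ times gives $\alpha^{p^{n_1 d}}=\alpha$, that is $\alpha^{p^{n_2}}=\alpha$; thus $\alpha$ is a root of $x^{p^{n_2}}-x$ and therefore $\alpha\in\GF(p^{n_2})$. As $\alpha$ was arbitrary, $\GF(p^{n_1})\subseteq\GF(p^{n_2})$. The one point I would be careful about --- and the only place a subtlety hides --- is the meaning of the containment itself: a finite field of a given order is determined only up to isomorphism, so the statement must be read with both fields realized inside the same $\overline{\GF(p)}$ (equivalently, as the existence of a field embedding $\GF(p^{n_1})\hookrightarrow\GF(p^{n_2})$). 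With that convention in place the two arguments above close the proof, and I anticipate no genuine obstacle: this is exactly the classical subfield criterion for Galois fields.
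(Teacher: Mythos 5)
Your proof is correct, but there is nothing in the paper to compare it against: the paper does not prove Theorem \ref{fieldorderthm} at all, it simply quotes it as a classical fact with a citation to \cite{G}. Your two-direction argument is the standard textbook proof --- the vector-space/counting argument giving $p^{n_2}=(p^{n_1})^{d}$ for the forward implication, and the Frobenius iteration showing every element of $\GF(p^{n_1})$ is a root of $x^{p^{n_2}}-x$ for the converse --- and it is complete, including the appropriate care about realizing both fields inside a common algebraic closure (equivalently, reading the containment as the existence of an embedding), which is exactly the sense in which the paper uses the statement.
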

It is also known that a finite field ${\mathbb K}$ of order $p^{nd}$ can be constructed as a quotient ring $\frac{{\mathbb F}[x]}{\langle f(x)\rangle}$ where ${\mathbb F}[x]$ is the polynomial ring over the field ${\mathbb F}$ of order $p^n$ and $f(x)\in {\mathbb F}[x]$ is an irreducible polynomial of degree $d$ over ${\mathbb F}$. The field ${\mathbb K}$ is an extension field of degree $d$ of ${\mathbb F}$ i.e., a vector space of dimension $d$ over ${\mathbb F}$. The equivalence classes modulo $f(x)$ in $\frac{{\mathbb F}[x]}{\langle f(x)\rangle}$ of the polynomials $1,x,x^2,\cdots, x^{d-1}$ over ${\mathbb F}$ form a basis of ${\mathbb K}$ viewed as a vector space over the field ${\mathbb F}$.  Thus, using $x^i$ as representative for the equivalence class of $x^i$ modulo $f(x)$ (for $0\le i\le d-1$), the elements in $\mathbb K$ can be represented uniquely as 
\[
a_{d-1}x^{d-1}+ a_{d-2}x^{d-2}+\cdots+a_{2}x^{2}+ax+a_0 
\]
where $a_i\in {\mathbb F}$.
\begin{definition}
A \emph{quadratic field extension} of a field ${\mathbb K}$ is a field extension of degree $2$.
\end{definition}
In the case where a quadratic extension ${\mathbb K}$ arises as the quotient ring $\frac{{\mathbb F}[x]}{\langle f(x)\rangle}$ for an irreducible polynomial $f(x)$ of the form $x^2-c$ with $c$ in ${\mathbb F}$, it is common to replace the equivalence class of $x$ modulo $f(x)$ with the symbol $\sqrt{c}$ when representing the elements of ${\mathbb K}$ as linear combinations of basis elements of the vector space ${\mathbb K}$ over the field ${\mathbb F}$. In this notation, elements of ${\mathbb K}$ are written as $a_0+a_1\sqrt{c}$, where $a_0,a_1 \in \mathbb F$ and ${\mathbb K}$ is usually denoted by $\mathbb F(\sqrt{c})$.

We consider the following function on finite fields.
\begin{definition}
Let $\field$ be a finite field of order $q$ and $\mathbb K$ be an extension field of $\field$ of degree $d$. The \emph{trace} function on $\mathbb K$ with respect to $\field$ is the function $Tr:\mathbb K\rightarrow \mathbb F$ defined by  
\[ 
\textup{Tr}(a) = a + a^q + a^{q^2} + \cdots + a^{q^{d-1}}.
\]
\end{definition}
For any subset $S$ of a field $E$ write $S^{-1}$ for the set $\{s^{-1}\vert 0\neq s\in S\}$. The set $S$ is called  an \emph{inverse-closed} if $S^{-1}\subseteq S$. The inversion map in finite fields is of cryptographic interest, especially when we study the algebraic structure of the ciphers which are based on substitution-permutation networks. 
The following theorem is a result by S. Mattarei  in \cite{Mattarei}.
\begin{theorem} \label{mattareithm}
Let $A$ be a non-trivial inverse-closed additive subgroup of the finite field $E=\GF(p^n)$. Then either $A$ is a subfield of $E$ or else $A$ is the set of elements of trace zero in some quadratic field extension contained in $E$.
\end{theorem}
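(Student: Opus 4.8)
The plan is to reduce the theorem to the special case $1\in A$ and to settle that case using Hua's identity. First I would prove a fact valid for \emph{any} inverse-closed additive subgroup $A$ of a field: $a^{2}b\in A$ for all $a,b\in A$. When $a,b$ are nonzero and $ab\neq 1$, the commutative form of Hua's identity reads
\[
a^{2}b \;=\; a-\bigl(a\iv+(b\iv-a)\iv\bigr)\iv ,
\]
which I would verify directly, noting along the way that $a\iv+(b\iv-a)\iv=(a-a^{2}b)\iv$. Every expression on the right lies in $A$: the terms $a\iv$ and $b\iv$ because $A$ is inverse-closed, the sums and differences because $A$ is additively closed, and the two outer inversions because their arguments $b\iv-a$ and $a-a^{2}b=a(1-ab)$ are nonzero. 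The degenerate cases $ab=1$ (where $a^{2}b=a$) and $a=0$ or $b=0$ are immediate. Fixing a nonzero $a\in A$ — one exists since $A$ is non-trivial — and varying $b$ over $A$ yields $a^{2}A\subseteq A$.

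Next I would use this to pass to the case $1\in A$. Set $B:=a\iv A$; then $B$ is an additive subgroup, $1=a\iv a\in B$, and $B$ is inverse-closed because for nonzero $x\in A$ we have $(a\iv x)\iv=ax\iv=a\iv(a^{2}x\iv)$ with $a^{2}x\iv\in a^{2}A\subseteq A$. So it suffices to show that an inverse-closed additive subgroup containing $1$ is a subfield; call it $F$. Then $A=aF$, and $a^{2}A\subseteq A$ forces $a^{2}F\subseteq F$, hence $a^{2}\in F$. If $a\in F$ then $A=F$ is a subfield. If $a\notin F$ — which, because the Frobenius of a finite field is bijective, forces the characteristic to be odd — then $X^{2}-a^{2}\in F[X]$ is the minimal polynomial of $a$, so $F(a)\subseteq E$ is a quadratic extension whose nontrivial $F$-automorphism sends $a$ to $-a$; consequently $\mathrm{Tr}_{F(a)/F}(f_{0}+f_{1}a)=2f_{0}$, and $A=Fa$ is exactly the trace-zero subspace of $F(a)$ over $F$ (comparing $F$-dimensions shows it is all of it).

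For the reduced statement I would again invoke the first fact, now for $B$: $x^{2}b\in B$ for all $x,b\in B$, so $x^{2}\in B$ (take $b=1$) and $x^{2}B\subseteq B$. If the characteristic $p$ is odd, then for $x,x'\in B$ the element $(x+x')^{2}b-x^{2}b-x'^{2}b=2xx'b$ lies in the additive group $B$, and taking the appropriate integer multiple gives $xx'b\in B$; with $b=1$ this is $BB\subseteq B$. If $p=2$, then $x\mapsto x^{2}$ is the Frobenius $\sigma$; $\sigma(B)\subseteq B$ and finiteness give $\sigma(B)=B$, so $BB=\{\sigma(x)b:x,b\in B\}\subseteq B$. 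In either case $B$ is closed under addition, subtraction and multiplication and contains $1$, hence is a subfield of $E$.

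The genuinely nontrivial step is the first one: recognizing that Hua's identity is the right instrument and — more to the point — that all of its auxiliary expressions remain inside $A$ \emph{without} assuming $1\in A$. Everything afterwards is bookkeeping: the substitution $A\mapsto a\iv A$ reduces to an easy configuration, and the final dichotomy (subfield versus trace-zero subspace of a quadratic extension, degenerating in characteristic $2$) is routine finite-field theory.
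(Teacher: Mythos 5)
You should know that the paper does not prove this statement at all: it is quoted as Mattarei's theorem and used as a black box, with a bare citation to \cite{Mattarei} (only its consequences, Lemma \ref{qfelemma} and Theorem \ref{invclosedaddsubgpthm}, are proved in the text). Your argument is therefore a genuinely different, self-contained route, and as far as I can check it is correct: the commutative Hua identity $a-\bigl(a^{-1}+(b^{-1}-a)^{-1}\bigr)^{-1}=a^{2}b$ is valid whenever $a,b\neq 0$ and $ab\neq 1$, every auxiliary expression stays in $A$ (the inner sum equals $(a-a^{2}b)^{-1}\neq 0$, so the outer inversion is legitimate, and the degenerate cases are trivial), which gives $a^{2}A\subseteq A$; this is exactly what makes $B=a^{-1}A$ inverse-closed, so the reduction to $1\in B$ is sound; the reduced case correctly splits into odd characteristic (polarizing $x\mapsto x^{2}b$ and using that $2$ is invertible modulo $p$ in the additive group $B$) and characteristic $2$ (Frobenius is a bijection of the finite set $B$), yielding that $B$ is a subfield; and the endgame is right, since $a^{2}\in F$ with $a\notin F$ forces $p$ odd (again by bijectivity of Frobenius on $F$), $F(a)=F(\sqrt{a^{2}})\subseteq E$ is quadratic over $F$, and the trace-zero set of $F(a)$ over $F$ is precisely $Fa=A$. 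The comparison is then simple: the paper buys generality and brevity by citing Mattarei's theorem, which holds for arbitrary fields, whereas your proof leans on finiteness at several points (Frobenius bijectivity, a finite multiplicatively closed subring with $1$ being a subfield) but has the virtue of being short, elementary, and completely self-contained for the case $E=\GF(p^{n})$ that the paper actually uses.
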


\begin{lemma}\label{qfelemma}
The number of elements of trace zero in a quadratic field extension $\mathbb K(\sqrt{c})$ of a subfield $\mathbb K \subseteq \GF(p^n)$ is equal to $\vert \mathbb {K} \vert$.
\end{lemma}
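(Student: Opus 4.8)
The plan is to identify the trace-zero set as the kernel of a $\mathbb{K}$-linear map and then count it by the additive analogue of the Orbit--Stabilizer Theorem. Write $q = |\mathbb{K}|$ and $\mathbb{L} = \mathbb{K}(\sqrt{c})$, so $\mathbb{L}$ has degree $2$ over $\mathbb{K}$ and hence $|\mathbb{L}| = q^2$. By definition the trace of $\mathbb{L}$ with respect to $\mathbb{K}$ is $\textup{Tr}(a) = a + a^q$, and the quantity we want is $|N|$, where $N = \{a \in \mathbb{L} : a + a^q = 0\}$.

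First I would verify that $\textup{Tr}\colon \mathbb{L}\to\mathbb{K}$ is $\mathbb{K}$-linear: additivity is the additivity of the Frobenius map $x\mapsto x^q$, and for $\lambda\in\mathbb{K}$ we have $\textup{Tr}(\lambda a) = \lambda a + \lambda^q a^q = \lambda\,\textup{Tr}(a)$ since $\lambda^q=\lambda$ for every $\lambda\in\mathbb{K}$. Thus $N=\ker(\textup{Tr})$ is a $\mathbb{K}$-subspace of $\mathbb{L}$.

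The one genuinely substantive step is showing that $\textup{Tr}$ is surjective, equivalently not identically zero. Here the observation is that $a\mapsto a+a^q$ is the evaluation of a polynomial of degree $q$, so it has at most $q < q^2 = |\mathbb{L}|$ roots and therefore cannot vanish on all of $\mathbb{L}$; being a nonzero $\mathbb{K}$-linear image inside the one-dimensional $\mathbb{K}$-space $\mathbb{K}$, the image is all of $\mathbb{K}$. (Alternatively one can compute directly: writing $a=a_0+a_1\sqrt{c}$ and noting that irreducibility of $x^2-c$ forces $c$ to be a non-square in $\mathbb{K}$, whence $c^{(q-1)/2}=-1$ and $(\sqrt{c})^q=-\sqrt{c}$, one gets $\textup{Tr}(a)=2a_0$, which in odd characteristic vanishes precisely for the $q$ elements $a_1\sqrt{c}$ with $a_1\in\mathbb{K}$, finishing the count outright.)

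Finally, I would apply $|\mathbb{L}| = |\ker\textup{Tr}|\cdot|\textup{Tr}(\mathbb{L})|$ (rank--nullity over $\mathbb{K}$, equivalently a count of cosets of $\ker\textup{Tr}$ in $(\mathbb{L},+)$) to obtain $|N| = |\ker\textup{Tr}| = q^2/q = q = |\mathbb{K}|$. So the only real obstacle is the non-vanishing of the trace, and that is handled either by the degree count or, in the odd-characteristic case that the notation $\mathbb{K}(\sqrt{c})$ presupposes, by the explicit formula $\textup{Tr}(a)=2a_0$.
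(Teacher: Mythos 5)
Your argument is correct, but your primary route differs from the paper's. The paper's proof simply exhibits the trace-zero set as $\{a_1\sqrt{c} \mid a_1\in\mathbb{K}\}$ and counts its $|\mathbb{K}|$ members; the justification it leaves implicit is exactly the computation you place in your parenthetical: irreducibility of $x^2-c$ forces $c$ to be a non-square, so $(\sqrt{c})^q=-\sqrt{c}$ and $\textup{Tr}(a_0+a_1\sqrt{c})=2a_0$, which in odd characteristic vanishes precisely when $a_0=0$. Your main line instead views $\textup{Tr}\colon\mathbb{L}\to\mathbb{K}$ as a $\mathbb{K}$-linear map, shows it is not identically zero because $x+x^q$ has at most $q<q^2$ roots (hence surjective onto the one-dimensional $\mathbb{K}$-space $\mathbb{K}$), and counts the kernel by rank--nullity, $|\ker\textup{Tr}|=q^2/q=q$. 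What the kernel argument buys is independence of the explicit basis and of the characteristic: it applies verbatim to any quadratic extension, including the characteristic-two quadratic extensions (of Artin--Schreier type) that actually arise from Mattarei's theorem when $p=2$, where the notation $\mathbb{K}(\sqrt{c})$ and the paper's explicit description of the trace-zero set do not literally apply. What the paper's (and your alternative) computation buys is brevity and a completely explicit description of the trace-zero set, at the cost of tacitly assuming odd characteristic; your parenthetical supplies the verification the paper omits.
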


\begin{proof}
The set of elements of trace zero in $\mathbb K(\sqrt{c})$ is the set 
\[
\{a_0+a_1\sqrt{c} \; \vert \; a_0,a_1 \in \mathbb K, a_0=0 \}
\]
This set has $\vert {\mathbb K} \vert$ members. 
\end{proof}
\begin{theorem}\label{invclosedaddsubgpthm}
Any non-trivial inverse-closed additive subgroup $H$ of a finite field $\GF(p^n)$ has $p^k$ elements for some $k \vert n$.
\end{theorem}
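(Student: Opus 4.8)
The plan is to reduce the statement directly to Mattarei's classification (Theorem~\ref{mattareithm}) together with the divisibility criterion for subfields (Theorem~\ref{fieldorderthm}) and the counting Lemma~\ref{qfelemma}. Since $H$ is by hypothesis a non-trivial inverse-closed additive subgroup of $E=\GF(p^n)$, Theorem~\ref{mattareithm} applies and tells us that exactly one of two situations occurs: either $H$ is a subfield of $E$, or $H$ is the set of trace-zero elements of some quadratic field extension $\mathbb K(\sqrt{c})$ contained in $E$. I would simply treat these two cases in turn.

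In the first case, $H$ is itself a finite field contained in $\GF(p^n)$, hence $H=\GF(p^k)$ for some positive integer $k$, and Theorem~\ref{fieldorderthm} forces $k\mid n$; in particular $\vert H\vert = p^k$ with $k\mid n$, as required. In the second case, $H$ is the set of trace-zero elements of a quadratic extension $\mathbb K(\sqrt c)\subseteq E$. Here the key observation is that $\mathbb K$, being a field sitting inside $\GF(p^n)$, is a subfield of $\GF(p^n)$, so again by Theorem~\ref{fieldorderthm} we have $\vert\mathbb K\vert = p^k$ for some $k\mid n$. Lemma~\ref{qfelemma} then gives $\vert H\vert = \vert\mathbb K\vert = p^k$, completing this case and hence the proof.

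The argument is essentially a bookkeeping exercise once Theorem~\ref{mattareithm} is in hand, so there is no serious obstacle; the only point requiring a moment's care is the second case, where one must notice that the relevant power of $p$ dividing $n$ comes not from the quadratic extension $\mathbb K(\sqrt c)$ itself but from the smaller subfield $\mathbb K$, via the tower $\mathbb K\subseteq\mathbb K(\sqrt c)\subseteq\GF(p^n)$ and Lemma~\ref{qfelemma}. One should also record at the outset that the non-triviality hypothesis on $H$ is exactly what licenses the use of Theorem~\ref{mattareithm}.
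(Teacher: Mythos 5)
Your argument is correct and follows exactly the same route as the paper's proof: invoke Theorem~\ref{mattareithm} to split into the subfield case (handled by Theorem~\ref{fieldorderthm}) and the trace-zero case (handled by Theorem~\ref{fieldorderthm} applied to $\mathbb K$ together with Lemma~\ref{qfelemma}). Nothing further is needed.
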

\begin{proof}
By Theorem \ref{mattareithm}, there are two possibilities: $H$ is a subfield of $\GF(p^n)$, in which case the result follows immediately from Theorem \ref{fieldorderthm}; or $H$ is the set of elements of trace zero in a quadratic field extension $\mathbb K(\sqrt{c})$ of a subfield $\mathbb K \subseteq \GF(p^r)$.  In the latter case, by Theorem \ref{fieldorderthm} we have that $\vert \mathbb K \vert = p^k$ for some $k \vert n$, and Lemma \ref{qfelemma} yields $\vert H \vert = \vert \mathbb K \vert = p^k$.
\end{proof}

\section {Cycle structure of the generalized Rijndael-like round functions}
In this section we show properties of the cycle structure of the round functions of a Rijndael-like $\mathcal SP$-network considered over the field $\GF(p^r)$, which we call \emph {generalized  Rijndael-like functions}. The notation of the generalized Rijndael-like functions  and their component functions will be similar to the notation in \cite {SW}. One exception will be that the underlying field in the generalized Rijndael-like functions and their component functions is the finite field  $\GF(p^r)$ of characteristic $p\geq 2$ instead of $\GF(2^r)$.

Let $m,\,n,\,r $ be positive integers. The symbol $M_{m,n}(\GF(p^r))$ denotes the set of all $m\times n$ - matrices over $\GF(p^r)$. The elements of $\GF(p^r)^{mn}$ are defined as matrices $b\in M_{m,n}(\GF(p^r))$ with the mapping $t:\GF(p^r)^{mn}\rightarrow M_{m,n}(\GF(p^r))$, where $t(a)=b$ is defined by $b_{ij}=a_{ni+j}$, for $0\leq i<m, 0\leq j<n$.   First we start with the analysis of the cycle structure of the component functions in the generalized Rijndael-like function.
\vspace{0.2in}

\subsection {{\bf Analysis of the AddRoundKey-like  function} ($\sigma\left[ k\right]$-function) } 

\begin{definition}
Let $\sigma\left[ k\right]: M_{m,n}(\GF(p^r)) \rightarrow M_{m,n}(\GF(p^r))$ denote the mapping defined by $\sigma\left[ k\right](a)=b$ if and only if $b_{ij}=a_{ij}+k_{ij}$ and $k\in M_{m,n}(\GF(p^r))$ for all $0\leq i <m$, $0\leq j <n$. 
\end{definition}
\begin{lemma}\label{ARKlemma}
Let  $k\in M_{m,n}(\GF(p^r))$ be given. 
\begin{enumerate}
\item  If $p>2$ then $\sigma\left[ k\right]$ is always an even permutation.
\item  If $p=2$ then $\sigma\left[ k\right]$ is an even permutation  if and only if  $r\cdot m\cdot n>1$.
\end{enumerate}
\end{lemma}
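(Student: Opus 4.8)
The approach is to identify $\sigma[k]$ with the translation map $v\mapsto v+k$ on the finite abelian group $V:=\GF(p^r)^{mn}$ (which we identify with $M_{m,n}(\GF(p^r))$ via the bijection $t$), and to read off its cycle structure from the cosets of the cyclic subgroup generated by $k$.

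\textbf{Setting up the cycle type.} First I would observe that $\sigma[k]$ is a permutation of $V$, with two-sided inverse $\sigma[-k]$. Since $V$ is a vector space of dimension $rmn$ over $\GF(p)$, its additive group is elementary abelian of order $p^{rmn}$: every nonzero element has additive order exactly $p$. Let $d$ denote the order of $k$ in $(V,+)$, so $d=1$ when $k=0$ and $d=p$ when $k\neq 0$. The cosets of $\langle k\rangle$ partition $V$ into $p^{rmn}/d$ blocks, each of size $d$, and on a coset $v+\langle k\rangle$ the map $\sigma[k]$ cycles $v\mapsto v+k\mapsto\cdots\mapsto v+(d-1)k\mapsto v$, i.e.\ acts as a single $d$-cycle. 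Hence $\sigma[k]$ is a product of exactly $p^{rmn}/d$ disjoint $d$-cycles, so (using the convention recorded in the paper's footnote that a $d$-cycle has sign $(-1)^{d-1}$) its sign equals $(-1)^{(d-1)p^{rmn}/d}$.

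\textbf{The two cases.} For (1), if $p>2$ then $d\in\{1,p\}$ is odd, so each $d$-cycle is an even permutation and $\sigma[k]$, being a product of even permutations, is even. For (2), if $k=0$ then $\sigma[k]$ is the identity and hence even; if $k\neq 0$ then $d=2$, so $\sigma[k]$ is a product of $2^{rmn-1}$ transpositions with sign $(-1)^{2^{rmn-1}}$, and this exponent is even precisely when $rmn-1\geq 1$. Thus for $k\neq 0$ the map $\sigma[k]$ is even iff $rmn>1$, while when $rmn=1$ it is the single transposition interchanging the two elements of $\GF(2)$, which is odd.

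The argument is essentially a counting exercise once the translation/coset picture is in place, so I do not anticipate a genuine obstacle. The only point requiring care is the degenerate situation in (2) with $k=0$ (in particular when also $rmn=1$): there $\sigma[k]$ is the identity and hence even regardless of $rmn$, so the ``only if'' direction of (2) should be read for nonzero keys $k$, which is the case of cryptographic interest. I would also take care to state the coset-to-cycle correspondence and the cycle count $p^{rmn}/d$ precisely, including the fact that $\sigma[k]$ is fixed-point-free when $k\neq 0$.
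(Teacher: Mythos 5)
Your proof is correct and follows essentially the same route as the paper: decompose $\sigma[k]$ into the $p^{rmn-1}$ disjoint $p$-cycles given by the cosets of $\langle k\rangle$ (for $k\neq 0$) and read off the parity, which is exactly the paper's argument, only spelled out in more detail. Your remark that the ``only if'' direction of (2) must be read for $k\neq 0$ (since $\sigma[0]$ is the identity) is a fair observation that the paper's own statement and proof leave implicit.
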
 
\begin{proof}
If $k = \mathbf{0}$, $\sigma\left[ k\right]$ is the identity permutation.  If $k \ne \mathbf{0}$, then $\sigma\left[ k\right]$ is composed of $p$-cycles.  If $p$ is odd then there are no cycles of even length. If  $p = 2$ then $\sigma\left[ k\right]$ is composed of $2^{rmn-1}$  many $2$-cycles.  
\end{proof}

\subsection{{\bf Analysis of the SubBytes-like function } ($\lambda$-function)}

\begin{definition} 
Let $\lambda: M_{m,n}(\GF(p^r))\rightarrow M_{m,n}(\GF(p^r))$ denotes the mapping defined as a parallel application of $m\cdot n$ bijective S-box-mappings $\lambda_{ij} : \GF(p^r) \rightarrow \GF(p^r)$ and defined by $\lambda(a)=b$ if and only if $b_{ij}=\lambda_{ij}(a_{ij})$ for all $0\leq i<m, 0\leq j<n$.  
\end{definition}
Each S-box mapping consists of an inversion, multiplication by a fixed  $A\in \GF(p^r)$, and addition of a fixed element $B\in\GF(p^r)$ i.e. it is a mapping of the form $Ax^{-1}+B$ where $A,B\in \GF(p^r)$ are fixed. For convenience we define this map on all of $\GF(p^r)$ so that it maps $0$ to $B$, and any nonzero $x$ to $Ax^{-1}+B$.
\begin{lemma}\label{SBlemma}
Let $A\in \GF(p^r)$ be the fixed element used in the S-box mapping $\lambda_{ij}$.  If $p=2$ then the function $\lambda$ is an odd permutation if and only if $r\geq 2$ and $m\cdot n=1$. If  $p>2$ then the function $\lambda$ is an odd permutation if and only if $m$ and $n$ are odd, and either
\begin{enumerate}
\item $p \equiv_{4} 3$, $r$ is odd, and $(p^r-1)/\left\vert\left\langle A \right\rangle\right\vert$ is odd, or 
\item Either $p \equiv_{4} 1$ or $r$ is even, and $(p^r-1)/\left\vert\left\langle A \right\rangle\right\vert$ is even.
\end{enumerate}
\end{lemma}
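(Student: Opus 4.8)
The plan is to compute the parity of $\lambda$ as a product of parities of the $mn$ identical-in-structure S-box maps $\lambda_{ij}$, using the fact that the parity of a parallel application of permutations on a block state is the product of their parities raised to the appropriate power of the block size. Concretely, $\lambda$ acting on $M_{m,n}(\GF(p^r))$ permutes $(p^r)^{mn}$ states; acting through a single coordinate, each S-box $\lambda_{ij}$ is replicated $(p^r)^{mn-1}$ times, so $\mathrm{sgn}(\lambda) = \prod_{i,j}\mathrm{sgn}(\lambda_{ij})^{(p^r)^{mn-1}}$. When $p > 2$, $(p^r)^{mn-1}$ is odd, so $\mathrm{sgn}(\lambda) = \prod_{i,j}\mathrm{sgn}(\lambda_{ij})$, and since all the $\lambda_{ij}$ have the same cycle structure (they differ only by the affine constants $A,B$, which are fixed), $\mathrm{sgn}(\lambda) = \mathrm{sgn}(\lambda_{11})^{mn}$. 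This is odd iff $\mathrm{sgn}(\lambda_{11})$ is odd and $mn$ is odd, i.e. both $m$ and $n$ are odd. When $p = 2$ the exponent $(2^r)^{mn-1}$ is even unless $mn = 1$, which immediately forces the stated condition $m\cdot n = 1$ in that case, and then one must separately check that the single S-box is an odd permutation precisely when $r \ge 2$ (for $r = 1$ the map on $\GF(2)$ is a transposition or identity — one needs to see it is even; in fact on two points every S-box of this form is the identity or a transposition, so a small direct check settles $r=1$).

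The heart of the matter is therefore to determine the parity of one S-box map $x \mapsto Ax^{-1} + B$ on $\GF(p^r)$ (extended by $0 \mapsto B$). I would decompose this as a composition of three maps: inversion $\iota: x \mapsto x^{-1}$ (with $0 \mapsto 0$), multiplication $\mu_A: x \mapsto Ax$, and translation $\tau_B: x \mapsto x + B$. Each contributes a parity, and $\mathrm{sgn}(\lambda_{ij}) = \mathrm{sgn}(\tau_B)\,\mathrm{sgn}(\mu_A)\,\mathrm{sgn}(\iota)$. The translation $\tau_B$ is a product of $2$-cycles when $p = 2$ and of $p$-cycles when $p$ is odd (exactly as in Lemma \ref{ARKlemma}): for $p$ odd it is always even; for $p = 2$ on $2^r$ points it is even iff $r \ge 2$. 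The multiplication $\mu_A$ is a product of $\lvert\langle A\rangle\rvert$-cycles: it decomposes the $p^r - 1$ nonzero elements into $(p^r-1)/\lvert\langle A\rangle\rvert$ cosets of $\langle A\rangle$, each an orbit, fixing $0$; so it is odd iff a cycle of length $\lvert\langle A\rangle\rvert$ is odd (i.e. $\lvert\langle A\rangle\rvert$ is even) and the number of such cycles, $(p^r-1)/\lvert\langle A\rangle\rvert$, is odd — recalling a length-$\ell$ cycle has sign $(-1)^{\ell-1}$.

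The genuinely delicate computation is the parity of the inversion map $\iota$ on $\GF(p^r)$. Here $\iota$ fixes $0, 1, -1$ and pairs up every other nonzero $x$ with $x^{-1} \ne x$, so $\iota$ is a product of $\tfrac{1}{2}(p^r - 3)$ transpositions when $p$ is odd (since $1 \ne -1$) and $\tfrac12(2^r - 2)$ transpositions when $p = 2$ (where $1 = -1$, so only $0$ and $1$ are fixed). Thus $\mathrm{sgn}(\iota) = (-1)^{(p^r-3)/2}$ for $p$ odd. One then works out that $(p^r - 3)/2$ is odd exactly when $p^r \equiv 1 \pmod 4$, and even exactly when $p^r \equiv 3 \pmod 4$; and $p^r \equiv 3 \pmod 4$ happens iff $p \equiv 3 \pmod 4$ and $r$ is odd, while otherwise ($p \equiv 1 \pmod 4$, or $r$ even) one has $p^r \equiv 1 \pmod 4$. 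I expect this case analysis on $p^r \bmod 4$ — combined with bookkeeping the three signs and the $\lvert\langle A\rangle\rvert$-divisibility conditions — to be the main obstacle, since it is exactly what produces the two alternatives in the statement: in case (1) $\iota$ is even, $\tau_B$ is even, so oddness of $\lambda_{ij}$ must come from $\mu_A$, giving the condition $(p^r-1)/\lvert\langle A\rangle\rvert$ odd (with $\lvert\langle A\rangle\rvert$ even, which is automatic here since $p^r - 1$ is even and... one must check the parity conditions are consistent); in case (2) $\iota$ is odd, so $\lambda_{ij}$ is odd iff $\mu_A$ is even, i.e. iff $(p^r-1)/\lvert\langle A\rangle\rvert$ is even. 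Finally, assembling $\mathrm{sgn}(\lambda) = \mathrm{sgn}(\lambda_{ij})^{mn}$ with the parity of $mn$ yields the two listed conditions with the extra requirement that $m$ and $n$ both be odd.
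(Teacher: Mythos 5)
Your proposal is correct and follows essentially the same route as the paper: decompose each S-box into inversion, multiplication by $A$, and translation by $B$, determine each parity (including the $p^r \bmod 4$ analysis of the inversion's $\tfrac{1}{2}(p^r-3)$ transpositions and the $(p^r-1)/\lvert\langle A\rangle\rvert$ cycle count), and then combine over the $mn$ positions to get the requirement that $m$ and $n$ be odd. The only cosmetic difference is bookkeeping: you compute the sign of a single S-box on $\GF(p^r)$ and raise it to the power $(p^r)^{mn-1}$, whereas the paper counts the even-length cycles directly on $M_{m,n}(\GF(p^r))$ by multiplying each count by $p^{rmn-r}$ — the same computation in a different order.
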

\begin{proof}
\underline{Analysis of inversion:}  We first consider a single S-box inversion 
\[
   f:\GF(p^r)\rightarrow \GF(p^r): x\mapsto f(x) = \left\{\begin{tabular}{ll}
                                                        $x^{-1}$ & if $x\neq 0$ \\
                                                          $0$    & otherwise.
                                                          \end{tabular}
                                                   \right. 
\]
If we enumerate the elements of $\GF(p^r)$ as $(0,x_1,\,\cdots,\,x_{p^r-1})$, then we can represent $f$ in standard permutation form as 
\[
  f = \left(\begin{tabular}{ccccccc}
                   0 & $x_1$      & $x_2$      & $\cdots$ & $x_i$      & $\cdots$ & $x_{p^r-1}$ \\
                   0 & $x_1^{-1}$ & $x_2^{-1}$ & $\cdots$ & $x_i^{-1}$ & $\cdots$ & $x^{-1}_{p^r-1}$
            \end{tabular}
     \right) 
\]
Writing this in disjoint cycle form we see that $f$ consists entirely of 1-cycles and 2-cycles. The 1-cycles correspond to the $x$ for which $x^2 = 1$ or $x=0$, while 2-cycles correspond to the rest of the $x$'s.

Assume that $p>2$. Since $\GF(p^r)\setminus\{0\}$ is a cyclic group under multiplication, it has only $\phi(2)=1$ element of order $2$, and thus counting the identity also, there are two elements $x$ with $x=x^{-1}$. Thus, there are $p^r-3$ other nonzero elements, and these form 2-cycles in pairs, giving a total of $\frac{1}{2}(p^r-3)$  many 2-cycles in the disjoint cycle decomposition of the $f$ function. If $p=2$ then $p^r-1$ is odd, and so the cyclic group $\GF(2^r)\setminus\{0\}$ (under multiplication) has no elements of order $2$ (since $2$ is not a divisor of $2^r-1$), and so there is only one solution to $x=x^{-1}$ in this case, namely the identity. The remaining $2(2^{r-1}-1)$ non-zero elements contribute $2^{r-1}-1$ disjoint 2-cycles in the cycle decomposition of the $f$ function.

Next we analyze the inversion function as a function over $M_{m,n}(\GF(p^{r}))$.
\begin{itemize}
\item [(a)] {Consider $p>2$.} 
\end{itemize}
When $p>2$, a fixed position (i,j) S-box inversion defined on $M_{m,n}(\GF(p^r))$ still consists of 1-cycles and 2-cycles. The remaining $mn-1$ positions in the $m\times n$ matrices in $M_{m,n}(\GF(p^r))$ can be filled in
 $p^{rmn-r}$ ways, thus producing   
 \[
  \frac{1}{2} (p^{rmn-r})(p^r-1)
  \]
2-cycles over $M_{m,n}(\GF(p^{r}))$, leading to a total of
\begin{equation}
\frac{1}{2}(p^{rmn-r})(p^r-3)
\end{equation}
2-cycles, which is an odd number if $p \equiv_4 1$  or $r$ is even.
\begin{itemize}
\item [(b)] {Consider $p=2$.} 
\end{itemize}
Over $M_{m,n}(\GF(2^{r}))$, a fixed position (i,j) S-box inversion consists of inversion in one position's subfield $\GF(2^r)$ and the identity on all other $(mn-1)$ subfields. Therefore, for every 2-cycle over $\GF(2^r)$, there are $2^{rmn-r}$  many 2-cycles over $\GF(2^{rmn})$.  The total number of 2-cycles is 
\[
\frac{1}{2}(2^{rmn-r})(2^r-2),
\]
which is even if and only if  $mn \ge 2$.
{\flushleft{\underline{Analysis of multiplication by a fixed polynomial in  $\GF(p^r)$:}}}
Multiplication by a fixed polynomial (field element) $A\in\GF(p^r)$ produces cycles of length $\vert\langle A \rangle \vert$ for multiplication with a non-zero field element, and length one for multiplication with the zero element.  Over $M_{m,n}(\GF(p^{r}))$, there are 
\begin{equation}\label{polyeq}
\frac{(p^{rmn-r})(p^r-1)}{\vert\langle A \rangle\vert}
\end{equation}
of these cycles, each of length $\vert\langle A\rangle\vert$ (see equation (\ref{MCeq})).
\begin{itemize}
\item [(a)] {Consider $p>2$.} 
\end{itemize}
Then (\ref{polyeq}) is an odd number if and only if $(p^r-1)/\vert\langle A \rangle \vert$ is odd, in which case the cycle length $\vert\langle A \rangle \vert$ is even. In this case the permutation obtained from multiplication by $A$ is an odd permutation.
\begin{itemize}
\item [(b)] {Consider $p=2$.} 
\end{itemize}
$\vert\langle A\rangle\vert$ is odd, so there are no even-length cycles. In this case the permutation obtained from multiplication by the polynomial $A\in\GF(p^r)$ is an even permutation.
{\flushleft{\underline{Analysis of addition of a constant:}}}
If $p>2$ the addition of a constant is always an even permutation and if $p=2$ then it is even if and only if $m\cdot n\cdot r>1$ (Lemma \ref{ARKlemma}).

From the above, we conclude that for $p$ an odd prime the S-box mapping $\lambda_{ij}$ is odd if $(p^r-1)/|\langle A \rangle |$ is odd, or $p \equiv_4 1$ or $r$ even, but not both. Thus, the function $\lambda$ defined as parallel application of all $m\cdot n$ S-box mappings $\lambda_{ij}$ is odd if and only if each S-box mapping $\lambda_{ij}$ is odd and $m$ and $n$ are odd. For $p=2$ the function  $\lambda$ is odd if and only if $r\geq 2$ and $m\cdot n=1$.
\end{proof}

\subsection{{\bf Analysis of the ShiftRows-like function} ($\pi$-function)}
\begin{definition} 
Let $\pi: M_{m,n}(\GF(p^r))\rightarrow M_{m,n}(\GF(p^r))$ denotes the mapping for which there is a mapping $c: \{0,\ldots, m-1\}\rightarrow \{0,\ldots, n-1\}$ such that $\pi(a)=b$ if and only if $b_{ij}=a_{i(j-c(i))\;mod\;n}$ for all $0\leq i <m$, $0\leq j <n$.  
\end{definition}
We present our analysis of the parity of $\pi$  in two cases according to whether $p$ is an odd prime number or $p=2$.
\begin{lemma}\label{SRlemma} Let $p>2$ be a prime.  If  $p\equiv_4 3$,  $n$ is even, $r$ is odd, and $gcd(n,c(i))$ is odd for an odd number of $i\in\{0,\,\cdots,\,m-1\}$ then the function  $\pi$ is an odd permutation; otherwise it is even. 
\end{lemma}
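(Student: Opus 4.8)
The plan is to exploit the fact that $\pi$ acts independently on the $m$ rows of the state matrix, so that $\pi=\pi_0\times\pi_1\times\cdots\times\pi_{m-1}$, where $\pi_i$ is the cyclic rotation by $c(i)$ of row $i$, viewed as a permutation of $\GF(p^r)^n$. For permutations $\alpha$ of $X$ and $\beta$ of $Y$ one has $\mathrm{sgn}(\alpha\times\beta)=\mathrm{sgn}(\alpha)^{|Y|}\,\mathrm{sgn}(\beta)^{|X|}$ (write $\alpha\times\beta=(\alpha\times\mathrm{id})\circ(\mathrm{id}\times\beta)$, and note each factor is a disjoint union of copies of $\alpha$, resp.\ $\beta$). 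Iterating this and using that every set size that occurs here is a power of the odd number $p^r$, I get $\mathrm{sgn}(\pi)=\prod_{i}\mathrm{sgn}(\pi_i)$; so it suffices to decide the parity of each $\pi_i$, and $\pi$ is odd iff an odd number of the $\pi_i$ are odd.

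To handle $\pi_i$, put $d_i=\gcd(n,c(i))$ and $\ell_i=n/d_i$. Rotation by $c(i)$ splits the $n$ columns of row $i$ into $d_i$ orbits, each an $\ell_i$-cycle, so $\pi_i$ is conjugate to a $d_i$-fold direct product $\rho_{\ell_i}\times\cdots\times\rho_{\ell_i}$, where $\rho_\ell$ denotes the cyclic shift of $\GF(p^r)^\ell$. Applying the sign identity again (all relevant set sizes are odd), $\mathrm{sgn}(\pi_i)=\mathrm{sgn}(\rho_{\ell_i})^{d_i}$, which equals $\mathrm{sgn}(\rho_{\ell_i})$ if $d_i$ is odd and equals $1$ if $d_i$ is even. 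Everything thus reduces to computing $\mathrm{sgn}(\rho_\ell)$ over a field of odd order $q=p^r$.

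If $\ell$ is odd, all cycle lengths of $\rho_\ell$ divide $\ell$ and so are odd, hence $\rho_\ell$ is even. If $\ell$ is even, I use $\mathrm{sgn}(\rho_\ell)=(-1)^{\,q^\ell-N}$ with $N$ the number of cycles of $\rho_\ell$; by the orbit-counting lemma applied to the $\langle\rho_\ell\rangle\cong\ZZ/\ell$ action on $\GF(p^r)^\ell$, and since a shift by $t$ fixes exactly $q^{\gcd(t,\ell)}$ tuples, $N=c_\ell(q)$ where $c_\ell(Q):=\frac1\ell\sum_{e\mid\ell}\phi(\ell/e)\,Q^e$ is the classical necklace count. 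As $q^\ell$ is odd this gives $\mathrm{sgn}(\rho_\ell)=(-1)^{1+c_\ell(q)}$, so the crux is the arithmetic assertion: for $Q$ odd and $\ell$ even, $c_\ell(Q)\equiv\frac{Q+1}{2}\pmod 2$, i.e.\ $\rho_\ell$ is odd precisely when $Q\equiv_4 3$. To prove this, write $\ell=2^a m$ with $m$ odd (so $a\ge1$); grouping divisors of $\ell$ by their $2$-part gives $2^a c_\ell(Q)=u_a+\sum_{j=0}^{a-1}2^{\,a-1-j}u_j$ with $u_j=c_m(Q^{2^j})$, and each $u_j$ is odd (the odd-$\ell$ case applied to $m$). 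Now feed in the elementary congruences $Q^{2^j}\equiv1\pmod{2^{\,j+2}}$ for $j\ge1$ — whence $u_j\equiv1\pmod{2^{\,j+2}}$ since $m$ is invertible mod powers of $2$ — together with $u_0\equiv Q\pmod 4$; a short computation reduces the displayed sum modulo $2^{a+1}$ to $2^{\,a-1}(Q+1)$, which is the claim. I expect this $2$-adic bookkeeping for the necklace numbers $c_\ell(Q)$ to be the only real obstacle; everything else is multiplicativity of the sign.

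It remains to reassemble. By the above, $\mathrm{sgn}(\pi_i)=-1$ exactly when $d_i$ is odd, $\ell_i$ is even, and $q=p^r\equiv_4 3$. But $p^r\equiv_4 3$ iff $p\equiv_4 3$ and $r$ is odd; and when $d_i=\gcd(n,c(i))$ is odd, $n$ and $\ell_i=n/d_i$ contain the same power of $2$, so $\ell_i$ is even iff $n$ is even. Hence $\pi_i$ is odd precisely when $p\equiv_4 3$, $r$ is odd, $n$ is even, and $\gcd(n,c(i))$ is odd. Since $\mathrm{sgn}(\pi)=\prod_i\mathrm{sgn}(\pi_i)$, the permutation $\pi$ is odd exactly when, in addition, the number of such $i\in\{0,\dots,m-1\}$ is odd, which is the stated characterization.
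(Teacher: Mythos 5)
Your proof is correct, and although it shares the paper's overall skeleton --- reduce to the individual row permutations (valid here because every ambient set has odd cardinality, so the signs simply multiply), then settle the parity of a single row shift by a $2$-adic cycle count --- the core computation is genuinely different. The paper works with the row permutation on $\GF(p^r)^n$ directly: it identifies the possible cycle lengths as the divisors $d'$ of $n/\gcd(n,c(i))$, counts the vectors of exact period $d'$ by M\"obius inversion, shows modulo $2^{a+1}$ (where $2^a$ is the exact power of $2$ in $d'$) that every even $d'>2$ contributes an even number of even-length cycles, and extracts the parity from the single surviving term $N(2)/2=p^{r\gcd(n,c(i))}\bigl(p^{r\gcd(n,c(i))}-1\bigr)/2$. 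You instead conjugate the row permutation into $\gcd(n,c(i))$ independent copies of the standard one-step shift $\rho_\ell$ on $\GF(p^r)^\ell$ with $\ell=n/\gcd(n,c(i))$, and count all cycles of $\rho_\ell$ at once by Burnside's lemma as the necklace number $c_\ell(Q)$ with $Q=p^r$, so the whole lemma rests on the single congruence $c_\ell(Q)\equiv (Q+1)/2 \pmod 2$ for $\ell$ even and $Q$ odd. I checked the step you flagged: with $\ell=2^a m$, $m$ odd, your identity $2^a c_\ell(Q)=u_a+\sum_{j=0}^{a-1}2^{a-1-j}u_j$ with $u_j=c_m(Q^{2^j})$ is correct, and the congruences $u_j\equiv 1\pmod{2^{j+2}}$ for $j\ge 1$ and $u_0\equiv Q\pmod 4$ give $2^a c_\ell(Q)\equiv 1+2^{a-1}Q+(2^{a-1}-1)=2^{a-1}(Q+1)\pmod{2^{a+1}}$, exactly as claimed (the moduli are tight but sufficient, including the boundary case $a=1$). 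What your route buys is a cleaner separation of the arithmetic: the dependence on $r$ and $\gcd(n,c(i))$ enters only through $Q\bmod 4$ and the parity of $\gcd(n,c(i))$, and the case $n$ odd or $\gcd(n,c(i))$ even falls out of the same statement, whereas the paper must carry $r\cdot d\cdot\gcd(n,c(i))$ in exponents and treat $d'=2$ and $d'>2$ separately; conversely, the paper's M\"obius-inversion framework needs no conjugation or necklace-counting setup and is reused essentially verbatim for the characteristic-two case in Lemma \ref{2isp}.
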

\begin{proof}
The function $\pi$ permutes each row of the state matrix, an element of $M_{m,n}(\GF(p^r))$, by shifting that row by a constant offset. To analyze the parity of the whole permutation, we consider it as the composition of $m$ row permutations. A row permutation shifts a specific row by the corresponding offset, while leaving all other entries of the matrix fixed. Thus for a specific matrix from $M_{m,n}(\GF(p^r))$, such a row permutation leaves $(m-1)n$ entries fixed.

The parity of the function $\pi$  is then computed from the parity of each row permutation by considering the permutation of $M_{1,n}(\GF(p^r))$ corresponding to the restriction of the row permutation that corresponds to the particular row in question. We count the number of even-length cycles (note that an even length cycle is an odd permutation) in the cycle decomposition of this restricted permutation, and then multiply by $p^{r(m-1)n}$ to obtain the number of even length cycles of the row permutation over $M_{m,n}(\GF(p^r))$. 

We first identify the possible lengths of cycles in the cycle decomposition of this permutation, and then we count the number of cycles of each length. From this information and the value of the prime number $p$ we then conclude what is the parity of the permutation $\pi$.

{\flushleft\underline{Analysis of the cycle lengths:}} To determine the possible length of a cycle of the permutation that leaves all entries in the $m\times n$ matrix fixed, except for the $i$-th row, and which shifts the $i$-th row's $n$ entries by $c(i)$ units each, consider all the $n$-vectors whose entries are elements of $\GF(p^r)$. A typical such vector is of the form $(x_0,\cdots,x_{n-1})$ where the $x_j$ are elements of $\GF(p^r)$. A single application of this permutation maps as follows:
\[
  (x_0,\cdots,x_{n-1})\mapsto (x_{n-c(i)+0 \,mod\, n}, \cdots, x_{n-c(i) + n-1 \,mod\, n}).
\]
And $k$ iterations of this permutation maps as follows:
\[
  (x_0,\cdots,x_{n-1})\mapsto (x_{k\cdot(n- c(i))+0 \,mod\, n}, \cdots, x_{k\cdot(n-c(i)) + n-1 \,mod\, n}).
\]
The least $k>0$ which, for any $n$-vector $(x_0,\cdots,x_{n-1})$ of elements of $\GF(p^r)$ produces 
\[
  (x_{k\cdot(n-c(i))+0 \,mod\, n},\,\cdots,\, x_{k\cdot(n-c(i)) + n-1 \,mod\, n}) = (x_0,\cdots,\,x_{n-1})
\]
gives the order of the cyclic group $G$ generated by this row permutation. For this $k$ we have 
\[
  k\cdot(n-c(i)) \equiv 0 \,mod\, n
\]
meaning $k\cdot c(i)$ is a common multiple of $c(i)$ and $n$. By minimality of $k$, this is the least common multiple of $c(i)$ and $n$, which is $\frac{n\cdot c(i))}{gcd(n,c(i))}$ and thus $k = \frac{n}{gcd(n,c(i))}$.

By the Orbit-Stabilizer Theorem we see that for any $n$-vector $(x_0,\cdots,\, x_{n-1})$ we have
\[
  \frac{n}{gcd(n,c(i))} = \vert G\vert = \vert orb_G((x_0,\cdots,x_{n-1}))\vert\cdot\vert stab_G((x_0,\cdots,x_{n-1}))\vert.
\]
But the orbit of $(x_0,\cdots,x_{n-1})$ ``is" the cycle containing $(x_0,\cdots,x_{n-1})$ in the disjoint cycle decomposition of this row permutation. And the length of this cycle is thus a factor of $\frac{n}{gcd(n,c(i))}$. 

For the factor $d=1$, a fixed point is built by taking a vector $(x_0,\cdots,x_{gcd(n,c(i))-1})$, and concatenating it $\frac{n}{gcd(n,c(i))}$ times to form a vector of length $n$. There are $p^r$ choices of each of the $x_i$, and thus $p^{r\cdot gcd(n,c(i))}$  many $n$-vectors with orbit length equal to $1$. 
{\flushleft {\bf Claim 1:}} For each factor $d>1$, there is an $n$-vector $(x_0,\cdots,x_{n-1})$ for which the orbit length is $d$. Fix $f$ such that $d\cdot f = \frac{n}{gcd(n,c(i))}$ and choose distinct elements $x, y\in\GF(p^r)$. Consider the $n$-vector which consists of the concatenation of $f$ copies of the vector $(y,\, \cdots ,\, y,\,x)$ which has only one entry equal to $x$,
\[
  (y,\,\cdots,\,y,\,x)\frown (y,\,\cdots,\,y,\,x) \frown \cdots \frown (y,\,\cdots,\,y,\,x).
\]
Note that the vector $(y,\, \cdots ,\, y,\,x)$ has length $d\cdot gcd(n,c(i))$. 

Consider the last $x$ of this $n$-vector. After a minimum number of $t$ applications of the permutation, it is in a position of an $x$ in the $n$-vector. Then  
\[
t=lcm(d\cdot gcd(n,c(i)), c(i))=\frac{d\cdot gcd(n,c(i))\cdot c(i)}{gcd(d\cdot gcd(n,c(i)),c(i))}
\]
As $d$ divides $\frac{n}{gcd(n,c(i))}$ if follows that $gcd(d,c(i))$ divides $gcd(\frac{n}{gcd(n,c(i))},c(i)) $.  Since $gcd(\frac{n}{gcd(n,c(i))},c(i)) =1$  we have that 
\[
gcd(d\cdot gcd(n,c(i)),c(i)) = gcd(n,c(i))
\]
It follows that $t = d\cdot c(i)$ applications of the permutation has this $n$-vector as fixed point. Any iteration of this $d\cdot c(i)$-iterate has this $n$-vector as fixed point, and the order of this $d\cdot c(i)$-iterate is 
\begin{eqnarray*}
 \frac{\frac{n}{gcd(n,c(i))}}{gcd(d\cdot c(i),\frac{n}{gcd(n,c(i))})} & = & \frac{\frac{n}{gcd(n,c(i))}}{gcd(d,\frac{n}{gcd(n,c(i))})} \\
    & = & \frac{\frac{n}{gcd(n,c(i))}}{d} \\
    & = & \frac{n}{d\cdot gcd(n,c(i))}\\
    & = & f.
\end{eqnarray*}
It follows that 
\[
  \vert stab_G((y,\,\cdots,\,y,\,x)\frown (y,\,\cdots,\,y,\,x) \frown \cdots \frown (y,\,\cdots,\,y,\,x))\vert = f,
\]
and thus the orbit has $d$ elements, meaning that in the cycle decomposition of the permutation the cycle containing this vector has length $d$. This completes the proof of Claim 1, and establishes all occurring cycle lengths for this permutation.

{\flushleft\underline{Analysis of the number of cycles of a given length:}} Fix a divisor $d'$ of $\frac{n}{gcd(n,c(i))}$. We now count the number of cycles of length exactly $d'$ in the cycle decomposition of the given permutation.
As observed before, for $d'=1$ there are exactly $p^{r\cdot gcd(n,c(i))}$ cycles of length $1$ for this permutation. Now consider the case when $d'>1$. It can be shown that if an $n$-vector $(x_0,\cdots,x_{n-1})$ has an orbit of length dividing $d'$, then it is a concatenation of a number of copies of a vector $(y_1,\cdots,y_{d'\cdot gcd(n,c(i))})$. The total number of such vectors  that can be constructed using the elements of $\GF(p^r)$ is $p^{r\cdot d'\cdot gcd(n,c(i))}$.  But  for $d'>1$ many of these ($d'\cdot gcd(n,c(i))$) vectors have orbits whose cardinality is a proper divisor of $d'$ and thus should be excluded from the count of items producing cycles of length exactly $d'$. Notice that for $d$ a divisor of $d'$ the vectors producing orbits of cardinality $d$ are obtained by concatenating the vector $(y_1,\cdots,y_{d\cdot gcd(n,c(i))})$ the appropriate number of times. The vectors among ones of the form $(z_1,\cdots,z_{d'\cdot gcd(n,c(i))})$ to be excluded are those obtained by concatenating $\frac{d'}{d}$ copies of a vector $(y_1,\cdots,y_{d\cdot gcd(n,c(i))})$ to obtain the vector $(z_1,\cdots,z_{d'\cdot gcd(n,c(i))})$. Let  $N(d')$ denote the number of ($d'\cdot gcd(n,c(i))$) vectors that produce cycles of length exactly $d'$. Thus, $N(1) = p^{r\cdot gcd(n,c(i))}$.
For $d'>1$  we find that
\[
  N(d') = p^{r\cdot d'\cdot gcd(n,c(i))} - \sum_{d\vert d',\, d\neq d'} N(d)
\]
Alternately this can be written 
\[
  p^{r\cdot d'\cdot gcd(n,c(i))} = \sum_{d\vert d'}N(d).
\]
By the M\"obius inversion formula (Theorem 2 on p. 20 of \cite{IR}) we have
\begin{equation}\label{mobiuseq}
  N(d') = \sum_{d\vert d'}p^{r\cdot d\cdot gcd(n,c(i))}\mu\Bigg{(}\frac{d'}{d}\Bigg{)}.
\end{equation}
Note that since each orbit contains exactly $d'$ elements, the number of disjoint cycles in the cycle decomposition of the permutation contributed by these vectors is $\frac{N(d')}{d'}$. The question is whether the number $\frac{N(d')}{d'}$ is even, or odd. Since a cycle of odd length is an even permutation, the answer to this question is relevant only when $d'$ is even.
Let $d'$ be even and have prime factorization
\begin{equation}\label{dfactors}
  d' = 2^a \cdot p_1^{s_1}\cdot\dots\cdot p_t^{s_t}, \hspace{0.1in} a>0.
\end{equation}
Since we are interested in only the parity of $\frac{N(d')}{d'}$, we seek to determine if 
\begin{equation}\label{parityonly}
 N(d') \,mod\, 2^{a+1}
\end{equation} 
is zero, or positive.

Consider $\mu(\frac{d'}{d})$ for an even $d'$ and a factor $d$ of $d'$. By the definition of $\mu$, the only case when $\mu(\frac{d'}{d})$ is non-zero is when $\frac{d'}{d}$ is $1$, or else square free (\emph{i.e.}, a product of distinct prime numbers). In each of these cases the power of $2$ that divides into $d$ is at least $2^{a-1}$, so that the factor $p^{r\cdot d\cdot gcd(n,c(i))}$ of the term corresponding to the factor $d$ is of the form $v^{2^{a-1}}$ where $v$ is an odd number if $p$ is an odd prime number, and even otherwise. 

Let $a >1$.  Then for any odd number $v$ we have that $v^{2^{a-1}} \equiv 1 \, mod \,2^{a+1} $, by Theorem 2$^{\prime}$ in Chapter 4.1 of \cite{IR}. Then the equation (\ref{parityonly}) reduces to 
\begin{eqnarray*}
 \sum_{d\vert d'}p^{r\cdot d\cdot gcd(n,c(i))}\mu\Bigg{(}\frac{d'}{d}\Bigg{)} \, mod\, 2^{a+1}
   & = & \sum_{d\vert d'} 1\cdot \mu\Bigg{(}\frac{d'}{d}\Bigg{)} \, mod\, 2^{a+1} =0
\end{eqnarray*}
since for any integer $d'>1$ we have, by Proposition 2.2.3 on p. 19 of \cite{IR}, that  $\sum_{d\vert d'}\mu(d) = 0$.

Next, consider $a=1$. We need to analyze the following two cases.\\
{\bf Case 1:} $r$ is even or $p\equiv_4 1$.
Since for each odd number $v$ we have $v^2\equiv_{4} 1$ and since we have $a=1$ in equation (\ref{dfactors}), the equation (\ref{parityonly}) reduces to
\begin{eqnarray*}
\sum_{d\vert d'}p^{r\cdot d\cdot gcd(n,c(i))}\mu\Bigg{(}\frac{d'}{d}\Bigg{)} \, mod\, 4
   & = & \sum_{d\vert d'} 1\cdot \mu\Bigg{(}\frac{d'}{d}\Bigg{)} \, mod\, 4=0
\end{eqnarray*}
using Proposition 2.2.3 on p. 19 of \cite{IR} as in the previous case and the fact that $a=1$ and $p \equiv_4 1$ or $r$ is even.
This concludes the argument that if $p$ is a prime number such that $p \equiv_ 4 1$, or if $r$ is even, then for each $i$ the permutation shifting each item in the $i$-th row of $M_{m,n}(\GF(p^r))$ by $c(i)$ units is an even permutation. As a result the function  $\pi$  is a composition of even permutations, and thus is an even permutation in this case.\\
{\bf Case 2:} $r$ is odd and $p\equiv_{4} 3$. We will start analyzing this case by first assuming that  $d'>2$. The factors of $d'$ are either of the form $2d$ where $d$ is odd, or $d$ where $d$ is odd.  
{\flushleft\underline {Part 1:}}  Factors of the form $2d$, where $d$ is odd.  Since for each odd number $v$,  $v^2\equiv_{4} 1$ we have the following
\begin{eqnarray*}
  \sum_{2d\vert d'}p^{r\cdot 2d\cdot gcd(n,c(i))}\mu\Bigg{(}\frac{d'}{2d}\Bigg{)} \, mod\, 4
   & =& \sum_{d\vert \frac{d'}{2}} p^{r\cdot 2d\cdot gcd(n,c(i))}\cdot \mu\Bigg{(}\frac{d'}{2d}\Bigg{)} \, mod\, 4 \\
   &= & \sum_{d\vert \frac{d'}{2}} 1 \cdot \mu\Bigg{(} \frac{d'}{2d} \Bigg{)} \, mod\, 4 =0
\end{eqnarray*} 
again using Proposition 2.2.3 on p. 19 of \cite{IR} as before.
{\flushleft\underline {Part 2:}}   Factors of the form $d$, where $d$ is odd. 
First note that if $v$ is an odd number such that $v \equiv_{4} 3$, then for any odd number $r$,  and $v^r \equiv_{4} 3$. Using this observation and the fact that $a=1$ in equation (\ref{dfactors}), the equation (\ref{parityonly}) reduces to
\begin{eqnarray*}
 \sum_{d\vert \frac{d'}{2}}p^{r\cdot d\cdot gcd(n,c(i))}\mu\Bigg{(}\frac{d'}{d}\Bigg{)} \, mod\, 4
   & = &  
  \sum_{d\vert \frac{d'}{2}} 3^{gcd(n,c(i))} \cdot\Bigg{(}- \mu\Bigg{(}\frac{d'}{2d}\Bigg{)}\Bigg{)} \, mod\, 4 \\
  &= &  
  (-3^{gcd(n,c(i))}) \cdot \sum_{d\vert \frac{d'}{2}} \mu\Bigg{(}\frac{d'}{2d}\Bigg{)} \, mod\, 4=0
\end{eqnarray*} 
Here we used the fact that $\mu$ is multiplicative, so that for odd $w$, $\mu(2w) = \mu(2)\mu(w) = -\mu(w)$, and we again used Proposition 2.2.3 on p. 19 of \cite{IR}.
Taking Part 1 and Part 2 together, we obtain for $d'>2$ that   $N(d') \equiv_4 0$.

Next, assume that $d'=2$. Then  $\frac{N(d')}{d'}$ reduces to
\begin{eqnarray*}
\frac{p^{r\cdot 1\cdot gcd(n,c(i))}\mu(2) + p^{r\cdot 2\cdot gcd(n,c(i))}\mu(1)}{2} 
   & = &  
\frac{p^{r\cdot gcd(n,c(i))}\cdot(p^{r\cdot gcd(n,c(i))} -1)}{2}
\end{eqnarray*}
Since $p$ is odd, the parity of this quantity depends entirely on the parity of  $\frac{p^{r\cdot gcd(n,c(i))} -1}{2}$, which in turn depends on the parity of $r\cdot gcd(n,c(i))$. For this we consider the parity of $\frac{(4k+3)^m-1}{2}$ (since $p\equiv_ 4  3$). By the Binomial Theorem $(4k+3)^m$ has the form $3^m + 4x$ for an appropriate integer $x$, and so $\frac{(4k+3)^m-1}{2} = \frac{3^m + 4x - 1}{2}$, and the parity of this quantity depends on the parity of $\frac{3^m-1}{2}$. Applying the Binomial Theorem to $3^m = (2+1)^m$, we see that $3^m$ is of the form $1 + 2m + 4x$ for an appropriate integer $x$. Thus, $\frac{3^m-1}{2}$ is of the form $\frac{2m+4x}{2}$, which is even if, and only if, $m$ is even. Thus, as $r$ is odd, we find that  $\frac{N(2)}{2}\equiv _2 0$ if $gcd(n,c(i))$ is even and $\frac{N(2)}{2}\equiv _2 1$ if $gcd(n,c(i))$ is odd.  Since for divisors $d'>2$ of $\frac{n}{gcd(n,c(i)}$ we have $\frac{N(d')}{d'}$ even, if follows that when $p \equiv _4 3$ the row permutation is even if, and only if, $r\cdot gcd(n,c(i))$ is even. Since the function $\pi$  is a composition of these row permutations we see that for $p\equiv _4 3$, we have that $\pi$ is an odd permutation if and only if $r\cdot{gcd(n,c(i))}$ is odd for an odd number of $i$ and even for the remaining values of $i$.
\end{proof}

\begin{lemma}\label{2isp} 
If $m\cdot r\cdot gcd(n,c(0))=1$ and $n=2$ then the function $\pi: M_{m,n}(\GF(2^r))\rightarrow M_{m,n}(\GF(2^r))$ is an odd permutation; otherwise it is an even permutation.
\end{lemma}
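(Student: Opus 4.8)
The plan is to specialise the cycle-structure analysis used in the proof of Lemma~\ref{SRlemma} to characteristic $p=2$ with $n=2$, where almost all of the M\"obius-inversion bookkeeping disappears. As there, I would write $\pi$ as a composition of $m$ row permutations, the $i$-th of which shifts the $i$-th row of a matrix in $M_{m,2}(\GF(2^r))$ by $c(i)$ units and fixes every other entry; the parity of $\pi$ is then the product of the parities of these row permutations. Since $n=2$, the controlling quantity $\frac{n}{\gcd(n,c(i))}$ is either $1$ (when $c(i)$ is even, so the shift is trivial modulo $2$) or $2$ (when $c(i)$ is odd). Hence each row permutation consists only of fixed points and $2$-cycles, and its parity is exactly the parity of the number of its $2$-cycles.

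Next I would count these $2$-cycles. If $c(i)$ is even, the $i$-th row permutation is the identity, hence even. If $c(i)$ is odd, its restriction to a single row $M_{1,2}(\GF(2^r)) \cong \GF(2^r)^2$ is the coordinate swap $(x_0,x_1)\mapsto(x_1,x_0)$, which fixes the $2^r$ vectors with $x_0=x_1$ and pairs the remaining $2^{2r}-2^r$ vectors into $2^{r-1}(2^r-1)$ transpositions. Multiplying by $2^{r(m-1)\cdot 2}$, the number of ways to fill the other $m-1$ rows (the multiplier $p^{r(m-1)n}$ used in the proof of Lemma~\ref{SRlemma}), the $i$-th row permutation over $M_{m,2}(\GF(2^r))$ has exactly $2^{\,2r(m-1)+r-1}(2^r-1)$ many $2$-cycles. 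Because $2^r-1$ is odd, this count is odd precisely when the exponent $2r(m-1)+r-1$ equals $0$; since $r\ge 1$ and $m\ge 1$ force $2r(m-1)\ge 0 \ge 1-r$ with equality only at $m=1$ and $r=1$, the count is odd if and only if $m=r=1$.

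Finally I would assemble the conclusion. When $m=r=1$ there is a single row permutation and $\pi$ equals it, so $\pi$ is odd exactly when $c(0)$ is odd, i.e. when $\gcd(2,c(0))=1$; together with $m=r=1$ this is precisely the hypothesis $m\cdot r\cdot\gcd(n,c(0))=1$. In every other case, whether because $m\ge 2$, or $r\ge 2$, or $c(0)$ is even, each of the $m$ row permutations is the identity or has an even number of $2$-cycles, hence is an even permutation, so $\pi$ is a composition of even permutations and is therefore even.

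I do not anticipate a genuine obstacle: the general cycle-length machinery already established in Lemma~\ref{SRlemma} does the work, and for $n=2$ only the divisors $d'\in\{1,2\}$ occur, so no M\"obius sum survives. The one spot that needs care is verifying that the exponent $2r(m-1)+r-1$ is strictly positive for every $(m,r)\ne(1,1)$ with $m,r\ge 1$, which is what pins the parity change to exactly the stated set of parameters.
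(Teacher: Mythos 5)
Your argument for the case $n=2$ is correct and is essentially the paper's conclusion for that case, obtained more cleanly: with $n=2$ each row permutation is either the identity or a coordinate swap, and your count of $2^{2r(m-1)+r-1}(2^r-1)$ transpositions pins the odd parity exactly to $m=r=1$ with $c(0)$ odd.

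However, there is a genuine gap in scope. The lemma's ``otherwise it is an even permutation'' is not restricted to $n=2$: it asserts that for $p=2$ the map $\pi$ is even for \emph{every} parameter choice outside the single exceptional case, and the paper's proof (and the later Corollary~\ref{n>2}, which needs $\pi$ even for all $n>2$ over $\GF(2^r)$) explicitly treats even $n>2$. Your proposal specialises to $n=2$ from the outset (``the controlling quantity $\tfrac{n}{\gcd(n,c(i))}$ is either $1$ or $2$''), so it never addresses, e.g., $m=r=1$, $c(0)$ odd, $n=6$: there the row permutation is a full cyclic shift of $\GF(2)^6$, whose cycle decomposition (one $2$-cycle, two $3$-cycles, nine $6$-cycles) has ten even-length cycles and is even — but this is exactly where the M\"obius-inversion counting from Lemma~\ref{SRlemma}, which you dismiss as unnecessary, does the real work: the paper shows that the squarefree even divisors $d'>2$ of $n/\gcd(n,c(0))$ contribute an odd total of even-length cycles that cancels the single cycle from $d'=2$, and that all contributions vanish mod the relevant power of $2$ when $m\cdot r\cdot\gcd(n,c(0))>1$. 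To complete the lemma you must either carry out that counting for general even $n$ (odd $n$ is immediate, since then all cycle lengths are odd), or explicitly restrict the statement you are proving — but as stated, the $n\neq 2$ branch is part of the claim and is missing from your proof.
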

\begin{proof} We analyze separately the case when $m>1$ and $m=1$.

{{\bf Case 1:} Let $m>1$.} The number $2^{r(m-1)n}$ is an even number, and each cycle length of the permutation  $\pi$ appears a multiple of $2^{r(m-1)n}$ times in its cycle decomposition. Thus in this case $\pi$ is an even permutation.

{{\bf Case 2:} Let $m=1$.} Then the function $\pi$ is a single row permutation, and the factor $2^{r(m-1)n}$ is equal to $1$, so that the parity argument when $m>1$ does not apply.  Once again apply the equations (\ref{mobiuseq}) and (\ref{parityonly}) for $p=2$. Considering a factor $d'$ of $\frac{n}{gcd(n,c(0))}$ with factorization as in equation (\ref{dfactors}), we distinguish again between the cases $a>1$ and $a=1$.

For $a>1$ we have  $n>2$ and the factors $2^{r\cdot d\cdot gcd(n,c(i))}$ in the nonzero terms of (\ref{mobiuseq}) have $2^{a-1}$ as a divisor of $d$. Write $d = k_d\cdot 2^{a-1}$. We have
\[
   2^{r\cdot d\cdot gcd(n,c(i))} =  2^{r\cdot k_d\cdot 2^{a-1}\cdot gcd(n,c(i))}
\]
which for each nonnegative integer $a$ is divisible by $2^{2^{a-1}}$, which in turn is divisible by $2^{a+1}$. Thus we find from equation (\ref{parityonly}) that $N(d')\equiv \mu(d')2^{r\cdot gcd(n,c(i))} \, mod\, 2^{a+1}$. 
But since $a>1$ we must have $\mu(d')=0$. It follows that $\frac{N(d')}{d'}$ is even in this case.

For $a=1$ we see  that the only contributing terms to the parity of the $i$-th row permutation are of the form 
\[
  N(d') = \mu(d')2^{r\cdot gcd(n,c(0))} \,mod\, 4
\]
where $d'$ is an even squarefree factor of $\frac{n}{gcd(n,c(0))}$. If $r\cdot gcd(n,c(0))>1$ then $N(d') \equiv_ 4 0$ and the  factor $d'$ of $\frac{n}{gcd(n,c(0))}$ contributes an even number of cycles of even length to the cycle decomposition of the row permutation. We see that for $m\cdot r\cdot gcd(n,c(0)) > 1$ the function $\pi$ is an even permutation.

Finally consider the case when $m\cdot r\cdot gcd(n,c(0))=1$. For $d'>2$ a squarefree even factor of $\frac{n}{gcd(n,c(0))}$, we have that $N(d') =\mu(d')2 \equiv_4 2$. Suppose that $n$ has $x+1$ distinct prime factors, including $2$. Thus, as $d'>2$, we have $x>0$. The number of squarefree even factors of $\frac{n}{gcd(n,c(0))}$  larger than $2$  is $2^x-1$, an odd number. Thus the squarefree even factors of $\frac{n}{gcd(n,c(0))}$ larger than $2$ contribute an odd number of even length cycles to the cycle decomposition of the permutation $\pi$. To complete the count of the number of cycles of even length in the cycle decomposition of ShiftRows, we must still consider $\frac{N(2)}{2}$. By equation (\ref{mobiuseq}),
\begin{eqnarray*}
  \frac{N(2)}{2} =  \frac{2\mu(1) + 2^{2}\mu(1)}{2}=\frac{2(2-1)}{2}=1.             
\end{eqnarray*}
In conclusion we find that for even $n>2$ the function $\pi$ is an even permutation. For $n=2$, $m=1$, $r=1$ and $c$ odd, the $\pi$ permutation has one $2$-cycle,  and two fixed points, and is thus an odd permutation.
\end{proof}

\begin{lemma}\label{SRlinear}
The function $\pi$ is a linear transformation of the vector space $M_{m,n}(\GF(p^r))$ over the field $\GF(p^r)$. 
\end{lemma}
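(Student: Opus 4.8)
The plan is to verify directly that $\pi$ respects the two vector space operations on $M_{m,n}(\GF(p^r))$, namely entrywise addition and scalar multiplication by elements of $\GF(p^r)$. The key observation is that, by definition, $\pi$ merely relocates the entries of the input matrix: the $(i,j)$-entry of $\pi(a)$ is $a_{i\,((j-c(i))\bmod n)}$, so $\pi$ permutes the coordinate positions of the matrix without ever altering or combining the field values stored in those positions. Any such coordinate permutation is automatically linear, and the proof just makes this precise.

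First I would fix $a, a' \in M_{m,n}(\GF(p^r))$ and scalars $s, s' \in \GF(p^r)$ and compute the $(i,j)$-entry of $\pi(s a + s' a')$. Using the definition of $\pi$ together with the entrywise definitions of addition and scalar multiplication, this entry equals $(s a + s' a')_{i\,((j-c(i))\bmod n)} = s\,a_{i\,((j-c(i))\bmod n)} + s'\,a'_{i\,((j-c(i))\bmod n)}$. Then I would recognize the right-hand side as $s$ times the $(i,j)$-entry of $\pi(a)$ plus $s'$ times the $(i,j)$-entry of $\pi(a')$, i.e. the $(i,j)$-entry of $s\,\pi(a) + s'\,\pi(a')$. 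Since $i$ and $j$ were arbitrary, $\pi(s a + s' a') = s\,\pi(a) + s'\,\pi(a')$, which is exactly $\GF(p^r)$-linearity. Alternatively, one can observe that with respect to the standard basis of $M_{m,n}(\GF(p^r))$ the map $\pi$ is represented by a permutation matrix over $\GF(p^r)$, hence is linear for that reason.

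There is essentially no real obstacle here; the only point requiring care is the index bookkeeping with $(j-c(i))\bmod n$, ensuring that the same permuted index is applied to $a$, to $a'$, and to their linear combination. This is immediate because the shift amount $c(i)$ depends only on the row index $i$ and not on the entries being permuted, so the computation above goes through verbatim. (This lemma will presumably be used later to combine the parity information about $\pi$ with its linear structure when analyzing primitivity of the group generated by the round functions.)
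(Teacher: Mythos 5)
Your proof is correct. It is worth noting, though, that it proceeds a bit differently from the paper's argument: you verify $\GF(p^r)$-linearity directly on entries, computing the $(i,j)$-entry of $\pi(sa+s'a')$ and observing that the shifted index $(j-c(i))\bmod n$ depends only on the position, not on the stored values, so the map is a coordinate permutation and hence linear (equivalently, a permutation matrix in the standard basis). The paper instead decomposes $M_{m,n}(\GF(p^r))$ as a direct sum of its row spaces $V_1\oplus\cdots\oplus V_m$ with each $V_i\cong M_{1,n}(\GF(p^r))$, exhibits for each row an explicit matrix $C_i$ (obtained by letting the cyclic shift act on the columns of the identity matrix) such that the $i$-th row permutation is $x\mapsto x\cdot C_i$, and then writes $\pi$ as the direct sum $C_1\oplus\cdots\oplus C_m$ of these linear maps. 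Your route is shorter and more general, since it shows at once that any relocation of coordinates is linear; the paper's route buys an explicit matrix representation of $\pi$ and a row-wise direct-sum picture that matches the decomposition-style arguments used elsewhere in the paper (e.g.\ in the primitivity analysis). Either way the conclusion $\pi(sa+s'a')=s\,\pi(a)+s'\,\pi(a')$ is established, so your argument is a valid proof of the lemma.
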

\begin{proof}
We use the ideas in the proof of Lemma \ref{SRlemma}. The space $M_{m,n}(GF(p^r))$ can be viewed as a direct sum 
\[
V=V_1 \oplus \cdots \oplus V_{m}.
\]
where each $V_i$ is $M_{1,n}(\GF(p^r))$, the space of $n\times 1$ row vectors over the field $\GF(p^r)$. 

Now consider $V_i$ as the space of $i$-th rows of members of $M_{m,n}(\GF(p^r)$. With $c(1),\cdots,c(n)$ defined as before, a single application of the $\pi$ permutation maps the $i$-th row
\[
  (x_0,\cdots,x_{n-1})\mapsto (x_{n-c(i)+0 \,mod\, n}, \cdots, x_{n-c(i) + n-1 \,mod\, n})
\]
Define from the $n\times n$ identity matrix the matrix $C_i$ by letting this mapping act on each of the columns of the identity matrix as if it were the $i$-th row. As the reader could verify, this matrix $C_i$ has the property that
\[
  \lbrack x_0,\cdots,x_{n-1}\rbrack \cdot C_i = \lbrack x_{n-c(i)+0 \,mod\, n}, \cdots, x_{n-c(i) + n-1 \,mod\, n} \rbrack.
\]
Note that $C_i$ is a linear transformation of the vector space $M_{1,n}(\GF(p^r))$ over the field $\GF(p^r)$.

Now the  function $\pi$ on $M_{m,n}(\GF(p^r))$ can be viewed as the direct sum of $C_1\oplus\cdots\oplus C_m$, where for $v_1+v_2+\cdots+v_m \in V$ we have
\[
  C_1\oplus\cdots \oplus C_m(v_1+\cdots+v_m) = C_1\cdot v_1 + \cdots + C_m\cdot v_m,
\]
which is a linear transformation on $M_{m,n}(\GF(p^r))$.
\end{proof}

\subsection{{\bf Analysis of the MixColumns-like function}  ($\rho$-function) }

\begin{definition} 
Let $\rho: M_{m,n}(\GF(p^r))\rightarrow M_{m,n}(\GF(p^r))$ is mapping defined as the parallel application of $n$ ``column" mappings $\rho_j:M_{m,1}(\GF(p^r)) \to M_{m,1}(\GF(p^r))$ defined by $\rho(a)=b$ if and only if $b_j=\rho_j(a_j)$ for all $0 \le j < n$, where each $\rho_j$ is given by $\rho_j(x)=C\cdot x$ for all $x \in M_{m,1}(\GF(p^r))$, where $C\in M_{m,m}(\GF(p^r)) $  is an invertible diffusion matrix. 
\end{definition}
\begin{lemma}\label{MClinear} The function $\rho$  is a linear transformation of $M_{m,n}(\GF(p^r))$.
\end{lemma}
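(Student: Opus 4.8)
The plan is to mimic the proof of Lemma \ref{SRlinear}, decomposing the state space into the direct sum of its columns. First I would write
\[
  M_{m,n}(\GF(p^r)) = W_1 \oplus \cdots \oplus W_n,
\]
where each $W_j$ is a copy of $M_{m,1}(\GF(p^r))$, namely the space of $j$-th columns of members of $M_{m,n}(\GF(p^r))$. This is a direct-sum decomposition of $\GF(p^r)$-vector spaces, and the scalar action of $\GF(p^r)$ on the state space respects it entrywise, so an endomorphism that acts independently on each $W_j$ is linear precisely when its restriction to each $W_j$ is linear.

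Next I would observe that for each $j$ the column map $\rho_j(x) = C\cdot x$ is a linear transformation of $M_{m,1}(\GF(p^r))$ over $\GF(p^r)$: left multiplication by the fixed matrix $C\in M_{m,m}(\GF(p^r))$ satisfies $C\cdot(x+y) = C\cdot x + C\cdot y$ and $C\cdot(\alpha x) = \alpha(C\cdot x)$ for all $x,y\in M_{m,1}(\GF(p^r))$ and all $\alpha\in\GF(p^r)$, by the distributive law in $\GF(p^r)$ and the commutativity of its multiplication. Thus each $\rho_j$ is a $\GF(p^r)$-linear endomorphism of the summand $W_j$.

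Finally, since the definition $\rho(a)=b \iff b_j=\rho_j(a_j)$ for all $0\le j<n$ says exactly that $\rho$ acts as $\rho_j$ on the $j$-th summand and leaves the decomposition untouched, we have $\rho = \rho_1\oplus\cdots\oplus\rho_n$; explicitly, for $v_1+\cdots+v_n\in W_1\oplus\cdots\oplus W_n$,
\[
  \rho(v_1+\cdots+v_n) = C\cdot v_1 + \cdots + C\cdot v_n,
\]
which is a linear transformation of $M_{m,n}(\GF(p^r))$ because a direct sum of linear maps is linear. I expect no genuine obstacle here; the only point requiring a line of care is the identification of the column-wise description of $\rho$ in the definition with the direct-sum action, and this is immediate. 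The argument parallels the proof of Lemma \ref{SRlinear} almost verbatim, with columns in place of rows and the single fixed diffusion matrix $C$ in place of the permutation matrices $C_i$.
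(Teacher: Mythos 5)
Your proof is correct, and it is exactly the argument the paper intends: the paper leaves Lemma \ref{MClinear} without proof as the routine column-wise analogue of Lemma \ref{SRlinear}, and your decomposition $M_{m,n}(\GF(p^r)) = W_1 \oplus \cdots \oplus W_n$ with $\rho = \rho_1 \oplus \cdots \oplus \rho_n$, each $\rho_j$ being left multiplication by the fixed matrix $C$, is precisely that argument. No gaps.
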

\begin{lemma}\label{MClemma}
Let $C\in M_{m,m}(\GF(p^r))$ be an invertible diffusion matrix and $n>1$.  Then the  function $\rho$ is an odd permutation if and only if  $p$, $n$, and $\frac{p^{rm}- 1}{\vert\langle C \rangle\vert}$ are odd.
\end{lemma}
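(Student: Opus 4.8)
The plan is to reduce the parity of $\rho$ to the parity of the single-column map and then read off the answer from its cycle structure. Write $T_C\colon M_{m,1}(\GF(p^r))\to M_{m,1}(\GF(p^r))$, $x\mapsto C\cdot x$, for the permutation of the $p^{rm}$-element set $M_{m,1}(\GF(p^r))$ induced by $C$; it is a permutation because $C$ is invertible. Since $\rho$ applies $T_C$ to each of the $n$ columns independently, it factors as $\rho=\sigma_1\circ\cdots\circ\sigma_n$, where $\sigma_j$ applies $T_C$ to the $j$-th column and fixes all other entries, and these factors commute. Fixing the other $n-1$ columns in each of the $p^{rm(n-1)}$ possible ways, $\sigma_j$ is a disjoint union of $p^{rm(n-1)}$ copies of $T_C$, so the number of even-length cycles of $\sigma_j$ equals $p^{rm(n-1)}$ times the number $e$ of even-length cycles of $T_C$. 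Since the sign of a permutation is $(-1)$ raised to the number of its even-length cycles and the sign map is a homomorphism, $\operatorname{sgn}(\rho)=(-1)^{\,n\,p^{rm(n-1)}\,e}$.

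I would then split on $p$. If $p=2$ then $p^{rm(n-1)}$ is even (here the hypothesis $n>1$ is essential), so $\operatorname{sgn}(\rho)=1$ and $\rho$ is even; this is consistent with the assertion, whose right-hand side requires $p$ odd. If $p$ is odd then $p^{rm(n-1)}$ is odd, so $\operatorname{sgn}(\rho)=(-1)^{ne}$, and hence $\rho$ is odd if and only if $n$ and $e$ are both odd. It then remains to show that, for $p$ odd, $e$ is odd if and only if $\tfrac{p^{rm}-1}{\vert\langle C\rangle\vert}$ is odd.

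For this I would invoke the cycle-structure analysis of multiplication by a matrix carried out earlier in this section, in particular equation (\ref{MCeq}): $T_C$ fixes $0$ and splits the remaining $p^{rm}-1$ nonzero elements of $M_{m,1}(\GF(p^r))$ into $\tfrac{p^{rm}-1}{\vert\langle C\rangle\vert}$ cycles, each of length $\vert\langle C\rangle\vert$. Thus every non-trivial cycle of $T_C$ has the same length $\vert\langle C\rangle\vert$, so $e=\tfrac{p^{rm}-1}{\vert\langle C\rangle\vert}$ when $\vert\langle C\rangle\vert$ is even and $e=0$ when $\vert\langle C\rangle\vert$ is odd. In the latter case $p^{rm}-1$ is even and $\vert\langle C\rangle\vert$ is odd, so $\tfrac{p^{rm}-1}{\vert\langle C\rangle\vert}$ is even; hence in both cases ``$e$ odd'' is equivalent to ``$\tfrac{p^{rm}-1}{\vert\langle C\rangle\vert}$ odd''. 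Putting this together with the previous paragraph, $\rho$ is odd exactly when $p$, $n$, and $\tfrac{p^{rm}-1}{\vert\langle C\rangle\vert}$ are all odd, which is the claim.

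The step I expect to be the crux is the cycle-structure statement for $T_C$: it says that the cyclic group $\langle C\rangle$ acts on $M_{m,1}(\GF(p^r))\setminus\{0\}$ with every orbit of full length $\vert\langle C\rangle\vert$, equivalently that $\det(C^d-I)\neq 0$ for every proper divisor $d$ of $\vert\langle C\rangle\vert$, and this is precisely where the structural properties of the diffusion matrix $C$ (encapsulated in equation (\ref{MCeq})) are used — for an arbitrary invertible matrix the orbit lengths may be proper divisors of $\vert\langle C\rangle\vert$. If one wanted to avoid that input, the fallback would be to count the cycles of $T_C$ of each length by M\"obius inversion over the divisors of $\vert\langle C\rangle\vert$ (using that each $\vert\ker(C^d-I)\vert$ is a power of $p$, hence odd when $p$ is odd) and then track $2$-adic valuations exactly as in the proof of Lemma \ref{SRlemma}.
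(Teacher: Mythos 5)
Your proof is correct and follows essentially the same route as the paper's: decompose $\rho$ into the $n$ commuting single-column permutations, observe that each is $p^{rm(n-1)}$ disjoint copies of $x\mapsto Cx$, and read off the parity from the count of cycles of length $\vert\langle C\rangle\vert$ as in equation (\ref{MCeq}). Your explicit flagging of the crux — that every nontrivial orbit of $\langle C\rangle$ on nonzero columns has full length $\vert\langle C\rangle\vert$ and $\mathbf{0}$ is the only fixed point — is exactly the structural property of the diffusion matrix that the paper asserts without further justification, so the two arguments rest on the same input.
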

\begin{proof}
Consider the function $\rho$ as a composition of $n$ permutations $\rho_j$, each of which multiplies the $j$\textsuperscript{th} column by the invertible $m\times m$ matrix $C$ over $\GF(p^r)$ and fixes the other $n-1$ columns.  Fix $j\in\naturals$. Then $\rho_j$ produces cycles of length $\vert\langle C \rangle\vert$.  Of the $p^{rm}$ possible states of the $j$\textsuperscript{th} column all but the fixed points of $C$, which is only the all-$0$ column, are members of cycles.  Note that for any state of the $j$\textsuperscript{th} column, there correspond $p^{rm(n-1)}$ states of the entire matrix.  Therefore, over $M_{m,n}(\GF(p^{r}))$, the permutation $\rho_j$ consists of 
\begin{equation}\label{MCeq}
\frac{p^{rm(n-1)} (p^{rm}-1)} {\vert\langle C \rangle\vert}
\end{equation}
cycles of length $\vert\langle C \rangle\vert$.  This number of cycles is odd if and only if $p$ is odd and $\frac{p^{rm}-1}{\vert\langle C \rangle\vert}$ is odd (in this case $\vert\langle C \rangle\vert$ is even). Note that for only an odd number of $\rho_j$'s would their composition then be an odd permutation, meaning $n$ must be odd. 
\end{proof}
Note that for $p=2$ the $\rho$ function is odd if and only if $n=1$. Additionally, for $n=1$ and $p>2$ the $\rho$ function is odd if and only if  $\frac{p^{rm}-1}{\vert\langle C \rangle\vert}$ is odd.

\subsection{{\bf Analysis of the generalized Rijndael-like round functions} }

\begin{definition}
Let $m,n,r>0$ be natural numbers and $k\in \KK$. The mapping $T{[k]}: M_{m,n}(\GF(p^r))\rightarrow M_{m,n}(\GF(p^r))$ defined as $T[k]=\sigma[k] \circ \rho \circ \pi \circ \lambda$ is called a generalized Rijndael-like round function.
\end{definition}
\begin{corollary}
Let $p$ be an odd prime. For each $k\in\KK$, the generalized Rijndael-like round function $T{[k]}$ is an odd permutation if and only if exactly one of  the functions  $\lambda$, $\rho$, and $\pi$ is  odd.
\end{corollary}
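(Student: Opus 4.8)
The plan is to use that the sign map $\mathrm{sgn}$ on the symmetric group of the state space $M_{m,n}(\GF(p^r))$ is a group homomorphism onto $\{\pm 1\}$, so that the parity of a composition is the sum modulo $2$ of the parities of its factors. Since $T[k]=\sigma[k]\circ\rho\circ\pi\circ\lambda$, I would first record
\[
  \mathrm{parity}(T[k]) \equiv \mathrm{parity}(\sigma[k]) + \mathrm{parity}(\rho) + \mathrm{parity}(\pi) + \mathrm{parity}(\lambda) \pmod 2 .
\]
Because $p$ is an odd prime, Lemma \ref{ARKlemma}(1) says $\sigma[k]$ is always an even permutation, so the first summand vanishes and $T[k]$ is an odd permutation precisely when an odd number of the three permutations $\lambda$, $\pi$, $\rho$ are odd; that is, when exactly one of them is odd or when all three of them are odd.

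The second step is to eliminate the possibility that all three are odd. Here I would appeal to the necessary conditions already proved: by Lemma \ref{SRlemma}, $\pi$ can be an odd permutation only when $n$ is even, while by Lemma \ref{SBlemma}, $\lambda$ can be an odd permutation only when $m$ and $n$ are both odd. The requirement that $n$ be even and the requirement that $n$ be odd are incompatible, so $\lambda$ and $\pi$ can never be simultaneously odd; a fortiori $\lambda$, $\pi$, $\rho$ cannot all be odd at once. (One may also note, using Lemma \ref{MClemma}, that $\rho$ and $\pi$ cannot both be odd either, and that for $n=1$ the function $\pi$ is the identity and hence even, so no degenerate case is lost; but the $\lambda$--$\pi$ clash already suffices.)

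Combining the two steps, the clause ``an odd number of $\lambda,\pi,\rho$ are odd'' collapses to ``exactly one of $\lambda,\pi,\rho$ is odd,'' and by the displayed parity formula this is equivalent to $T[k]$ being an odd permutation, which is the assertion of the corollary. I do not expect a genuine obstacle: the argument is essentially bookkeeping with a $\mathbb{Z}/2$-valued homomorphism. The one point that needs care — and the reason the statement says ``exactly one'' rather than merely ``an odd number'' — is the observation in the second step that the parity conditions on $n$ forced by $\lambda$ and by $\pi$ are mutually exclusive.
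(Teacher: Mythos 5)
Your proposal is correct and takes essentially the same route as the paper: both reduce to parity bookkeeping via the sign homomorphism, use that $\sigma[k]$ is even for odd $p$ (Lemma \ref{ARKlemma}), and then exclude the case that all three of $\lambda$, $\pi$, $\rho$ are odd by appealing to Lemmas \ref{SBlemma}, \ref{SRlemma} and \ref{MClemma}. Your explicit remark that $\lambda$ odd forces $n$ odd while $\pi$ odd forces $n$ even simply spells out the incompatibility the paper cites implicitly, so there is no substantive difference.
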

\begin{proof}
By Lemma \ref{ARKlemma} each $\sigma[k]$ is an even permutation. By the definition the function $T[k]$ is odd if and only if each of $\lambda$, $\rho$ and $\pi$ is odd, or else exactly one of these three functions is odd. By Lemmas  \ref{SBlemma}, \ref{SRlemma} and \ref{MClemma} these three functions cannot simultaneously be of the same parity.
\end{proof}
\begin{corollary}\label{n>2}
For $n>2$  the Rijndael-like round function $T{[k]}: M_{m,n}(\GF(2^r))\rightarrow M_{m,n}(\GF(2^r))$ is an even permutation. 
\end{corollary}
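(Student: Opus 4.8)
The plan is to decompose $T[k]$ into its four component maps via the definition $T[k] = \sigma[k]\circ\rho\circ\pi\circ\lambda$ and to argue that, under the hypothesis $n>2$, each of the four factors is individually an even permutation; since the even permutations form a group (the alternating group $\alt_n$ on the relevant state space), the composition is then even as well.

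First I would handle $\sigma[k]$: by part (2) of Lemma \ref{ARKlemma}, in characteristic $2$ the map $\sigma[k]$ is even precisely when $r\cdot m\cdot n>1$, and since $n>2$ forces $rmn\geq 3$, this factor is even. Next, for $\lambda$ I would invoke Lemma \ref{SBlemma}: with $p=2$ the map $\lambda$ is odd only when $r\geq 2$ \emph{and} $m\cdot n=1$; as $n>2$ gives $mn>1$, the second condition fails, so $\lambda$ is even. For $\pi$ I would apply Lemma \ref{2isp}, which states that over $\GF(2^r)$ the map $\pi$ is odd only when $m\cdot r\cdot \gcd(n,c(0))=1$ and $n=2$; since $n>2$ rules out $n=2$, the map $\pi$ is even. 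Finally, for $\rho$ I would use the remark immediately following Lemma \ref{MClemma}, namely that for $p=2$ the map $\rho$ is odd if and only if $n=1$; as $n>2\neq 1$, the map $\rho$ is even.

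Having established that all four of $\sigma[k]$, $\rho$, $\pi$, $\lambda$ are even permutations of $M_{m,n}(\GF(2^r))$, I would conclude that their composition $T[k]$ is an even permutation, completing the proof. I do not anticipate any genuine obstacle here: the argument is purely a bookkeeping assembly of the parity lemmas already proved in this section, and the only point requiring minor care is checking that the numerical hypothesis $n>2$ simultaneously defeats every one of the (narrow) conditions under which the individual factors could be odd — which it does, since those conditions each require either $n=1$, $n=2$, or $mn=1$.
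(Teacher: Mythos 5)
Your proof is correct and follows exactly the route the paper intends for this corollary: it is a direct assembly of the parity results, with Lemma \ref{ARKlemma}(2), Lemma \ref{SBlemma}, Lemma \ref{2isp}, and the remark after Lemma \ref{MClemma} each showing the corresponding factor of $T[k]=\sigma[k]\circ\rho\circ\pi\circ\lambda$ is even once $n>2$. Your checks of how $n>2$ defeats each oddness condition are accurate, so nothing further is needed.
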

\begin{corollary} \label{n=2}
The Rijndael-like round function $T{[k]}: M_{m,2}(\GF(2^r))\rightarrow M_{m,2}(\GF(2^r))$ is an even permutation if and only is $\pi$ is even. 
\end{corollary}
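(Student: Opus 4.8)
The plan is to exploit the factorization $T[k] = \sigma[k]\circ\rho\circ\pi\circ\lambda$ together with the multiplicativity of the sign map $\mathrm{sgn}\colon\sym_{M_{m,2}(\GF(2^r))}\to\{+1,-1\}$. Since $\mathrm{sgn}(T[k]) = \mathrm{sgn}(\sigma[k])\cdot\mathrm{sgn}(\rho)\cdot\mathrm{sgn}(\pi)\cdot\mathrm{sgn}(\lambda)$, it suffices to show that, under the standing hypotheses $p=2$ and $n=2$, each of the three factors $\sigma[k]$, $\rho$, and $\lambda$ is an even permutation; then $\mathrm{sgn}(T[k]) = \mathrm{sgn}(\pi)$ and the claim follows.

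First I would check $\sigma[k]$. Because $n = 2$, we have $r\cdot m\cdot n \ge 2 > 1$, so part (2) of Lemma \ref{ARKlemma} gives that $\sigma[k]$ is even for every $k \in \KK$. Next I would check $\lambda$. Again because $n = 2$ we have $m\cdot n \ge 2 \ne 1$, and Lemma \ref{SBlemma} asserts that for $p = 2$ the permutation $\lambda$ is odd only when $m\cdot n = 1$; hence $\lambda$ is even. Finally I would check $\rho$. Since $n = 2 > 1$, Lemma \ref{MClemma} applies and says $\rho$ is odd if and only if $p$, $n$, and $(p^{rm}-1)/|\langle C\rangle|$ are all odd; as $p = 2$ is not odd, $\rho$ is even (the remark following Lemma \ref{MClemma} records the same conclusion). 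Combining the three, $\mathrm{sgn}(T[k]) = 1\cdot 1\cdot \mathrm{sgn}(\pi)\cdot 1 = \mathrm{sgn}(\pi)$, so $T[k]$ is even precisely when $\pi$ is even.

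I expect no genuine obstacle: this corollary is essentially a bookkeeping consequence of the parity computations already carried out in Lemmas \ref{ARKlemma}, \ref{SBlemma}, and \ref{MClemma}. The only points requiring a moment's care are, first, confirming that the hypotheses of those lemmas are actually satisfied when $n = 2$ — in particular the hypothesis $n > 1$ needed for Lemma \ref{MClemma} — which they are; and second, noting that the equivalence is not vacuous, since by Lemma \ref{2isp} the function $\pi$ can indeed be odd when $m\cdot r\cdot\gcd(n,c(0)) = 1$ and $n = 2$. Thus Corollary \ref{n=2} genuinely sharpens Corollary \ref{n>2} by isolating the one remaining way a Rijndael-like round function over $\GF(2^r)$ can fail to be even.
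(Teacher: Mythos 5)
Your proof is correct and follows exactly the route the paper intends: the corollary is a direct parity bookkeeping consequence of Lemmas \ref{ARKlemma}, \ref{SBlemma}, and \ref{MClemma} (with $p=2$, $n=2$ forcing $\sigma[k]$, $\lambda$, and $\rho$ to be even), so $\mathrm{sgn}(T[k])=\mathrm{sgn}(\pi)$. The hypothesis checks you flag (in particular $n>1$ for Lemma \ref{MClemma}) and the non-vacuousness remark via Lemma \ref{2isp} are exactly the right points of care.
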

\begin{corollary} \label{n=1}
The Rijndael-like round function $T{[k]}: M_{m,1}(\GF(2^r))\rightarrow M_{m,1}(\GF(2^r))$ is an even permutation if and only if $\sigma[k]$ is odd or $\lambda$ is odd. 
\end{corollary}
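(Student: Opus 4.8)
The plan is to reduce the statement to a parity count among the four factors of $T[k]=\sigma[k]\circ\rho\circ\pi\circ\lambda$: since parity is multiplicative under composition, $T[k]$ is even exactly when an even number of $\sigma[k],\rho,\pi,\lambda$ are odd permutations. First I would handle $\pi$ and $\rho$, which do not depend on the key. For $n=1$ a single-entry row cannot be shifted nontrivially, so $\pi$ is the identity and in particular even; equivalently, this is the ``otherwise'' case of Lemma \ref{2isp} since $n\neq 2$. For $\rho$ I would invoke the remark immediately following Lemma \ref{MClemma}: when $p=2$ and $n=1$ the map $\rho$ is odd. Hence the parity of $T[k]$ equals $[\sigma[k]\text{ is odd}]+[\lambda\text{ is odd}]+1$ modulo $2$, so $T[k]$ is even if and only if exactly one of $\sigma[k]$ and $\lambda$ is odd.

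Next I would show that $\sigma[k]$ and $\lambda$ cannot both be odd, which lets me replace ``exactly one is odd'' by ``at least one is odd''. By Lemma \ref{ARKlemma}(2), with $n=1$ the permutation $\sigma[k]$ is odd only when $r\cdot m\cdot n=1$, forcing $r=m=1$; by Lemma \ref{SBlemma}, with $n=1$ the permutation $\lambda$ is odd only when $m=1$ and $r\geq 2$. Since the first requires $r=1$ and the second $r\geq 2$, at most one of the two is odd. Combining this with the previous paragraph yields exactly the assertion of the corollary: $T[k]$ is even if and only if $\sigma[k]$ is odd or $\lambda$ is odd.

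I expect the only nontrivial point to be the behaviour of $\rho$ in the degenerate regime $n=1$. For $n>1$ the parity of $\rho$ is governed by the clean formula in Lemma \ref{MClemma}, whose argument uses the factor $p^{rm(n-1)}$; when $n=1$ this factor equals $1$, so the oddness of $\rho$ over $M_{m,1}(\GF(2^r))$ has to be obtained directly from the cycle structure of multiplication by the diffusion matrix $C$, as recorded in the remark after Lemma \ref{MClemma}. Once that input is in hand, the rest of the proof is a routine assembly of the parities already established in Lemmas \ref{ARKlemma}, \ref{SBlemma} and \ref{2isp}.
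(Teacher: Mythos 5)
Your proposal is correct and is essentially the argument the paper intends: the corollary is stated without an explicit proof, and it follows exactly as you assemble it, from the parity of composition together with Lemma \ref{ARKlemma}, Lemma \ref{SBlemma}, Lemma \ref{2isp} (the ``otherwise'' case, since $n=1\neq 2$), and the remark after Lemma \ref{MClemma} giving that $\rho$ is odd when $p=2$, $n=1$. Your observation that $\sigma[k]$ and $\lambda$ cannot both be odd (so ``exactly one odd'' becomes ``at least one odd'') is precisely the point needed to match the stated form of the corollary.
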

Note that when $n=1$ and $m=2$ the Rijndael-like round function $T{[k]}$ is an odd permutation.
\begin{definition}
Let $m,\, n,\, r>0$ be natural numbers and $k \in \KK$.  For $s>1$ and  $2 \leq i \leq s$  the  mapping $T_s[k]: M_{m,n}(\GF(p^r))\rightarrow M_{m,n}(\GF(p^r))$ defined as  
\[
T_s[k] = \sigma[k_{s+1}] \circ \pi \circ \lambda \circ (\sigma[k_s] \circ \rho \circ \pi \circ \lambda)\circ\cdots \circ  (\sigma[k_2] \circ \rho \circ \pi \circ \lambda)\circ \sigma[k_1]
\]
where $\{k_i: 1\leq i\leq s\}$ is the set of subkeys produced by the key $k$ is called $s$-round generalized Rijndael-like function.
\end{definition}
The AES  as well as the actual Rijndael \cite {DRB} are  special $s$-round Rijndael-like functions for $m = n = 4$,  $r =8$ , $p=2$ and $s=10,\,12,\, \mbox{or } 14$ (depending on key size). \\
\begin{theorem}\emph {\cite{SW}} \label{SWtheorem}
Let $mn>2$ and $r\geq 2$ be natural numbers. Then the  $s$-round Rijndael-like function
\[
T_s[k]: M_{m,n}(\GF(2^r))\rightarrow M_{m,n}(\GF(2^r))
\]
 is an even permutation.
\end{theorem}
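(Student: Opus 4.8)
The plan is to reduce the statement to the parity computations for the individual building blocks that have already been carried out, using the fact that the sign map $\mathrm{sgn}\colon \sym_{M_{m,n}(\GF(2^r))}\to\{+1,-1\}$ is a group homomorphism. First I would unwind the definition of $T_s[k]$ and regroup it in terms of the one-round function $T[k_i]=\sigma[k_i]\circ\rho\circ\pi\circ\lambda$ as
\[
T_s[k]=\sigma[k_{s+1}]\circ\pi\circ\lambda\circ\bigl(T[k_s]\circ T[k_{s-1}]\circ\cdots\circ T[k_2]\bigr)\circ\sigma[k_1];
\]
equivalently, by inspection of the definition, in $T_s[k]$ the map $\sigma[\cdot]$ occurs $s+1$ times, $\lambda$ and $\pi$ each occur $s$ times, and $\rho$ occurs $s-1$ times. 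Applying $\mathrm{sgn}$ then gives
\[
\mathrm{sgn}(T_s[k])=\Bigl(\prod_{i=1}^{s+1}\mathrm{sgn}(\sigma[k_i])\Bigr)\cdot\mathrm{sgn}(\lambda)^{\,s}\cdot\mathrm{sgn}(\pi)^{\,s}\cdot\mathrm{sgn}(\rho)^{\,s-1},
\]
so it is enough to show that each factor equals $+1$ under the hypotheses $mn>2$, $r\geq 2$ (and $p=2$).

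The second step is to quote the component lemmas. Since $r\geq 2$ and $mn\geq 3$ we have $r\cdot m\cdot n\geq 6>1$, so every $\sigma[k_i]$ is an even permutation by the $p=2$ case of Lemma~\ref{ARKlemma}. Since $mn\neq 1$, the map $\lambda$ is even by Lemma~\ref{SBlemma}. Since $r\geq 2$ forces $m\cdot r\cdot\gcd(n,c(0))>1$, the map $\pi$ is even by Lemma~\ref{2isp}. For $\rho$ I would split on $n$: if $n>2$ then each round function $T[k_i]$ is even by Corollary~\ref{n>2}; if $n=2$ (so $m\geq 2$) then each $T[k_i]$ is even by Corollary~\ref{n=2} together with the evenness of $\pi$ just noted; in either case the bracketed composition $T[k_s]\circ\cdots\circ T[k_2]$ is even. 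Alternatively, working with the multiplicities directly: the count (\ref{MCeq}) of cycles making up $\rho$ is a multiple of $2^{rm(n-1)}$ and hence even as soon as $n>1$, so $\mathrm{sgn}(\rho)=+1$ here. Substituting $+1$ for every factor in the displayed product yields $\mathrm{sgn}(T_s[k])=+1$, that is, $T_s[k]$ is even.

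The bulk of the difficulty is upstream of this argument: once the four parity lemmas are in hand, the theorem is essentially a bookkeeping consequence of multiplicativity of $\mathrm{sgn}$. The genuinely hard inputs are Lemma~\ref{2isp}, whose proof counts, via M\"obius inversion, the cycles of each length occurring in the ShiftRows-like permutation in characteristic $2$, and Lemma~\ref{SBlemma}, which dissects the S-box-like map into inversion, multiplication by $A$, and translation and tracks the parity contributed by each. The one subtlety in the deduction itself is the asymmetric shape of $T_s[k]$: the outermost round carries a $\pi\circ\lambda$ but no $\rho$, and there is one extra key addition, so the exponents $s+1$, $s$, $s$, $s-1$ must be read off exactly rather than assumed equal — an error here would matter precisely when $\mathrm{sgn}(\rho)$ or $\mathrm{sgn}(\pi)$ is nontrivial, which is why the hypothesis $r\geq 2$ (forcing $\pi$ even) is needed in addition to $mn>2$.
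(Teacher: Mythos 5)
Your overall strategy --- multiplicativity of the sign character, the exact multiplicities $s+1$, $s$, $s$, $s-1$ of $\sigma$, $\lambda$, $\pi$, $\rho$ in $T_s[k]$, and the component parity lemmas --- is the natural one, and it is essentially how the paper deduces its own later variant for $n>2$ (the paper gives no proof of this particular theorem, which it cites from \cite{SW}). But there is a genuine gap: the hypothesis $mn>2$ does not force $n\geq 2$; it also admits $n=1$ with $m\geq 3$. Your treatment of $\rho$ splits only into the cases $n>2$ and $n=2$, and your alternative argument via the cycle count (\ref{MCeq}) is, as you yourself note, valid only ``as soon as $n>1$'', so the case $n=1$ is never handled. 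Moreover this case cannot be patched by the results you quote: Corollaries \ref{n>2} and \ref{n=2} do not apply, and the remark following Lemma \ref{MClemma} asserts that for $p=2$ the function $\rho$ is odd exactly when $n=1$; taken at face value, your product formula would then give $\mathrm{sgn}(T_s[k])=\mathrm{sgn}(\rho)^{s-1}=(-1)^{s-1}$, i.e., an odd permutation whenever $s$ is even, which would contradict the theorem rather than prove it.

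To close the gap you need a separate parity argument for $\rho$ when $n=1$ and $p=2$. The cleanest is: $\rho$ is then a single invertible $\GF(2)$-linear map on a space of $\GF(2)$-dimension $rm\geq 6$, and every element of $\mathrm{GL}_k(\GF(2))$ with $k\geq 3$ acts as an even permutation of the underlying vector space (it is a product of transvections, and a transvection fixes a hyperplane and consists of $2^{k-2}$ transpositions, an even number for $k\geq 3$); hence $\mathrm{sgn}(\rho)=+1$ in this case too, $\pi$ is the identity for $n=1$, and your bookkeeping then yields evenness in the remaining case. (Incidentally, this shows the paper's $n=1$ remark is not even consistent with its own count in Lemma \ref{MClemma}: if all nonzero columns lay in cycles of length $\vert\langle C\rangle\vert$, that length would divide the odd number $2^{rm}-1$, so all cycles would have odd length and $\rho$ would be even.) With that supplement, and with your correct reading of the exponents $s+1,s,s,s-1$, the proof is complete; without it, the stated hypotheses are not fully covered.
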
 
Using corollaries \ref{n>2}, \ref{n=2} and \ref{n=1} we have the following generalization  of the theorem above.
\begin{theorem}
For $n>2$ the $s$-round Rijndael-like function
\[
T_s[k]: M_{m,n}(\GF(2^r))\rightarrow M_{m,n}(\GF(2^r))
\]
 is an even permutation.
\end{theorem}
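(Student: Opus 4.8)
The plan is to expand $T_s[k]$ into the composition of its elementary component maps and to verify, using the parity lemmas of Section 3, that every such component is an even permutation once $n>2$; the conclusion is then immediate, since a composition of even permutations is even.

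First I would recall that
\[
T_s[k] = \sigma[k_{s+1}] \circ \pi \circ \lambda \circ (\sigma[k_s] \circ \rho \circ \pi \circ \lambda)\circ\cdots \circ (\sigma[k_2] \circ \rho \circ \pi \circ \lambda)\circ \sigma[k_1],
\]
so that $T_s[k]$ is a composition of the maps $\sigma[k_1],\dots,\sigma[k_{s+1}]$ together with $s$ copies of $\pi$, $s$ copies of $\lambda$, and $s-1$ copies of $\rho$. Then I would dispatch the four kinds of factor in turn. By Lemma \ref{ARKlemma}(2), for $p=2$ the map $\sigma[k]$ is even whenever $rmn>1$; since $n>2$ forces $rmn\ge n>1$, every $\sigma[k_j]$ is even. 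By Lemma \ref{SBlemma}, for $p=2$ the map $\lambda$ is odd only when $r\ge 2$ and $mn=1$; since $n>2$ gives $mn\ge 3$, $\lambda$ is even. By Lemma \ref{2isp}, for $p=2$ the map $\pi$ is odd only when $n=2$, so $\pi$ is even here. Finally, by Lemma \ref{MClemma} and the remark immediately following its proof, for $p=2$ the map $\rho$ is odd only when $n=1$, so $\rho$ is even here as well.

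With every component even, $T_s[k]$ is even, which is the assertion. Equivalently, one may instead group the factors so that each block $\sigma[k_i]\circ\rho\circ\pi\circ\lambda$ with $2\le i\le s$ is a full generalized Rijndael-like round function, hence even by Corollary \ref{n>2}, while the two boundary factors $\sigma[k_{s+1}]\circ\pi\circ\lambda$ and $\sigma[k_1]$ are even by the observations on $\sigma$, $\pi$, and $\lambda$ above.

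I do not expect a genuine obstacle: the content is a careful reduction to the parity computations already performed, and the only point requiring attention is the asymmetry of the last round, which omits the MixColumns-like map $\rho$. Because of this one cannot invoke Corollary \ref{n>2} for all of $T_s[k]$ at once but must separately check that $\pi\circ\lambda$ together with the trailing key addition $\sigma[k_{s+1}]$ is even when $n>2$; once that is in place the proof is a one-line bookkeeping of parities.
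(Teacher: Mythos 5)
Your proof is correct and follows essentially the same route as the paper, which derives this theorem directly from the component parity results (the paper cites its corollaries on the one-round functions and leaves the bookkeeping implicit). Your explicit check that the final partial round $\sigma[k_{s+1}]\circ\pi\circ\lambda$, which omits $\rho$, is still even for $n>2$ is exactly the detail the paper glosses over, and it is handled correctly by the same lemmas on $\sigma$, $\lambda$, and $\pi$.
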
 
\begin{corollary}
The  $s$-round Rijndael-like function $T_s[k]: M_{m,2}(\GF(2^r))\rightarrow M_{m,2}(\GF(2^r))$ is an even permutation if and only if $\pi$ is odd and $s$ is even or $\pi$ is even.
\end{corollary}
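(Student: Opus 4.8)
The plan is to reduce the parity of $T_s[k]$ to the parity of $\pi$ using the fact that the sign (parity) map is a group homomorphism $\sym_N\to\ZZ/2\ZZ$ (with $N=\vert M_{m,2}(\GF(2^r))\vert$), so that the parity of a composition is the sum modulo $2$ of the parities of its factors. First I would specialize the component lemmas to the case $p=2$, $n=2$. By Lemma \ref{ARKlemma}(2), since $r\cdot m\cdot n=2rm>1$, every $\sigma[k_i]$ is an even permutation. By Lemma \ref{SBlemma}, since $m\cdot n\geq 2\neq 1$, the function $\lambda$ is even. By Lemma \ref{MClemma}, which applies because $n=2>1$, the function $\rho$ is even, since its oddness would require $p$ to be odd. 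Thus the only component that can be odd is $\pi$, and by Lemma \ref{2isp} this happens exactly when $m\cdot r\cdot\gcd(2,c(0))=1$, i.e.\ when $m=r=1$ and $c(0)$ is odd; otherwise $\pi$ is even.

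Next I would count how many times each component appears in
\[
T_s[k] = \sigma[k_{s+1}] \circ \pi \circ \lambda \circ (\sigma[k_s] \circ \rho \circ \pi \circ \lambda)\circ\cdots \circ  (\sigma[k_2] \circ \rho \circ \pi \circ \lambda)\circ \sigma[k_1].
\]
The $s-1$ inner blocks $(\sigma[k_i]\circ\rho\circ\pi\circ\lambda)$ for $2\leq i\leq s$, together with the final factor $\sigma[k_{s+1}]\circ\pi\circ\lambda$ and the initial factor $\sigma[k_1]$, contribute $s$ copies of $\pi$ (as well as $s$ copies of $\lambda$, $s-1$ copies of $\rho$, and $s+1$ factors of the form $\sigma[\cdot]$). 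Since $\sigma[\cdot]$, $\lambda$, and $\rho$ are all even by the previous step, the homomorphism property gives $\mathrm{par}(T_s[k])\equiv s\cdot\mathrm{par}(\pi)\pmod 2$, where $\mathrm{par}$ denotes the image of a permutation under the sign map, viewed in $\ZZ/2\ZZ$.

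Finally I would extract the equivalence: if $\pi$ is even then $\mathrm{par}(T_s[k])=0$ regardless of $s$; if $\pi$ is odd then $\mathrm{par}(T_s[k])\equiv s\pmod 2$, which vanishes precisely when $s$ is even. Hence $T_s[k]$ is an even permutation if and only if $\pi$ is even, or $\pi$ is odd and $s$ is even. The argument is essentially bookkeeping; the only points that require care are the multiplicity count of $\pi$ in the round expansion and the verification that $\sigma[\cdot]$, $\lambda$, and $\rho$ are even for every admissible choice of parameters with $n=2$ (none of which depend on $s$), so I do not anticipate any substantive obstacle.
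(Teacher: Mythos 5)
Your proposal is correct and follows essentially the route the paper intends: the paper omits the proof, noting it follows from the component parity results (Lemmas \ref{ARKlemma}, \ref{SBlemma}, \ref{MClemma}, \ref{2isp} and Corollary \ref{n=2}), and your argument is exactly that bookkeeping — for $p=2$, $n=2$ every $\sigma[k_i]$, $\lambda$, and $\rho$ is even, $\pi$ occurs $s$ times in $T_s[k]$, so $\mathrm{sgn}(T_s[k])=\mathrm{sgn}(\pi)^s$, giving the stated equivalence.
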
 
\begin{corollary}
The  $s$-round Rijndael-like function $T_s[k]: M_{m,1}(\GF(2^r))\rightarrow M_{m,1}(\GF(2^r))$ is an even permutation if and only if $\sigma$ is odd or $\lambda$ is odd or else $s$ is even.
\end{corollary}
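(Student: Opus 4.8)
The plan is to prove the corollary by a sign (parity) computation in the symmetric group on the state space $M_{m,1}(\GF(2^r))$, leaning on the one-round analysis already recorded. First I would note how each component behaves when $n=1$: the column index takes a single value, so every offset $c(i)$ is $0$ and $\pi$ is the identity, hence even; and, as remarked immediately after Lemma~\ref{MClemma}, for $p=2$ the map $\rho$ is an odd permutation in this case. By Lemma~\ref{ARKlemma}(2), $\sigma[k]$ is odd exactly when $r\cdot m=1$ (that is, $r=m=1$), and by Lemma~\ref{SBlemma}, $\lambda$ is odd exactly when $r\ge2$ and $m=1$. These two conditions are mutually exclusive, and both fail whenever $m\ge2$.

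Next I would decompose $T_s[k]$ as $(\sigma[k_{s+1}]\circ\pi\circ\lambda)\circ T[k_s]\circ\cdots\circ T[k_2]\circ\sigma[k_1]$, i.e.\ as $s-1$ interior round functions $T[k_i]=\sigma[k_i]\circ\rho\circ\pi\circ\lambda$ flanked by an initial key addition $\sigma[k_1]$ and a final MixColumns-free block $\sigma[k_{s+1}]\circ\pi\circ\lambda$, and then add the signs of all the factors modulo $2$. Since $\pi$ is even and the $\sigma[k_i]$ share a common sign, the two end contributions cancel except for one copy of the sign of $\lambda$, so the sign of $T_s[k]$ comes out equal, modulo $2$, to the sign of $\lambda$ plus $(s-1)$ times the sign of $T[k]$; by Corollary~\ref{n=1} the latter vanishes precisely when $\sigma[k]$ or $\lambda$ is odd. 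I would finish by evaluating this in the three exhaustive cases $m\ge2$, $m=1$ with $r\ge2$, and $m=r=1$ — in which, respectively, $T[k]$ is odd/even/even and $\lambda$ is even/odd/even — reading off the sign of $T_s[k]$ as an explicit function of the parity of $s$ in each, and then re-assembling the three cases into the stated equivalence in terms of the parities of $\sigma[k]$, $\lambda$, and $s$.

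The one step that genuinely requires care is this parity count. For $n>2$ and $n=2$ the function $\rho$ is an even permutation, so the fact that $T_s[k]$ is not a clean $s$-fold iterate of a single round — it carries an extra whitening at the front and omits MixColumns in the last round — is parity-neutral; for $n=1$, however, $\rho$ is odd, so that discrepancy does affect the sign and must be tracked exactly, which is why a split on $m$ and $r$ cannot be avoided here. I would also flag the minor point that when $r=m=1$ the permutation $\sigma[k]$ is the identity for the zero subkey and odd for every other subkey, so "$\sigma[k]$ is odd" is to be read for a generic choice of subkeys.
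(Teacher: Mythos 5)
Your setup and bookkeeping are fine, and the route is essentially the paper's (reduce to the one-round parity of Corollary~\ref{n=1} and count occurrences of the component maps), but the last step --- ``re-assembling the three cases into the stated equivalence'' --- does not go through. Your own formula gives $\mathrm{sgn}(T_s[k])\equiv \mathrm{sgn}(\lambda)+(s-1)\,\mathrm{sgn}(T[k]) \pmod 2$, and for $n=1$, $p=2$ the note after Lemma~\ref{MClemma} makes $\rho$ odd, so $\mathrm{sgn}(T[k])\equiv \mathrm{sgn}(\sigma[k])+\mathrm{sgn}(\lambda)+1$. Evaluating your three cases honestly: for $m\ge 2$ (both $\sigma[k]$ and $\lambda$ even) you get $\mathrm{sgn}(T_s[k])\equiv s-1$, i.e.\ $T_s[k]$ is even iff $s$ is \emph{odd}; for $m=1$, $r\ge 2$ ($\lambda$ odd) you get $\mathrm{sgn}(T_s[k])\equiv 1$ for every $s$, i.e.\ never even; only for $m=r=1$ ($\sigma[k]$ odd) do you get ``always even.'' The first two outcomes are not the asserted ``even iff $\sigma$ odd or $\lambda$ odd or $s$ even,'' so the case analysis you propose cannot be assembled into the statement; as written, your argument would end by contradicting the claim rather than establishing it.

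The source of the discrepancy is exactly the point you flagged as ``requiring care.'' The statement as printed corresponds to the count $\mathrm{sgn}(T_s[k])\equiv s\cdot\mathrm{sgn}(T[k])$, i.e.\ to treating $T_s[k]$ as an $s$-fold composition of full one-round maps and then invoking Corollary~\ref{n=1} (this is the omitted argument behind ``Using corollaries~\ref{n>2}, \ref{n=2} and \ref{n=1} we have\dots''). That identification is parity-correct precisely when the $\rho$ omitted from the last round is even, which holds for $n\ge 2$ but fails for $n=1$, $p=2$ by the paper's own assertion that $\rho$ is then odd; note that Theorem~\ref{fullpermutations}, by contrast, does track the $s-1$ occurrences of $\rho$ through its clause ``$s$ even and $\rho$ odd.'' So you must either adopt the $s$-full-rounds reading explicitly, in which case your careful treatment of the end blocks is unnecessary and the corollary is a one-line consequence of Corollary~\ref{n=1}, or keep the displayed definition of $T_s[k]$ literally, in which case your computation shows the equivalence would have to be amended (to ``even iff $\sigma$ is odd, or else $\sigma$ and $\lambda$ are both even and $s$ is odd''). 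What you cannot do is both track the missing $\rho$ and arrive at the statement as printed. Your closing caveat about $\sigma[k]$ being the identity for the zero subkey is fair but minor.
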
 
The proofs of the theorems below are omitted as they follow directly from the above theorems about the parity of the functions $\sigma,\,\rho,\, \lambda$ and $ \pi$. 
\begin{theorem}\label{fullpermutations}
Let $p>2$ be a prime. Then the $s$-round generalized Rijndael-like function  $T_s[k]$ is an odd permutation if
\begin{itemize}
\item [(i)] $s$ is even, and $\rho$ is  odd, or else
\item [(ii)] $s$ is  odd, and either $\pi$ or $\lambda$ is odd.
\end{itemize}
 \end{theorem}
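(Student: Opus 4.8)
The plan is to reduce the claim to a parity count, exploiting that the sign map on the symmetric group of the state space $M_{m,n}(\GF(p^r))$ converts functional composition into addition modulo $2$ of parities. First I would expand the definition of $T_s[k]$ and tally how often each component map occurs. Grouping the $s-1$ inner blocks $\sigma[k_i]\circ\rho\circ\pi\circ\lambda$ (for $2\le i\le s$) together with the factor $\sigma[k_{s+1}]\circ\pi\circ\lambda$ at one end and the factor $\sigma[k_1]$ at the other, one finds that $\lambda$ occurs exactly $s$ times, $\pi$ occurs exactly $s$ times, $\rho$ occurs exactly $s-1$ times, and the key-addition maps $\sigma[\cdot]$ occur $s+1$ times in all.

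Next I would invoke Lemma \ref{ARKlemma}: since $p>2$, every $\sigma[k_i]$ is an even permutation and so contributes nothing to the parity of $T_s[k]$ (in particular the parity of $T_s[k]$ is independent of the key $k$ and of the key schedule). Writing $\mathrm{par}(g)\in\{0,1\}$ for the parity of a permutation $g$, with $0$ denoting even, it follows that
\[
  \mathrm{par}(T_s[k]) \equiv (s-1)\,\mathrm{par}(\rho) + s\,\mathrm{par}(\pi) + s\,\mathrm{par}(\lambda) \pmod 2,
\]
and $\mathrm{par}(\rho)$, $\mathrm{par}(\pi)$, $\mathrm{par}(\lambda)$ are precisely the parities computed in Lemmas \ref{MClemma}, \ref{SRlemma} and \ref{SBlemma}.

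Then I would split into the two cases of the hypothesis. In case (i), $s$ is even, so the $s\,\mathrm{par}(\pi)$ and $s\,\mathrm{par}(\lambda)$ terms vanish modulo $2$ while $s-1$ is odd; since $\rho$ is assumed odd, $\mathrm{par}(T_s[k])\equiv 1$, so $T_s[k]$ is an odd permutation. In case (ii), $s$ is odd, so the $(s-1)\,\mathrm{par}(\rho)$ term vanishes and $\mathrm{par}(T_s[k])\equiv \mathrm{par}(\pi)+\mathrm{par}(\lambda)\pmod 2$. Here I would record the observation, read off from Lemmas \ref{SBlemma} and \ref{SRlemma}, that $\pi$ and $\lambda$ can never both be odd: $\pi$ can be odd only when $n$ is even, whereas $\lambda$ can be odd only when $n$ is odd. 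Hence the hypothesis that $\pi$ or $\lambda$ is odd in fact forces exactly one of the two to be odd, so $\mathrm{par}(\pi)+\mathrm{par}(\lambda)\equiv 1$ and $T_s[k]$ is again odd.

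The computation is otherwise entirely routine; the only places that need care are getting the multiplicities of $\lambda$, $\pi$, $\rho$ and $\sigma$ in $T_s[k]$ exactly right --- noting the asymmetry caused by the absence of $\rho$ in the final round and by the two extra flanking $\sigma$-maps --- and recording the remark that makes the disjunction in (ii) effectively exclusive. Everything else follows immediately from the parity results already established.
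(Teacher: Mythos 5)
Your proof is correct and is essentially the argument the paper intends but omits (the paper states that Theorem \ref{fullpermutations} ``follows directly'' from the parity lemmas): you count the multiplicities $s$, $s$, $s-1$, $s+1$ of $\lambda$, $\pi$, $\rho$, $\sigma$ in $T_s[k]$, discard the $\sigma$'s via Lemma \ref{ARKlemma} since $p>2$, and reduce to $\mathrm{par}(T_s[k])\equiv (s-1)\,\mathrm{par}(\rho)+s\,\mathrm{par}(\pi)+s\,\mathrm{par}(\lambda)\pmod 2$. Your extra observation that for $p>2$ the maps $\pi$ and $\lambda$ can never both be odd (Lemma \ref{SRlemma} requires $n$ even, Lemma \ref{SBlemma} requires $n$ odd) is exactly the point needed to make case (ii) go through under the inclusive reading of ``either\dots or,'' and it is a worthwhile detail to record.
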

\begin{corollary}
Let $p>2$ be a prime.  Then the set of $s$-round Rijndael-like functions do not form a group if
\begin{itemize}
\item  [(i)] $s$ is even, and $\rho$ is  odd, or else
\item  [(ii)] $s$ is  odd, and either $\pi$ or $\lambda$ is odd.    
\end{itemize}
\end{corollary}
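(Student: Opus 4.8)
\emph{Proof proposal.} The plan is to obtain this as an immediate consequence of Theorem \ref{fullpermutations} together with the elementary parity fact that the composition of two odd permutations is an even permutation. First I would observe that under hypothesis (i) or (ii) the parity of the $s$-round function $T_s[k]$ is independent of the key: by Lemma \ref{ARKlemma} every $\sigma[k_i]$ is an even permutation when $p>2$, so the parity of $T_s[k]$ is determined entirely by the parities of $\rho$, $\pi$, $\lambda$ and the number of rounds $s$, none of which involves $k$. Hence Theorem \ref{fullpermutations} applies uniformly over all keys, and every $s$-round generalized Rijndael-like function $T_s[k]$ with $k\in\KK$ is an odd permutation of the state space $M_{m,n}(\GF(p^r))$.

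Next I would show that closure under composition fails. Write $\mathcal{T}_s=\{T_s[k]\mid k\in\KK\}$; this set is nonempty and, by the previous step, consists entirely of odd permutations. Take any $g,h\in\mathcal{T}_s$ (possibly with $g=h$). Since each of $g$ and $h$ can be written as a product of an odd number of transpositions, $g\circ h$ is a product of an even number of transpositions, i.e.\ $g\circ h$ is an even permutation. An even permutation can never coincide with an odd permutation, so $g\circ h\notin\mathcal{T}_s$. Thus $\mathcal{T}_s$ is not closed under functional composition.

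Finally I would invoke the characterization recalled in Section~2 (Theorem~3.3 of \cite{G}): since the ambient symmetric group on $M_{m,n}(\GF(p^r))$ is finite, a set of permutations forms a group under composition if and only if it is closed under composition. As $\mathcal{T}_s$ is not closed, it does not form a group; alternatively one may simply note that any group of permutations contains the identity, which is an even permutation and hence not a member of $\mathcal{T}_s$. This establishes the corollary, and in particular justifies strengthening the relevant Rijndael-like ciphers by sequential multiple encryption in the regimes (i) and (ii).

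There is essentially no technical obstacle here, since the real work is done in Theorem \ref{fullpermutations}. The only point that needs a moment's care is the first step — verifying that the hypotheses force \emph{every} $T_s[k]$, and not merely some particular one, to be odd — because it is precisely this key-independence that turns the parity of $T_s[k]$ into a genuine obstruction to closure of the whole set $\mathcal{T}_s$.
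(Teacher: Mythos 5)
Your proposal is correct and follows exactly the route the paper intends (it omits the proof as "following directly"): Theorem \ref{fullpermutations} makes every $T_s[k]$ an odd permutation under (i) or (ii), and since the composition of two odd permutations is even (equivalently, the identity is even), the set cannot be closed under composition and hence, by the finite-group criterion recalled in Section~2, is not a group. No gaps; your extra remark on key-independence of the parity via Lemma \ref{ARKlemma} is a harmless clarification already built into the statement of Theorem \ref{fullpermutations}.
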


\section{Groups generated by the generalized Rijndael-like round functions}

In this section we show properties of groups generated by the round functions of the Rijndael-like $\mathcal {SP}$-network. We provide conditions under which the group generated by the generalized  Rijndael-like round functions based on operations of the finite field $\GF (p^k)$ ($p\geq 2$) is equal to the symmetric group or the alternating group on the state space. Some of the techniques that we use for this result appear in \cite{C}.

In our analysis of this group note that by Lemmas \ref{SRlinear} and \ref{MClinear}, the functions $\rho$ and $\pi$ appearing in $T[k]=\sigma[k] \circ \rho \circ \pi \circ \lambda$ are both linear. Thus the map $\alpha=\rho \circ \pi$ is a linear transformation. 

The space $V=M_{m,n}(\GF(p^r))$ is a direct sum 
\[
  V=V_1 \oplus \cdots \oplus V_{mn}.
\]
where each $V_i$ has dimension $r$ over $\GF(p)$. For any $v \in V$ we write
\[
v=v_1+\dots+v_{mn}
\]
where $v_i \in V_i$. Also, we consider the projections $\textrm{Proj}_i : V \to V_i$ onto $V_i$ given by $\textrm{Proj}_i(v)=v_i$. 
\begin{definition}
We say that $\gamma :V \to V$ is a piecewise Galois field inversion if for all $v \in V$, $\gamma(v):=(v_1)^{\epsilon_1} \oplus \dots \oplus (v_{mn})^{\epsilon_{mn}}$, where $\epsilon_{mn}\in\{-1,1\}$ is such that
\[
  \epsilon_i = \left\{\begin{tabular}{ll}
                     -1 & if $v_i\neq 0$\\
                      1 & otherwise
                     \end{tabular}
              \right. 
\] 
\end{definition}

\begin{lemma}\label{1rdlemma}
Let $\gamma_i$ denotes the restriction of $\gamma$ to $V_i$ and let $r>4$. Then  
\begin{itemize}
\item [(1)] { $\gamma(0) = 0$ and $\gamma^2$ is the identity map.}
\item [(2)] {For all $i \in \mathbb{Z}_{mn}$ and}
\begin{itemize}
\item[(a)]{For all $v \in V_i$ where $v \neq 0$, the image of the map $V_i \to V_i$ which maps $x\mapsto \gamma_i(x + v) - \gamma_i(x)$ has size greater than $p^{r-2}$, and}
\item[(b)]{If a subspace of $V_i$ is invariant under $\gamma_i$ then it has codimension at least $3$.}
\end{itemize}
\end{itemize}
\end{lemma}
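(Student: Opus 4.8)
The plan is to identify each summand $V_i$ with a copy of $\FF := \GF(p^r)$, so that $\gamma_i$ becomes the map fixing $0$ and sending every nonzero $x$ to $x^{-1}$, and so that "subspace of $V_i$" means $\GF(p)$-subspace of $\FF$; then I treat the three items in turn. Item (1) is immediate from the definition: all coordinates of $0$ vanish, so every exponent $\epsilon_i$ equals $1$ and $\gamma(0)=0$; and since $x^{\epsilon_i}=0$ exactly when $x=0$, a second application of $\gamma$ re-uses the same $\epsilon_i$ on each coordinate, so $\gamma^2$ acts coordinatewise by $x\mapsto x^{\epsilon_i^2}=x$, giving $\gamma^2=\mathrm{id}$ (in particular $\gamma$ is a bijection).

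For item (2)(a) I would fix $v\neq 0$ and put $D_v(x)=\gamma_i(x+v)-\gamma_i(x)$. Restricting to $X:=\FF\setminus\{0,-v\}$, which has $p^r-2$ elements since $v\neq 0$, neither argument of $\gamma_i$ is zero, so $D_v(x)=(x+v)^{-1}-x^{-1}=-v\,(x(x+v))^{-1}$. From $D_v(x)=D_v(y)$ with $x,y\in X$ one gets $x(x+v)=y(y+v)$, hence $(x-y)(x+y+v)=0$, so $y\in\{x,\,-v-x\}$; and the involution $x\mapsto -v-x$ carries $X$ into itself, so every fibre of $D_v|_X$ has at most two points. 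Therefore $|\mathrm{Im}(D_v)|\ge|\mathrm{Im}(D_v|_X)|\ge (p^r-2)/2$, and $(p^r-2)/2>p^{r-2}$ reduces to $p^{r-2}(p^2-2)>2$, which holds because $r>4$ gives $p^{r-2}\ge 2^3$ while $p^2-2\ge 2$. This argument is uniform in $p$ and never uses that $p$ is odd.

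For item (2)(b), let $W$ be a $\gamma_i$-invariant proper subspace of $V_i\cong\FF$; if $W=\{0\}$ its codimension is $r>3$, so assume $W\neq\{0\}$. As a subspace $W$ is closed under addition, and $\gamma_i$-invariance is precisely the condition $W^{-1}\subseteq W$ (the requirement $0\mapsto 0$ being automatic), so $W$ is a non-trivial inverse-closed additive subgroup of $\GF(p^r)$; by Theorem \ref{invclosedaddsubgpthm} we get $|W|=p^k$ with $k\mid r$, and $k<r$ since $W$ is proper. Hence $\dim_{\GF(p)}W=k$ is at most the largest proper divisor of $r$: for $r$ even that divisor is $r/2$, and $r>4$ forces $r\ge 6$, so the codimension $r-k\ge r/2\ge 3$; for $r$ odd we have $k\le r/3$, so the codimension is $\ge 2r/3>3$ as $r\ge 5$. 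In every case the codimension is at least $3$.

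The step demanding the most care is (2)(a): one must first pass to the locus where neither argument of $\gamma_i$ vanishes before using the rational expression for $D_v$, and then obtain the bound "each fibre has at most two preimages" without assuming $p$ is odd — the case $p=2$ is where a careless computation would fail. Item (2)(b) is then essentially bookkeeping once Mattarei's classification, in the packaged form of Theorem \ref{invclosedaddsubgpthm}, is invoked; the only point to flag is that the hypothesis $r>4$ is exactly what makes every proper divisor of $r$ leave codimension at least $3$ (for $r=4$ the subfield $\GF(p^2)\subseteq\GF(p^4)$ already violates this).
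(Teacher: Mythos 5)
Your proof is correct and follows essentially the same route as the paper: item (2)(a) via the observation that $x\mapsto x(x+v)$ is at most $2$-to-$1$ on $\GF(p^r)\setminus\{0,-v\}$ (the paper phrases this as $b(x^2+vx)=-v$ having at least $\tfrac{p^r-2}{2}$ solutions in $b$), and item (2)(b) via Mattarei's classification packaged as Theorem \ref{invclosedaddsubgpthm}, giving a proper divisor $k\mid r$ and hence codimension at least $r/2\ge 3$ for $r>4$. Your write-up is slightly more explicit than the paper's (fibre counting valid for $p=2$, the trivial subspace case), but there is no substantive difference in method.
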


\begin{proof}
The  condition (1) is satisfied by construction of $\gamma$. 

{\flushleft{\underline{Proof of (2)(a):}}}  Fix $0 \ne v \in \GF(p^r)$ and consider the map $\GF(p^r) \rightarrow \GF(p^r)$ which maps $x \mapsto (x+ v)^{-1} - x^{-1}$.  The size of the image of this map is equal to the number of distinct $b$s that solve the equation $(x+ v)^{-1} - x^{-1} = b$.  If $x \ne \mathbf{0}$ or $-v$, then $(x+v)((x+ v)^{-1}) = 1$ and $x(x^{-1}) = 1$, and 
\begin{center}
\begin{tabular}{cl}
${}$& $(x+ v)^{-1} - x^{-1} = b$ \\
$\Leftrightarrow$& $x(x+v)((x+ v)^{-1} - x^{-1}) = x(x+v)b$ \\
$\Leftrightarrow$ & $x - (x+v) = bx^2 + bvx$ \\
$\Leftrightarrow$ & $bx^2 + bvx + v = 0$\\
$\Leftrightarrow$ & $b(x^2 + vx) = -v$\\
\end{tabular}
\end{center}
Now as $x$ ranges over $\GF(p^r)$ except $\mathbf{0}$ and $-v$, the quantity $(x^2 + vx)$ ranges over at least $\frac{p^r-2}{2}$ distinct nonzero values, whence solving for $b$ we find at least $\frac{p^r-2}{2}$ distinct values of $b$. Therefore the map $x \mapsto (x+ v)^{-1} - x^{-1}$ has at least $\frac{p^r-2}{2} > p^{r-2}$ distinct values, fulfilling condition 2(a).

{\flushleft{\underline{Proof of (2)(b):}}}   
Assume that $U$ is a proper (vector-) subspace of $V_i$ and $U$ is closed under inversion.   
As subspace, $U$ is an additive subgroup of $V_i$.  Apply Theorem \ref{mattareithm} and Lemma \ref{qfelemma} to find that either $U$ is a subfield of $V_i$, or $|U|=|F|$ for some subfield $F \subset V_i$.  Since $V_i$ is isomorphic to $\GF(p^r)$, Theorem \ref{fieldorderthm} implies that $|U| = p^k$ where $k\vert r \mbox{ and }k\ne {r}$. Then as $k$ is  a proper divisor of $r$, $k\le\frac{r}{2}$. But then we have the following implications
\begin{center}
\begin{tabular}{cll}
&$|U| \le p^{\frac{r}{2}}$ & \\
$\Rightarrow$ &$\mathrm{dim}(U) \le \frac{r}{2}$& because $|U|=p^{\mathrm{dim}(U)}$\\
$\Rightarrow$ &$\mathrm{codim}(U) \ge \frac{r}{2}$& because dim$(U) + $ codim$(U) = $ dim$(V_i) = r$\\
$\Rightarrow$ &$\mathrm{codim}(U) \ge 3$& provided $r \ge 5$.
\end{tabular}
\end{center}
This completes the proof of condition 2(b) and the theorem.
\end{proof}

\begin{theorem}\label{towardsprimitivity1} 
Let $r>4$ and  $V = M_{m,n}(\GF(p^r))$. If $U \neq \{ {\mathbf 0} \}$ is a subspace of $V$ such that for all $u\in U$ and  $v \in V$
\begin{equation*}
(\alpha \circ \gamma)(v+ u) - (\alpha \circ \gamma)(v) \in U,
\end{equation*} \
where $\alpha=\rho \circ \pi$,  then $U$ is invariant under $\alpha$ and $U$ is a sum of some of the $V_i$.
\end{theorem}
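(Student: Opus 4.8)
The plan is to exploit the interplay between the linear bijection $\alpha$ and the coordinate-wise involution $\gamma$, and then to use Lemma~\ref{1rdlemma} to pin $U$ down one coordinate at a time. Throughout I write $\gamma_i$ for the restriction of $\gamma$ to $V_i$; each $\gamma_i$ is a bijection of $V_i$ fixing $0$, $\gamma$ acts by $\gamma(w)=\sum_i\gamma_i(w_i)$, and $\gamma^2=\mathrm{id}$ by Lemma~\ref{1rdlemma}(1).

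\emph{Step 1 (move everything to $\gamma$).} Set $U':=\alpha^{-1}(U)$, a subspace with $\dim U'=\dim U$. Putting $v=0$ in the hypothesis and using $\gamma(0)=0$ gives $(\alpha\circ\gamma)(U)\subseteq U$; since $\alpha\circ\gamma$ is a bijection of the finite set $V$, this forces $(\alpha\circ\gamma)(U)=U$, hence $\gamma(U)=U'$, and applying $\gamma$ once more, $\gamma(U')=U$. Because $\gamma$ is coordinate-wise with each $\gamma_i$ a bijection of $V_i$ fixing $0$, intersecting with $V_i$ yields $\gamma(U\cap V_i)=U'\cap V_i$ and $\gamma(U'\cap V_i)=U\cap V_i$, so in particular $\dim(U\cap V_i)=\dim(U'\cap V_i)$ for every $i$. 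Applying $\alpha^{-1}$ to the hypothesis also restates it as: $\gamma(v+u)-\gamma(v)\in U'$ for all $u\in U$, $v\in V$.

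\emph{Step 2 (each used coordinate is swallowed).} I claim $V_i\subseteq U$ whenever $\textrm{Proj}_i(U)\neq\{0\}$. Fix such an $i$ and $u\in U$ with $u_i\neq 0$. Restricting the rephrased condition to vectors $v$ with $v_j=0$ for $j\neq i$, its left side equals $\bigl(\gamma_i(v_i+u_i)-\gamma_i(v_i)\bigr)+c$, where $c=\sum_{j\neq i}\gamma_j(u_j)$ does not depend on $v_i$. Taking the difference of two such elements (for $v_i$ and $v_i'$) shows that the difference set of $D:=\{\gamma_i(x+u_i)-\gamma_i(x):x\in V_i\}$ lies in $U'\cap V_i$, hence so does a translate of $D$. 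By Lemma~\ref{1rdlemma}(2a), $|D|>p^{r-2}$, so the $\GF(p)$-subspace $U'\cap V_i$ has more than $p^{r-2}$ elements and therefore dimension $\geq r-1$; consequently $\dim(U\cap V_i)\geq r-1$ as well. The dimension formula gives $\dim(U\cap U'\cap V_i)\geq 2(r-1)-r=r-2$, i.e. codimension $\leq 2$ in $V_i$. Since $\gamma_i$ interchanges $U\cap V_i$ and $U'\cap V_i$, the subspace $U\cap U'\cap V_i$ is $\gamma_i$-invariant; as $r>4$, Lemma~\ref{1rdlemma}(2b) shows that a proper $\gamma_i$-invariant subspace of $V_i$ has codimension at least $3$, so $U\cap U'\cap V_i$ cannot be proper, i.e. $U\cap U'\cap V_i=V_i$. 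In particular $V_i\subseteq U$.

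\emph{Step 3 (assemble).} With $S=\{i:\textrm{Proj}_i(U)\neq\{0\}\}$, Step 2 gives $\bigoplus_{i\in S}V_i\subseteq U$, while $U\subseteq\bigoplus_{i\in S}V_i$ by the definition of $S$; hence $U=\bigoplus_{i\in S}V_i$ is a sum of some of the $V_i$. Because $\gamma$ maps each $V_i$ onto itself, $\gamma(U)=U$, so $U'=\alpha^{-1}(U)=\gamma(U)=U$ and thus $\alpha(U)=U$, which is the invariance claim. The step I expect to be the main obstacle is Step 2: isolating the single-coordinate contribution of the nonlinear map $\gamma$ while correctly tracking the additive constant $c$, and converting the image-size estimate of Lemma~\ref{1rdlemma}(2a) into the dimension bound $\dim(U\cap V_i),\dim(U'\cap V_i)\geq r-1$ via a difference-set argument. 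Once those lower bounds are secured, the codimension count together with Lemma~\ref{1rdlemma}(2b) finishes the proof immediately.
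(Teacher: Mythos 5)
Your proof is correct and follows essentially the same route as the paper: pass to $U'=\alpha^{-1}(U)$, show that $\gamma$ swaps $U$ and $U'$, use Lemma \ref{1rdlemma}(2)(a) to force $U\cap U'\cap V_i$ to have codimension at most $2$ in every coordinate where $U$ projects nontrivially, and then invoke Lemma \ref{1rdlemma}(2)(b) (with $r>4$) to conclude $V_i\subseteq U$ and assemble $U$ as a sum of the $V_i$. The only (harmless) variation is in your Step 2, where you obtain $|U'\cap V_i|>p^{r-2}$ in one stroke by translating the difference set $D$ built from the original $u$, whereas the paper first argues that $U\cap V_i\neq\{\mathbf 0\}$ and then reapplies Lemma \ref{1rdlemma}(2)(a) with a nonzero element of $U\cap V_i$ itself.
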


\begin{proof}
We already know that $\alpha$ is a permutation of the set $V$. By Lemma \ref {SRlinear} and Lemma \ref{MClinear} we have that $\alpha$ is an invertible linear transformation of the vector space $V$ over the field $\GF(p^r)$. Thus, $W = \alpha^{-1}\lbrack U\rbrack$ is a vector subspace of $V$ of the same dimension as $U$. 

Thus, for all $u \in U$ and $v \in V$ we have
\begin{equation}
\gamma(v + u) - \gamma(v) \in \alpha^{-1}\lbrack U\rbrack = W\text{.} \label{First}
\end{equation} \
Setting $v = 0$ in $\eqref{First}$ and using the fact that $\gamma(0)=0$, we see that for each $u\in U$ we have $\gamma(u) \in W$.  Hence, $\gamma$ is a function from $U$ to $W$. Since $U$ and $W$ are finite and $|\gamma\lbrack U\rbrack| = |U| = |W|$, (1) of Lemma \ref{1rdlemma} implies that
\begin{equation*}
\gamma\lbrack U\rbrack = W \text{ and } \gamma\lbrack W\rbrack = U\text{.}
\end{equation*} \
Using the hypothesis that $U$ is not $\{{\mathbf 0} \}$, choose a $u\in U$ and an $i$ such that $u_i = Proj_i(u) \neq 0$. With $i$ fixed from now on, consider any $v_i \in V_i$ with $v_i \neq 0$. We have that 
$\gamma(u + v_i) - \gamma(v_i) \in W$ and $\gamma(u) \in W$.  Since $W$ is a vector space, $-\gamma(u) + \gamma(u + v_i) - \gamma(v_i) \in W$.
Explicitly written $\gamma(u+v_i)$ and $\gamma(u)$ have the form
\[
  \gamma(u + v_i) = \gamma_1(u_1) \oplus \gamma_2(u_2) \oplus \cdots \oplus \gamma_i(u_i + v_i) \oplus \cdots \oplus \gamma_{mn}(u_{mn})
\]
and 
\[ \gamma(u) = \gamma_1(u_1) \oplus \gamma_2(u_2) \oplus \cdots \oplus \gamma_i(u_i) \oplus \cdots \oplus \gamma_{mn}(u_{mn}).
\]
Since $V_i$ is a vector space,  $-\gamma_i(u_i) + \gamma_i(u_i + v_i) - \gamma_i(v_i)\in V_i$. 
Therefore,
\[
  -\gamma(u) + \gamma(u + v_i) - \gamma(v_i) = -\gamma_i(u_i) + \gamma_i(u_i + v_i) - \gamma_i(v_i) \in W \cap V_i.
 \]  
If for each $v_i\in V_i$ this vector was the zero-vector, then the image of the map $v_i \mapsto \gamma_i(v_i + u_i) - \gamma_i(v_i)$ from $V_i$ to $V_i$ would be $\{ \gamma_i(u_i) \}$.  This would contradict (2)(a) of Lemma \ref{1rdlemma}.  Thus, $W \cap V_i \neq \{ {\mathbf 0} \}$.

Since  $U\cap V_i=\gamma(W \cap V_i)$  and $\gamma_i(x)=0$ implies $x=0$, we have that $U \cap V_i \neq \{ {\mathbf 0} \}$.  Thus there is  a non-zero element $u_i \in U \cap V_i$. By the hypothesis that $r>4$ and (2)(a) of Lemma \ref{1rdlemma}, the map $x \mapsto \gamma_i(x + u_i) - \gamma_i(x)$ from $V_i$ to $V_i$ has image of cardinality greater than $p^{r-2}$. But as seen in $\eqref{First}$, the image of this map is also a subset of $W$. Thus $W\cap V_i$ is a linear subspace of $V_i$ and has cardinality greater then $p^{r-2}$. As subspace of $V_i$ the cardinality of $W\cap V_i$ must be factor of the cardinality $p^r$ of $V_i$ and thus is a power of the prime number $p$. It follows that the cardinality of $W\cap V_i$ is at least $p^{r-1}$. But then the codimension of $W\cap V_i$ in $V_i$ is at most $1$.  Similarly, the codimension of $U \cap V_i$ is at most $1$.  Hence, the subspace $U \cap W \cap V_i$ of $V_i$ has codimension of at most $2$ in $V_i$. In particular, since $r>2$ we have that $U\cap W\cap V_i\neq \{ {\mathbf 0} \}$.  

Because $\gamma(U) = W$ and $\gamma(W) = U$, we see that $U \cap W \cap V_i$ is invariant under $\gamma$.  From Condition (2), it follows that $U \cap W \cap V_i = V_i$.  Hence, $U \supset V_i$.

So if $U$ contains an element of $V_i$ for some $i$, then $U \supset V_i$.  Hence, $U$ is a direct sum of some of the $V_i$.  Since $W = \gamma(U)$ and $\gamma(V_i) = V_i$ for all $i$, we see that $W = U$.  And since $U = \gamma(W)$, it follows that $U = \alpha(U)$
\end{proof}

\begin{theorem}\label{primthm}
Let $\tau= \{T[k]\vert k \in \mathcal{K}\}$ be the set of all generalized Rijndael-like functions  $T[k]: M_{m,n}(\GF(p^r))\rightarrow M_{m,n}(\GF(p^r))$  ($p\geq 2$) and  $\GG_{\tau}=\langle T[k]\vert k\in\KK\rangle$ be the group generated by the set $\tau$.   Assume  that the only subspaces of  $M_{m,n}(\GF(p^r))$ that are invariant under $\alpha=\rho \circ \pi$ are $\{{\mathbf 0}\}$ and $M_{m,n}(\GF(p^r))$.  Then for all $m$, $n$ and $r>4$ the group $\GG_\tau$  is primitive.
\end{theorem}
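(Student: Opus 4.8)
The plan is to argue by contradiction. Suppose $\GG_\tau$ is imprimitive and fix a nontrivial block $B$ containing $\mathbf 0$, where $V=M_{m,n}(\GF(p^r))$. I will show that $B$ is then forced to be a subspace of $V$, invariant under $\alpha=\rho\circ\pi$, with $\{\mathbf 0\}\subsetneq B\subsetneq V$, contradicting the hypothesis; the workhorse for this last implication is Theorem~\ref{towardsprimitivity1}, so the real content is to verify that $B$ satisfies the difference condition in that statement.

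First I would note that $\GG_\tau$ contains the full translation group of $V$. Since $\rho$ and $\pi$ are linear (Lemmas~\ref{SRlinear} and~\ref{MClinear}), for any round keys $k,k'$ one computes
\[
  T[k]\circ T[k']^{-1}=\sigma[k]\circ\rho\circ\pi\circ\lambda\circ\lambda^{-1}\circ\pi^{-1}\circ\rho^{-1}\circ\sigma[-k']=\sigma[k-k'],
\]
so $\sigma[v]\in\GG_\tau$ for every $v\in V$; in particular $\GG_\tau$ is transitive and $F:=\alpha\circ\lambda=\sigma[-k]\circ T[k]$ lies in $\GG_\tau$. Now for $v\in B$ the set $\sigma[v](B)=B+v$ is a block containing $v\in B$, hence equals $B$; from $B+v=B$ for all $v\in B$ it follows that $B$ is a subgroup of $(V,+)$, hence a subspace, and that the block system is precisely the partition of $V$ into cosets $\{v+B:v\in V\}$ (these are blocks and there are exactly $|V|/|B|$ of them). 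Consequently $\GG_\tau$ acts on the quotient $V/B$, the translations acting as the transitive translation group of $V/B$, and $\psi:=\sigma[-F(\mathbf 0)]\circ F\in\GG_\tau$ fixes $\mathbf 0$, hence fixes the block $B$.

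The heart of the proof is to extract the difference condition. Writing the position-$(i,j)$ S-box as $x\mapsto A_{ij}x^{-1}+c_{ij}$ (notation of Lemma~\ref{SBlemma}), we have $\lambda=\sigma[c]\circ M\circ\gamma$ for a suitable constant matrix $c$, where $\gamma$ is the piecewise field inversion and $M$ is componentwise multiplication by the $A_{ij}$ --- an invertible linear map carrying each $V_i$ onto itself. Fix $u\in B$. Since $\sigma[u]$ acts trivially on $V/B$, so does $\psi\circ\sigma[u]\circ\psi^{-1}\in\GG_\tau$; that is, $(\psi\circ\sigma[u]\circ\psi^{-1})(w)-w\in B$ for all $w\in V$. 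Putting $w=\psi(z)$ and using $\psi(x)=F(x)-F(\mathbf 0)$, this reads $\psi(z+u)-\psi(z)\in B$, and
\[
  \psi(z+u)-\psi(z)=F(z+u)-F(z)=\alpha\bigl(\lambda(z+u)-\lambda(z)\bigr)=(\alpha\circ M)\bigl(\gamma(z+u)-\gamma(z)\bigr).
\]
Thus, with $\alpha':=\alpha\circ M=\rho\circ\pi\circ M$, still an invertible linear transformation of $V$, we obtain $(\alpha'\circ\gamma)(z+u)-(\alpha'\circ\gamma)(z)\in B$ for all $z\in V$ and all $u\in B$. The proof of Theorem~\ref{towardsprimitivity1} uses nothing about $\alpha$ beyond its being an invertible linear map, together with Lemma~\ref{1rdlemma}; applying it with $\alpha'$ in place of $\alpha$ (here $r>4$ enters) shows that $B$ is a sum of some of the $V_i$ and is invariant under $\alpha'$. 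Since each $V_i$ is $M$-invariant, $B$ is $M$-invariant, hence invariant under $\alpha=\alpha'\circ M^{-1}$. Because $B$ is a nontrivial block we have $\{\mathbf 0\}\subsetneq B\subsetneq V$, contradicting the hypothesis that $\{\mathbf 0\}$ and $V$ are the only $\alpha$-invariant subspaces of $V$. Hence $\GG_\tau$ has no nontrivial block and is primitive.

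The step I expect to be the main obstacle is exactly this extraction of the pointwise condition $(\alpha'\circ\gamma)(z+u)-(\alpha'\circ\gamma)(z)\in B$ from the bare facts that $B$ is a block and $\psi\in\GG_\tau$ fixes it; the device that makes it work is to pass to the quotient action on $V/B$ and observe that conjugating by $\psi$ a translation by $u\in B$ --- trivial on $V/B$ --- remains trivial there. A secondary point needing care is the bookkeeping that folds the linear factor $M$ and the additive constants of the S-box into $\alpha$ so that Theorem~\ref{towardsprimitivity1} applies verbatim. Everything else --- transitivity, blocks being subgroups, the block system being the coset partition --- is routine once the translation group is seen to lie inside $\GG_\tau$.
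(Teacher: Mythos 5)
Your proof is correct, and its skeleton coincides with the paper's: reduce imprimitivity to the existence of a nonzero proper subspace $U$ satisfying the difference condition $(\alpha\circ\gamma)(v+u)-(\alpha\circ\gamma)(v)\in U$, then invoke Theorem~\ref{towardsprimitivity1} to contradict the hypothesis on $\alpha$-invariant subspaces. The difference is in how that first step is obtained: the paper simply cites Corollary~4.1 of \cite{C}, whereas you re-derive it from scratch --- translations $\sigma[v]=T[k]\circ T[k']^{-1}$ lie in $\GG_\tau$, the block through $\mathbf 0$ is therefore an additive subgroup (hence a $\GF(p)$-subspace) and the block system is its coset partition, and conjugating a translation by the zero-fixing element $\psi=\sigma[-F(\mathbf 0)]\circ F$ inside the kernel of the action on $V/B$ yields the pointwise difference condition. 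This buys two things. First, the argument becomes self-contained (modulo the standing assumption, implicit in the paper as well, that the round keys exhaust $M_{m,n}(\GF(p^r))$ so that all translations are available). Second, your bookkeeping with $\lambda=\sigma[c]\circ M\circ\gamma$ is more careful than the paper's: the condition one actually extracts is for $\alpha'\circ\gamma$ with $\alpha'=\alpha\circ M$, not literally $\alpha\circ\gamma$, and your observations that the proof of Theorem~\ref{towardsprimitivity1} uses only that the linear map is invertible, that $M$ preserves each $V_i$ and hence preserves any sum of $V_i$'s, and that therefore $\alpha'$-invariance of $B$ gives $\alpha$-invariance, repair this small imprecision that the paper glosses over by quoting \cite{C} with $\alpha\circ\gamma$ directly.
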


\begin{proof}
Let $V = M_{m,n}(\GF(p^r))$.  Suppose that $G_{\tau}$ acts imprimitively on $V$.  By  Corollary 4.1 of \cite{C}, there is a proper subspace $U$ of $V$ such that $U \neq \{ {\mathbf 0} \}$ and such that for all $u\in U$ and  $v \in V$
\begin{equation*}
(\alpha \circ \gamma)(v+ u) - (\alpha \circ \gamma)(v) \in U.
\end{equation*} \
By Theorem \ref{towardsprimitivity1}, $U$ is a direct sum of some of the $V_i$ and an invariant subspace of $\alpha$ (\emph{i.e.}, $U = \alpha(U)$).  But this contradicts the hypothesis that $\alpha$ has no non-trivial invariant subspaces.  Therefore, $G$ is primitive. 
\end{proof}
The following theorem follows directly from  Lemma \ref{Rodgerslemma} and Theorem \ref{primthm}. 
\begin{theorem}\label{mainthm}
Let $\tau= \{T[k]\vert k \in \mathcal{K}\}$ be the set of all generalized Rijndael-like functions  on $M_{m,n}(\GF(p^r))$ and $\GG_{\tau}=\langle T[k]\vert k\in\KK\rangle$ be the group generated by the set $\tau$.  If  $\{{\mathbf 0}\}$ and $M_{m,n}(\GF(p^r))$ are the only subspaces of  $M_{m,n}(\GF(p^r))$ that are invariant under $\alpha=\rho \circ \pi$ and $\GG_{\tau}$ contains an $m$-cycle with $2\leq m\leq (n-m)!$, then for all  $m$,  $n>1$ and $r>4$  the group $\GG_\tau$ is either the alternating group or the symmetric group acting on  $M_{m,n}(\GF(p^r))$.
\end{theorem}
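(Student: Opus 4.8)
The plan is to read off Theorem \ref{mainthm} as a formal consequence of Theorem \ref{primthm} and Lemma \ref{Rodgerslemma}, so the proof is short; the only real content is the bookkeeping about the degree of $\GG_\tau$ and the matching of hypotheses. Throughout, write $X = M_{m,n}(\GF(p^r))$ and $N = |X| = p^{rmn}$, so that $\GG_\tau$ is a permutation group on $X$.

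First I would observe that $\GG_\tau$ is transitive on $X$, hence has degree exactly $N$. Indeed, every round function factors as $T[k] = \sigma[k]\circ\phi$ with $\phi = \rho\circ\pi\circ\lambda$ the same permutation for every key, so $T[k]\circ T[k']^{-1} = \sigma[k]\circ\sigma[k']^{-1} = \sigma[k-k'] \in \GG_\tau$; as $k,k'$ range over the key space these exhaust the translations of $X$, and the translation group is transitive (given $a,b\in X$, translation by $b-a$ carries $a$ to $b$). This in particular makes Lemma \ref{Rodgerslemma} applicable with the ``$n$'' there equal to $N$.

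Next, the hypotheses of Theorem \ref{mainthm} — namely $r>4$ and that $\{\mathbf 0\}$ and $X$ are the only subspaces of $X$ invariant under $\alpha = \rho\circ\pi$ — are exactly the hypotheses of Theorem \ref{primthm}. Hence $\GG_\tau$ is a primitive permutation group on $X$, for all $m$ and $n$. Finally, by hypothesis $\GG_\tau$ contains a cycle whose length $\ell$ satisfies $2\le \ell \le (N-\ell)!$ (this is the content of the phrase ``$\GG_\tau$ contains an $m$-cycle with $2\le m\le (n-m)!$'' once the symbols are read in the sense of Lemma \ref{Rodgerslemma}, i.e. with $m$ the cycle length and $n$ the permutation degree $N$). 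Since $\GG_\tau$ is primitive of degree $N$ and contains such a long cycle, Lemma \ref{Rodgerslemma} forces $\GG_\tau = \alt_X$ or $\GG_\tau = \sym_X$, which is the assertion; the restriction $n>1$ only serves to keep $X$ large enough that the cycle-length inequality can be met and the conclusion is nonvacuous.

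The \emph{main obstacle} at the level of this theorem is essentially nil: all the genuine difficulty has already been discharged in Theorem \ref{primthm} (through Theorem \ref{towardsprimitivity1} and Lemma \ref{1rdlemma}, where the structure of piecewise Galois-field inversion together with Mattarei's classification of inverse-closed additive subgroups does the work) and in Lemma \ref{Rodgerslemma}. If anything needs care it is the overloaded notation — $m$ and $n$ denote both the matrix dimensions and, respectively, a cycle length and the permutation degree — and the fact that the existence of the required long cycle in $\GG_\tau$ is itself a nontrivial assumption that must be verified separately for concrete parameter choices rather than derived here.
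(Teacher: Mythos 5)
Your proposal is correct and follows exactly the route the paper takes: the paper states that Theorem \ref{mainthm} ``follows directly from Lemma \ref{Rodgerslemma} and Theorem \ref{primthm}'', i.e.\ primitivity from Theorem \ref{primthm} plus the assumed long cycle fed into Lemma \ref{Rodgerslemma}. Your added remarks (transitivity via the translations $\sigma[k-k']$, and the clash of the symbols $m,n$ with the cycle length and degree $p^{rmn}$ in Lemma \ref{Rodgerslemma}) are sensible clarifications of points the paper leaves implicit, but they do not change the argument.
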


Note that the hypothesis that $\alpha$'s only invariant subspaces are $\{\mathbf{0}\}$ and $M_{m,n}(\GF(p^r))$ implies that $\gcd(c_1,...,c_m,n) = 1$.  Indeed,  suppose that  $\gcd(c_1,...,c_m,n) = x>1$.  Consider an input 
${\mathbf a}\in M_{m,n}(p^r)$  for $\alpha$ with only one non-zero entry  
\[  
  {\mathbf a} = \left[\begin{tabular}{cccccc}
              1        & 0 & 0 & ... & 0 & 0 \\
              0        & 0 & 0 & ... & 0 & 0 \\
              $\vdots$ &   &   &     &   &   \\
              0        & 0 & 0 & ... & 0 & 0 \\
           \end{tabular}
      \right]
\]       
Note that under $\alpha$, the orbit of ${\mathbf a}$ will have its non-zero entries at column positions of form $1 + k\cdot x \le n$, $k\in\naturals$. Thus, no orbit element will have a nonzero entry in the second column. But then as  $\alpha$ is linear it has an invariant subspace consisting of members of $M_{m,n}(\GF(p^r))$ that have no nonzero entries in the second column. This is a subspace different from $\{\mathbf{0}\}$ and $M_{m,n}(\GF(p^r))$, contradicting that $\alpha$'s only invariant subspaces are  $\{\mathbf{0}\}$ and $M_{m,n}(\GF(p^r))$.

Also, note that in general the condition {$\gcd(c_1,...,c_m,n) = 1$ is not sufficient to guarantee that $\alpha$'s only invariant subspaces are $\{\mathbf{0}\}$ and $M_{m,n}(GF(p^r))$. To see this, the reader is invited to consider the following example.

{\bf Example.}  Consider the vector space $M_{2,8}(\GF(7))$, an irreducible polynomial $f(x) = x^2+x+3$ over $\GF(7)$ and  $c_1 = 1$ and $c_2 = 5$. Since the MixColumns-like function $\rho$ is linear it can be specified as ${\mathbf d}=M\cdot {\mathbf c}$ for $c, d \in M_{2,8}(\GF(7))$ and $M$  a matrix of dimension $2\times 2$.   Let 
\[
  M= \left[\begin{tabular}{cc}
               1 & 4 \\
               1 & 0 \\
        \end{tabular}
  \right]
\]
i.e. the generating polynomial $M(x)=x+1$ for $\GF(7)/\langle f\rangle$. 

Now let ${\mathbf a}\in M_{2,8}(\GF(7))$ 
\[
 \mathbf a = \left[\begin{tabular}{cccccccc}
            1 & 0 & 0 & 0 & 0 & 0 & 0 & 0 \\
            3 & 0 & 0 & 0 & 0 & 0 & 0 & 0 \\
      \end{tabular}
     \right]
\]
be the input in the function $\alpha$. 
 It is easy to see that  the orbit of $\mathbf a$ under alpha has $48$ elements containing a linearly independent subset of at most $15$ elements. Thus the subspace $W$ generated by this orbit has dimension $1< dim(W) \le 15$, and as $\alpha$ is linear, this is an invariant subspace of $\alpha$ with dimension less than $dim(M_{2,8}(\GF(7)))=16$.

Note that the ShiftRows-like function $\pi$ for this example (in the sense of Definition 9.4.1 of \cite{DRB}) and the MixColumns-like function $\rho$ (in the sense that the orbit of any non-zero column vector includes all the nonzero column vectors) are  diffusion optimal. Thus, merely requiring that ShiftRows is diffusion optimal is not sufficient to guarantee that the only invariant subspaces of $\alpha$ are $\{\mathbf{0}\}$ and $M_{m,n}(\GF(p^r))$. \\

Next we determine the group $\GG_{\tau}^s=\langle T[k_s]T[k_{s-1}]\cdots T[k_1]\vert k_i\in\KK\rangle$ generated  by the set of all compositions of $s$ (independently chosen) generalized Rijndael-like functions.
\begin{theorem}\label{groupsgenth}
Let $\tau= \{T[k]\vert k \in \mathcal{K}\}$ be the set of all generalized Rijndael-like functions and  $\GG_{\tau}=\langle T[k]\vert k\in\KK\rangle$ be the group generated by the set $\tau$.   Then 
\begin{itemize}
\item [(a)]  If $\GG_{\tau} = \mathcal{A}_{p^{rmn}}$, then $\GG_{\tau}^s = \mathcal{A}_{p^{rmn}}$.  
\item [(b)] If $\GG_{\tau} = \mathcal{S}_{p^{rmn}}$, then $\GG_{\tau}^s = \mathcal{A}_{p^{rmn}}$ if $s$ is even and $\GG_{\tau}^s = \mathcal{S}_{p^{rmn}}$ if $s$ is odd.
\end{itemize}
\end{theorem}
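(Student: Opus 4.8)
The plan is to combine two facts already in hand: $\GG_\tau^s$ is a normal subgroup of $\GG_\tau$ (Lemma~\ref{NormalSubGrp}), and $\alt_N$ is simple once $N\geq 5$ (Theorem~\ref{simple}), where I write $N=p^{rmn}$ for the size of the state space $M_{m,n}(\GF(p^r))$. I will assume $N\geq 5$; this is automatic in the situations where the hypotheses $\GG_\tau=\alt_N$ or $\GG_\tau=\sym_N$ are actually established (e.g.\ under Theorem~\ref{mainthm}, where $r>4$ forces $N\geq 2^{5}$).

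First I would record a parity fact about the round functions. Since $T[k]=\sigma[k]\circ\rho\circ\pi\circ\lambda$ and, by Lemma~\ref{ARKlemma}, $\sigma[k]$ is an even permutation for every key $k$ (always when $p>2$; when $p=2$, because $N\geq 4$ forces $rmn\geq 2$), the parity of $T[k]$ equals that of $\rho\circ\pi\circ\lambda$ and hence does not depend on $k$. Consequently, if $\GG_\tau=\sym_N$ then the round functions cannot all be even (otherwise $\GG_\tau\leq\alt_N$), so they are all odd, and therefore every product $T[k_s]\circ\cdots\circ T[k_1]$ of exactly $s$ round functions has sign $(-1)^s$ under the sign homomorphism $\mathrm{sgn}\colon\sym_N\to\{\pm1\}$. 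Next I would verify $\GG_\tau^s\neq\{\ID\}$: if it were trivial, every product of $s$ round functions would be the identity, so applying this to $T[k_0]^{s}$ and to $T[k_1]\circ T[k_0]^{s-1}$ would force $T[k_1]=T[k_0]$ for all $k_0,k_1$, making $\GG_\tau$ cyclic -- impossible for $\alt_N$ ($N\geq 4$) or $\sym_N$ ($N\geq 3$).

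With these in place, part~(a) is immediate: $\GG_\tau^s$ is a non-trivial normal subgroup of the simple group $\GG_\tau=\alt_N$, hence $\GG_\tau^s=\alt_N$. For part~(b), with $\GG_\tau=\sym_N$ the only normal subgroups are $\{\ID\}$, $\alt_N$ and $\sym_N$ (standard: a normal subgroup meets $\alt_N$ in a normal subgroup of the simple group $\alt_N$, and $Z(\sym_N)=\{\ID\}$ excludes a normal subgroup of order $2$), so the non-triviality step gives $\alt_N\leq\GG_\tau^s\leq\sym_N$. Since $\GG_\tau^s$ is generated by products of $s$ round functions, each of sign $(-1)^s$, the image of $\GG_\tau^s$ under $\mathrm{sgn}$ is trivial when $s$ is even and is all of $\{\pm1\}$ when $s$ is odd; combined with $\alt_N\leq\GG_\tau^s$ this forces $\GG_\tau^s=\alt_N$ for $s$ even and $\GG_\tau^s=\sym_N$ for $s$ odd.

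I do not expect a real obstacle: the argument is essentially a normal-subgroup count once the parity of the round functions is pinned down. The steps that need genuine care are the parity bookkeeping for $\sigma[k]$ (so that ``all round functions have the same parity'' is legitimate, including the $p=2$ boundary cases), the non-triviality of $\GG_\tau^s$, and the clean citation of the normal-subgroup lattice of $\sym_N$.
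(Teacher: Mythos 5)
Your proof is correct and follows essentially the same route as the paper: normality of $\GG_{\tau}^s$ in $\GG_{\tau}$ (Lemma~\ref{NormalSubGrp}), simplicity of the alternating group, and a parity count on products of $s$ round functions. The only difference is that you make explicit some points the paper leaves implicit --- the key-independence of the parity of $T[k]$ via the evenness of $\sigma[k]$, the non-triviality of $\GG_{\tau}^s$, and the normal-subgroup lattice of $\sym_N$ --- which is a welcome tightening rather than a different argument.
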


\begin{proof}
Part (a) follows immediately from Lemma \ref{NormalSubGrp} and Theorem \ref {simple}.  
To show Part (b) suppose that $\GG_{\tau} = \mathcal{S}_{p^{rmn}}$.  If $s$ is even, then every element of $\GG_{\tau}^s$ must be an even permutation.  Hence $\GG_{\tau}^s = \mathcal{A}_{p^{rmn}}$ by Lemma \ref{NormalSubGrp}.  If $s$ is odd, then $\GG_{\tau}^s$ must contain an odd permutation.   Hence $\GG_{\tau}^s = \mathcal{S}_{p^{rmn}}$, again by Lemma \ref{NormalSubGrp}. 
\end{proof}

\section {Conclusion}  

In this paper we  provided conditions for which the round functions of a Rijndael-like block cipher deployed over a finite field $\GF(p^r)$ ($p>2$) do not constitute a group under functional composition - Theorem \ref{fullpermutations}. We also provided conditions for which the round functions of a Rijndael-like block cipher  over a finite field $\GF(p^r)$ ($p\geq 2$) generate  either the alternating group or the symmetric group on the message space - Theorem \ref{groupsgenth}.

\end{document}